\newcounter{mtheorem}
\newtheorem{theorem}{Theorem}[section]
\newtheorem{lemma}[theorem]{Lemma}
\newtheorem{prop}[theorem]{Proposition}
\newtheorem{corollary}[theorem]{Corollary}
\theoremstyle{definition}
\newtheorem{definition}[theorem]{Definition}
\newtheorem{example}[theorem]{Example}
\theoremstyle{remark}
\newtheorem{remark}[theorem]{Remark}
\numberwithin{equation}{section}
\newcommand{\C}{\mathbb{C}}
\newcommand{\R}{\mathbb{R}}
\newcommand{\Z}{\mathbb{Z}}
\newcommand{\N}{\mathbb{N}}
\newcommand{\tnabla}{{\widetilde{\nabla}}}
\newcommand{\tg}{{\tilde{g}}}
\newcommand{\bt}{\boldsymbol{t}}
\title[Desingularizing isolated conical singularities]{Desingularizing
isolated
conical singularities: uniform estimates via weighted Sobolev spaces}
\author[T.~Pacini]{Tommaso~Pacini}
\address{Scuola Normale Superiore, Pisa} \email{tommaso.pacini@sns.it}
\subjclass[2000]{53C21, 58Dxx, 58J05}
\date{\today}
\begin{document}

\begin{abstract}
We define a very general ``parametric connect sum'' construction which can be
used to eliminate isolated conical singularities of Riemannian manifolds.
We then show that various important
analytic and elliptic estimates, formulated in terms of weighted Sobolev spaces,
can be obtained independently of the parameters used in the construction.
Specifically, we prove uniform estimates related to (i) Sobolev Embedding
Theorems, (ii) the invertibility of the Laplace operator and (iii)
Poincar\'{e} and Gagliardo-Nirenberg-Sobolev type inequalities.  

Our main tools are the well-known theories of weighted Sobolev spaces and elliptic operators on ``conifolds''. We  provide an overview of both, together with an extension of the former to general Riemannian manifolds. 

For a geometric application of our results we refer the reader to our paper \cite{pacini:slgluing} concerning desingularizations of special Lagrangian
conifolds in $\C^m$.
\end{abstract}

\maketitle
\tableofcontents


\section{Introduction}\label{s:intro}

It is a common problem in Differential Geometry to produce examples of
(possibly immersed) Riemannian manifolds $(L,g)$ satisfying a given geometric
constraint, usually a nonlinear PDE, on the metric (Einstein, constant scalar curvature,
\textit{etc.}) or on the immersion (constant mean curvature, minimal,
\textit{etc.}). If $L$ (or the immersion) happens to be singular, one then faces
the problem of ``desingularizing'' it to produce a new, smooth,
Riemannian manifold satisfying the same constraint. Often, one actually hopes to
produce a family $(L_t,g_t)$ of manifolds satisfying the constraint and which
converges in some sense to $(L,g)$ as $t\rightarrow 0$.
One typical way to solve this problem is via ``gluing''. We outline this construction as follows, focusing for simplicity on the situation where $L$ has only isolated point singularities and the
constraint is on the metric.

\ 

\textbf{Step 1:} For each singular point $x\in L$, we look for an explicit
smooth ``local model'': \textit{i.e.}, a manifold $(\hat{L},\hat{g})$ which
satisfies a related, scale-invariant, constraint and which, outside of some compact region, is
topologically and metrically similar to an annulus $B(x,\epsilon_1)\setminus
B(x,\epsilon_2)$ in $L$, centered in the singularity. We can then glue $\hat{L}$
onto the manifold $L\setminus B(x,\epsilon_2)$, using the ``neck region''
$B(x,\epsilon_1)\setminus B(x,\epsilon_2)$ to interpolate between the two
metrics. The fact that the neck region is ``small'' is usually not a problem: one can simply
rescale $\hat{g}$ to $t^2\hat{g}$ so that now $(\hat{L},t^2\hat{g})$ is of
similar size. The resulting manifold, which we denote
$(\hat{L}\#L, \hat{g}\#g)$, satisfies the constraints outside of the neck region
simply by construction. If the interpolation is done carefully we also get very good control over what happens on the neck. We think of $(\hat{L}\#L, \hat{g}\#g)$ as an ``approximate solution'' to the gluing problem. Rescaling also gives a way to build families: the idea is
to glue $(\hat{L},t^2\hat{g})$ into $B(x,\epsilon_1)\setminus
B(x,t\epsilon_2)$, producing a family $(L_t,g_t)$; intuitively, as $t\rightarrow
0$ the compact region in $\hat{L}$ collapses to the singular point $x$ and $L_t$
converges to $L$.

\ 

\textbf{Step 2:} We now need to perturb each $(L_t, g_t)$ so
that the resulting family satisfies the constraint globally. Thanks to a
linearization process, the perturbation process often boils down to studying a
linear elliptic system on $g_t$. One of the main problems is to verify that this
system satisfies estimates which are uniform in $t$. This is the key to
obtaining the desired perturbation for all sufficiently small $t$. Roughly
speaking, there is often a delicate balance to be found as $t\rightarrow 0$:
on the one hand, if $L_t$ was built properly, as $t\rightarrow 0$ it will get
closer to solving the constraint; on the other hand, it becomes more
singular. Uniform estimates are important in proving that this balance can be
reached. 

\ 

The geometric problem defines the differential operator to be studied. However, this operator is often fairly intrinsic, and can be defined independently of the geometric specifics. The necessary estimates may likewise be of a much more general nature. Filtering out the geometric ``super-structure'' and concentrating on the analysis of the appropriate category of abstract Riemannian manifolds will then enhance the understanding of the problem, leading to improved results and clarity. The first goal of this paper is thus to set up an abstract framework for dealing with gluing constructions and the corresponding uniform estimates. Here, ``abstract'' means: independent of any specific geometric problem. We focus on gluing constructions concerning Riemannian manifolds with isolated
conical singularities. These are perhaps the simplest singularities possible,
but in the gluing literature they often appear as an interesting and
important case. Our framework involves two steps, parallel to those outlined above. 

\ 

\textbf{Step A:} In Section \ref{s:sums_sobolev} we define a general \textit{connect sum} construction between Riemannian manifolds, extrapolating from standard desingularization procedures. 

\ 

\textbf{Step B:} We show how to produce uniform estimates on these connect sum manifolds, by presenting a detailed analysis of three important problems:
(i) Sobolev Embedding Theorems, (ii) invertibility of the
Laplace operator, (iii) Poincar\'{e} and
Gagliardo-Nirenberg-Sobolev type inequalities. The main results are Theorems
\ref{thm:normstequivalent},
\ref{thm:sum_injective}, \ref{thm:cpt_sum_injective}, \ref{thm:d_invertible} and Corollary \ref{cor:improved_sob}.

\ 

Our Step A is actually
much more general than Step 1, as described above: it is specifically
designed to deal with both compact and non-compact manifolds and it allows us to replace
the given singularity not only with smooth compact regions but also with
non-compact ``asymptotically conical ends'' or even with new singular regions.
It also allows for
different ``neck-sizes'' around each singularity. In this sense it offers a very
broad and flexible framework to work with. 

The range of possible estimates covered by our framework is clearly much wider than the set of Problems (i)-(iii) listed in Step B. Indeed, the underlying, well-known, theory of elliptic operators on conifolds is extremely general. Within this paper, this choice is to be intended as fairly arbitrary: amoungst the many possible, we choose 3 estimates of general interest but differing one from the other in flavour: Problem (i) is of a mostly
local nature, Problems (ii) and (iii) are global. 
In reality, however, our choice of Problems (i)-(iii) is  based on the very specific geometric problems we happen to be interested in. The second goal of this paper is thus to lay down the analytic foundations for our papers \cite{pacini:sldefs}, \cite{pacini:slgluing} concerning deformations and desingularizations of submanifolds whose
immersion map satisfies the \textit{special Lagrangian} constraint. The starting point for this work was a collection of gluing results concerning special Lagrangian submanifolds due to Arezzo-Pacard \cite{arezzopacard}, Butscher \cite{butscher}, Lee \cite{lee} and Joyce \cite{joyce:III}, \cite{joyce:IV}, and parallel results concerning \textit{coassociative} submanifolds due to Lotay \cite{lotay}. It slowly became apparent, thanks also to many conversations with some of these authors, that several parts of these papers could be simplified, improved or generalized: related work is currently still in progress. In particular, building approximate solutions and setting up the perturbation problem requires making several choices which then influence the analysis rather drastically. A third goal of the paper is thus to present a set of choices which leads to very clean, simple and general results. 
One such choice concerns the parametrization of the approximate solutions: parametrizing the necks so that they depend explicitly on the parameter $t$ is one ingredient in obtaining uniform estimates. A second ingredient is the consistent use, even when dealing with compact manifolds, of weighted rather than standard Sobolev spaces. 
Although such choices may seem obvious to some members of the ``gluing community'', it still seems useful to emphasize this point. 

For expository purposes we found it useful to split the paper into three separate parts. Part I is devoted to weighted Sobolev spaces and the corresponding Sobolev Embedding Theorems. The main example we are interested in is the case of ``conifolds''; in this special case the Sobolev Embedding Theorems, cf. Corollary \ref{cor:embedding}, are well-known. However, Problem (i) requires keeping close track of how the corresponding Sobolev constants depend on the conifolds
and on the other data used in the connect sum construction. It is thus useful to step back and investigate exactly which properties of Sobolev spaces are crucial to the validity of Embedding Theorems. In the standard, \textit{i.e.} non-weighted, case, the book by Hebey \cite{hebey} provides an excellent introduction to this problem. Given the lack of an analogous reference for weighted Sobolev spaces, we devote a fair amount of attention to their definition and
properties. Our main result in Part I is Theorem
\ref{thm:weighted_ok}, which proves the validity of the Sobolev Embedding Theorems under fairly general hypotheses on the ``scale'' and ``weight'' functions with which we define these spaces. 

Part II is devoted to the Fredholm theory of
elliptic operators on conifolds. This theory
is well-known but, for the reader's convenience, we review it (together with its
asymptotically cylindrical counterpart) in Sections \ref{s:acyl_analysis} and
\ref{s:accs_analysis}. Sections \ref{s:weightcrossing} and \ref{s:accs_harmonic}
contain instead some useful consequences of the Fredholm theory.

Part III contains the main results of this paper, corresponding to Steps A and B, above: the definition of ``conifold connect sums'' and the uniform estimates, Problems (i)-(iii).

We conclude with one last comment. Depending on the details, the connect sum construction can have two outcomes: compact or non-compact manifolds. In the context of weighted spaces, Problem (i) does not notice the difference. Problems (ii) and (iii) require instead that
the kernels of the operators in question vanish. On non-compact manifolds this
can be achieved very simply, via an a-priori choice of weights: roughly
speaking, we require that there exist non-compact ``ends'', then put weights on them which kill the kernel. This topological assumption is perfectly compatible
with the geometric applications described in \cite{pacini:slgluing}. On compact manifolds it is instead necessary to work transversally to the kernel; uniform estimates depend on allowing the subspace itself to depend on the parameter $t$. We refer to Section \ref{s:sums_laplace} for details.


\ 

{\bf Acknowledgments.} I would like to thank D. Joyce for many useful
suggestions and discussions concerning the material of this paper. I also thank
M. Haskins and J. Lotay for several conversations. Part of this work was carried out while I
was a Marie Curie EIF Fellow at the University of Oxford. It has also been supported by a Marie Curie ERG grant at the Scuola Normale Superiore in Pisa.


\section{Preliminaries}\label{s:prelim}
Let $(L,g)$ be an oriented $m$-dimensional Riemannian manifold. We can identify
its tangent and cotangent bundles via the maps
\begin{equation}\label{eq:sharp}
T_xL\rightarrow T_x^*L,\ \ v\mapsto v^{\#}:=g(v,\cdot), \ \ \mbox{with inverse }
T_x^*L\rightarrow T_xL,\ \ \alpha\mapsto\alpha^\flat.
\end{equation}
There are induced isomorphisms on all higher-order tensor bundles over $L$. In
particular the metric tensor $g$, as a section of $(T^*L)^2$, corresponds to a
tensor $g^\flat$, section of $(TL)^2$. This tensor defines a natural metric on
$T^*L$ with respect to which the map of Equation \ref{eq:sharp} is an isometry.
In local coordinates, if $g=g_{ij}dx^i\otimes dx^j$ then
$g^\flat=g^{ij}\partial_i\otimes \partial_j$, where $(g^{ij})$ denotes the
inverse matrix of $(g_{ij})$. 

Given any $x\in L$ we denote by $i_x(g)$ the \textit{injectivity radius at $x$},
\textit{i.e.} the radius of the largest ball in $T_xL$ on which the exponential
map is a diffeomorphism. We then define the \textit{injectivity radius of $L$}
to be the number $i(g):=\mbox{inf}_{x\in L} i_x(g)$. We denote by $Ric(g)$ the
\textit{Ricci curvature tensor} of $L$: for each $x\in L$, this gives an element
$Ric_x(g)\in T_x^*L\otimes T_x^*L$.

Let $E$ be a vector bundle over $L$. We denote by $C^\infty(E)$ (respectively,
$C^\infty_c(E)$) the corresponding space of smooth sections (respectively, with
compact support). If $E$ is a metric bundle we can define the notion of a
\textit{metric connection} on $E$: namely, a connection $\nabla$ satisfying
\begin{equation*}
\nabla(\sigma,\tau)=(\nabla\sigma,\tau)+(\sigma,\nabla\tau),
\end{equation*}
where $(\cdot,\cdot)$ is the appropriate metric. We then say that $(E,\nabla)$
is a \textit{metric pair}.

Recall that coupling the Levi-Civita connection on $TL$ with a given connection
on $E$ produces induced connections on all tensor products of these bundles and
of their duals. The induced connections depend linearly on the initial
connections. Our notation will usually not distinguish between the initial
connections and the induced connections: this is apparent when we write, for
example, $\nabla^2\sigma$ (short for $\nabla\nabla\sigma$). Recall also that the
difference between two connections $\nabla$, $\hat{\nabla}$ defines a tensor
$A:=\nabla-\hat{\nabla}$. For example, if the connections are on $E$ then $A$ is
a
tensor in $T^*L\otimes E^*\otimes E$. Once again, we will not distinguish
between this $A$ and the $A$ defined by any induced connections.

Let $E$, $F$ be vector bundles over $L$. Let $P:C^\infty(E)\rightarrow
C^\infty(F)$ be a linear differential operator with smooth coefficients, of
order $n$. We can then write $P=\sum_{i=0}^n A_i\cdot\nabla^i$, where $A_i$ is a
global section of $(TL)^i\otimes E^*\otimes F$ and $\cdot$ denotes an
appropriate contraction. Notice that since $P$ is a local operator it is
completely defined by its behaviour on compactly-supported sections.

\begin{remark} \label{rem:A}
Assume $P=\sum_{i=0}^n A_i\cdot\nabla^i$. Choose a second connection
$\hat{\nabla}$ on $E$ and set $A:=\nabla-\hat{\nabla}$. Substituting
$\nabla=\nabla-\hat{\nabla}+\hat{\nabla}=A+\hat{\nabla}$ allows us to write $P$
in terms of $\hat{\nabla}$. Notice that the new coefficient tensors $\hat{A}_i$
will depend on $A$ and on its derivatives $\hat{\nabla}^k A$.
\end{remark}

Now assume $E$ and $F$ are metric bundles. Then $P$ admits a \textit{formal
adjoint} $P^*:C^\infty(F)\rightarrow C^\infty(E)$, uniquely defined by imposing
\begin{equation}
\int_L(P\sigma,\tau)_F \,\mbox{vol}_g=\int_L(\sigma,P^*\tau)_E \,\mbox{vol}_g, \
\ \forall \sigma\in C^\infty_c(E), \ \tau\in C^\infty_c(F).
\end{equation} 
$P^*$ is also a linear differential operator, of the same order as $P$. 

\begin{example} \label{e:nablalaplace}
The operator $\nabla:C^\infty(E)\rightarrow C^\infty(T^*L\otimes E)$ has a
formal adjoint $\nabla^*: C^\infty(T^*L\otimes E)\rightarrow C^\infty(E)$. Given
$P=\sum_{i=0}^n A_i\cdot\nabla^i$, we can write $P^*$ in terms of $\nabla^*$.
For example, choose a smooth vector field $X$ on $L$ and consider the operator
$P:=\nabla_X=X\cdot\nabla:C^\infty(E)\rightarrow C^\infty(E)$. Then
$(\nabla_X)^*\sigma=\nabla^*(X^{\#}\otimes\sigma)$. 
\end{example}

The \textit{$\nabla$-Laplace} operator on $E$ is defined as
$\Delta:=\nabla^*\nabla:C^\infty(E)\rightarrow C^\infty(E)$. When $E$ is the
trivial $\R$-bundle over $L$ and we use the Levi-Civita connection, this
coincides with the standard positive Laplace operator acting on functions
\begin{equation}\label{eq:laplace}
\Delta_g:=-\mbox{tr}_g(\nabla^2)=-g^\flat\cdot\nabla^2:C^\infty(L)\rightarrow
C^\infty(L).
\end{equation}
Furthermore $\nabla=d$ and $\nabla^*=d^*$ so this Laplacian also coincides with
the Hodge Laplacian $d^*d$.
On differential $k$-forms the Levi-Civita $\nabla$-Laplacian and the Hodge
Laplacian coincide only up to curvature terms.

\ 

To conclude, let us recall a few elements of Functional Analysis. We now let $E$
denote a Banach space. Then $E^*$ denotes its dual space and
$\langle\cdot,\cdot\rangle$ denotes the duality map 
$E^*\times E\rightarrow\R$.

Let $P:E\rightarrow F$ be a continuous linear map between Banach spaces. Recall
that the \textit{norm} of $P$ is defined as
$\|P\|:=\mbox{sup}_{|e|=1}|P(e)|=\mbox{sup}_{e\neq 0} (|P(e)|/|e|)$. This
implies that, $\forall e\neq 0$, $|P(e)|\leq\|P\|\cdot |e|$. If $P$ is injective
and surjective then it follows from the Open Mapping Theorem that its inverse
$P^{-1}$ is also continuous. In this case $\mbox{inf}_{|e|=1}|P(e)|>0$ and we
can calculate the norm of $P^{-1}$ as follows:
\begin{equation}\label{eq:normofinverse}
\|P^{-1}\|=\mbox{sup}_{f\neq 0}\frac{|P^{-1}(f)|}{|f|}=\mbox{sup}_{e\neq
0}\frac{|e|}{|P(e)|}=\mbox{sup}_{|e|=1}\frac{1}{|P(e)|}=\frac{1}{\mbox{inf}_{
|e|=1}|P(e)|}.
\end{equation}
Recall that, given any subspace $Z\leq F$, the \textit{annihilator} of $Z$ is
defined as
$$\mbox{Ann}(Z):=\{\phi\in F^*:\langle\phi,z\rangle=0, \ \forall z\in Z\}.$$ 
Notice that $\mbox{Ann}(\overline{Z})=\mbox{Ann}(Z)$. Let $P^*:F^*\rightarrow
E^*$ be the dual map, defined by $\langle
P^*(\phi),e\rangle:=\langle\phi,P(e)\rangle$. It is simple to check that
$\mbox{Ann(Im$(P)$)}=\mbox{Ker}(P^*)$. 

Recall that the \textit{cokernel} of $P$ is defined to be the quotient space
$\mbox{Coker}(P):=F/\mbox{Im}(P)$. Assume the image $\mbox{Im}(P)$ of $P$ is a
closed subspace of $F$, so that $\mbox{Coker}(P)$ has an induced Banach space
structure. The projection $\pi:F\rightarrow \mbox{Coker}(P)$ is surjective so
its dual map $\pi^*:(\mbox{Coker}(P))^*\rightarrow F^*$ is injective. The image
of $\pi^*$ coincides with the space $\mbox{Ann(Im$(P)$)}$ so $\pi^*$ defines an
isomorphism between $(\mbox{Coker}(P))^*$ and $\mbox{Ann(Im$(P)$)}$. We conclude
that there exists a natural isomorphism
$(\mbox{Coker}(P))^*\simeq\mbox{Ker}(P^*)$.

\begin{remark} \label{rem:characterizations}
It is clear that $\mbox{Ker}(P^*)$ can be characterized as follows:
$$\phi\in\mbox{Ker}(P^*)\Leftrightarrow\langle\phi,P(e)\rangle=0,\ \ \forall
e\in E.$$
On the other hand, the Hahn-Banach Theorem shows that $f\in \overline{Z}$ iff
$\langle\phi,f\rangle=0$, $\forall \phi \in \mbox{Ann}(Z)$. Applying this to
$Z:=\mbox{Im}(P)$, we find the following characterization of
$\overline{\mbox{Im}(P)}$: 
$$f\in\overline{\mbox{Im}(P)}\Leftrightarrow \langle\phi,f\rangle=0,\ \ \forall
\phi\in\mbox{Ker}(P^*).$$
\end{remark}

We say that $P$ is \textit{Fredholm} if its image $\mbox{Im}(P)$ is closed in
$F$ and both $\mbox{Ker}(P)$ and $\mbox{Coker}(P)$ are finite-dimensional. We
then define the \textit{index} of $P$ to be 
$$i(P):=\mbox{dim(Ker$(P)$)}-\mbox{dim(Coker$(P)$)}=\mbox{dim(Ker$(P)$)}-\mbox{
dim(Ker$(P^*)$)}.$$

\ 

\textbf{Important remarks}: Throughout this paper we will often encounter chains
of inequalities of the form
\begin{equation*}
|e_0|\leq C_1 |e_1|\leq C_2 |e_2|\leq\dots 
\end{equation*}
The constants $C_i$ will often depend on factors that are irrelevant within the
given context. In this case we will sometimes simplify such expressions by
omitting the subscripts of the constants $C_i$, \textit{i.e.} by using a single
constant $C$. 

We assume all manifolds are oriented. In Part 2 of the paper we will work under
the assumption $m\geq 3$.


\part{Sobolev Embedding Theorems}

The goal of this part is to provide a self-contained overview of certain aspects of the theory of weighted Sobolev spaces on Riemannian manifolds. Aside from the special case of ``conifolds'', discussed in Section \ref{s:manifolds} and which is well-known, the point of view we present here applies to manifolds in general and we would not know where to find it in the literature. In Sections \ref{s:scaled} and \ref{s:weighted} we find it useful to separate the ``scaling factor'' $\rho$ from the ``weight'' $w$: distinguishing them in this way appears not to be a standard choice in the literature, but we find it useful so to emphasize their different roles in the theory.

\section{Review of the theory of standard Sobolev spaces}\label{s:std}

We now introduce and discuss Sobolev spaces on manifolds. A good reference,
which at times we follow closely, is Hebey \cite{hebey}.

Let $(E,\nabla)$ be a metric pair over $(L,g)$. The \textit{standard Sobolev
spaces}
are defined by
\begin{equation}\label{eq:std_sob}
W^p_k(E):=\mbox{Banach space completion of the space }\{\sigma\in
C^\infty(E):\|\sigma\|_{W^p_k}<\infty\},
\end{equation}
where $p\in [1,\infty)$, $k\geq 0$ and we use the norm
$\|\sigma\|_{W^p_k}:=\left(\Sigma_{j=0}^k\int_L|\nabla^j\sigma|^p\,\mbox{vol}
_g\right)^ { 1/p}$. We will sometimes use $L^p$ to denote the space $W^p_0$.

\begin{remark} 
At times we will want to emphasize the metric $g$ rather than the specific
Sobolev spaces. In these cases we will use the notation $\|\cdot\|_g$.
\end{remark}

It is important to find conditions ensuring that two metrics $g$, $\hat{g}$ on
$L$ (corresponding to Levi-Civita connections $\nabla$, $\hat{\nabla}$), define
\textit{equivalent} Sobolev norms, \textit{i.e.} such that there exists $C>0$
with $(1/C)\|\cdot\|_g\leq \|\cdot\|_{\hat{g}}\leq C\|\cdot\|_g$. In
this case the corresponding two completions, \textit{i.e.} the two spaces
$W^p_k$, coincide.

\begin{definition}\label{def:equivalentmetrics}
We say that two Riemannian metrics $g$, $\hat{g}$ on a manifold $L$ are
\textit{equivalent} if they satisfy the following assumptions:
\begin{description}
\item[A1] There exists $C_0>0$ such that 
\begin{equation*}
(1/C_0)g\leq \hat{g}\leq C_0 g.
\end{equation*}
\item[A2] For all $j\geq 1$ there exists $C_j>0$
such that
\begin{equation*}
|\nabla^j\hat{g}|_g\leq C_j.
\end{equation*}
\end{description}
\end{definition}

\begin{remark}\label{rem:equivalence}
It may be useful to emphasize that the conditions of Definition
\ref{def:equivalentmetrics} are symmetric in $g$ and
$\hat{g}$. Assumption A1 is obviously
symmetric. Assumption A2 is also symmetric. For
$j=1$, for example, this follows from the following calculation which uses the
fact that the connections are metric:
\begin{equation}\label{eq:Abounded}
|\nabla\hat{g}|_g=|\nabla\hat{g}-\hat{\nabla}\hat{g}|_g=|A(\hat{g})|_g\simeq
|A(g)|_{\hat{g}}=|\hat{\nabla} g|_{\hat{g}},
\end{equation} 
where $\simeq$ replaces multiplicative constants.
Notice that in Equation \ref{eq:Abounded} $A$ is the difference of the induced
connections on $T^*L\otimes T^*L$. This tensor depends linearly on the tensor
defined as the difference of the connections on $TL$. It is simple to see that
these two tensors have equivalent norms so that Assumption A2 provides a
pointwise bound on the norms of either one. From here we easily obtain bounds on
the norms of the tensor defined as the difference of the induced connections on
any tensor product of $TL$ and $T^*L$. Similar statements hold for
bounds on the derivatives of $A$.

Assumptions 1 and 2 can be unified as follows. Assume that, for
all $j\geq 0$, there exists $C_j>0$ such that
$$|\nabla^j(\hat{g}-g)|_g\leq C_j.$$
As long as $C_0$ is sufficiently small, for $j=0$ this condition implies
Assumption 1. Since $\nabla^j g=0$, it is clear that for $j>0$ it is equivalent
to Assumption 2.
\end{remark}

\begin{lemma}\label{l:equivalentstdnorms}
Assume $g$, $\hat{g}$ are equivalent.
Then the Sobolev norms defined by $g$ and $\hat{g}$ are equivalent.
\end{lemma}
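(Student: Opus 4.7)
The plan is to reduce the statement to two ingredients coming directly from the equivalence assumptions: (i) pointwise equivalence of the tensor norms $|\cdot|_g$ and $|\cdot|_{\hat{g}}$ (together with the volume forms), and (ii) a way to compare $\nabla^j \sigma$ with $\hat{\nabla}^j \sigma$. Ingredient (i) is a purely algebraic consequence of Assumption A1: from $(1/C_0)g \leq \hat{g} \leq C_0 g$ one obtains analogous bounds between $g^\flat$ and $\hat{g}^\flat$, and by tensoring these together one gets a constant $C>0$ such that $(1/C)|T|_g \leq |T|_{\hat{g}} \leq C |T|_g$ for every tensor $T$ on $L$, as well as $(1/C)\mbox{vol}_g \leq \mbox{vol}_{\hat{g}} \leq C\,\mbox{vol}_g$ (the latter via determinants).

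For ingredient (ii), I would set $A := \nabla - \hat{\nabla}$, viewed as a section of an appropriate tensor bundle. As explained in Remark \ref{rem:equivalence}, Assumption A2 (combined with A1) supplies pointwise bounds on $|\nabla^k A|_g$ for every $k \geq 0$. Following Remark \ref{rem:A}, one writes $\hat{\nabla} = \nabla - A$ and substitutes iteratively. By induction on $j$, this produces an expansion of the form
\begin{equation*}
\hat{\nabla}^j \sigma = \nabla^j \sigma + \sum_{i=0}^{j-1} B_{j,i} \cdot \nabla^i \sigma,
\end{equation*}
where each $B_{j,i}$ is a universal polynomial expression in $A$ and its iterated $\nabla$-derivatives $\nabla^k A$ with $k \leq j-1$. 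By A2 all such tensors are pointwise bounded, hence there exists $C>0$ (depending on $j$) with $|B_{j,i}|_g \leq C$. Combining with ingredient (i), this yields
\begin{equation*}
|\hat{\nabla}^j \sigma|_{\hat{g}} \leq C \sum_{i=0}^{j} |\nabla^i \sigma|_g,
\end{equation*}
and by symmetry (Remark \ref{rem:equivalence}) the reverse inequality holds as well.

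Finally, raising to the $p$-th power, integrating against the (equivalent) volume forms, and summing over $j = 0, \dots, k$ gives
\begin{equation*}
(1/C)\|\sigma\|_{W^p_k, g} \leq \|\sigma\|_{W^p_k, \hat{g}} \leq C \|\sigma\|_{W^p_k, g}
\end{equation*}
for every smooth $\sigma$, so the two completions in \eqref{eq:std_sob} coincide as Banach spaces with equivalent norms.

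The main technical obstacle is the inductive expansion of $\hat{\nabla}^j\sigma$ in ingredient (ii): one must be careful that each time the identity $\hat{\nabla} = \nabla - A$ is applied to a tensor built from $\sigma$ and previous derivatives, the resulting correction terms involve only $\nabla^i\sigma$ with $i \leq j-1$ and factors of $\nabla^k A$ with $k \leq j - 1$, all of which are controlled by A2. Once this bookkeeping is done, the rest of the argument is routine integration.
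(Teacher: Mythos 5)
Your proposal is correct and follows essentially the same route as the paper: both arguments rest on the pointwise metric/volume equivalence from Assumption A1 and on expanding one connection as the other plus the difference tensor $A=\nabla-\hat{\nabla}$, whose iterated derivatives are bounded via Assumption A2 (Remark \ref{rem:equivalence}). The paper merely illustrates the expansion for $j=2$ where you carry out the induction in general, but the underlying idea and the required estimates are identical.
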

\begin{proof}
Consider the Sobolev spaces of functions on $L$. Recall that $\nabla u=du$.
This implies that the $W^p_1$ norms depend only pointwise on the metrics. In
this case Assumption A1 is sufficient to ensure equivalence. In general,
however, the $W^p_k$ norms use the induced connections on tensor bundles. For
example, assume $j=2$. Then
\begin{equation*}
|\nabla^2u|=|(A+\hat{\nabla})(A+\hat{\nabla})u|\leq |A^2u|+|A\cdot\hat{\nabla}
u|+|\hat{\nabla} (Au)|+|\hat{\nabla}^2u|,
\end{equation*}
where $A:=\nabla-\hat{\nabla}$ is the difference of the appropriate connections.
It is clearly sufficient to obtain pointwise bounds on $A$ and its derivative
$\hat{\nabla} A$. As mentioned in Remark \ref{rem:equivalence}, these
follow from Assumption A2. The same is true
for Sobolev spaces of sections of tensor bundles over $L$. 

Now consider the Sobolev spaces of sections of $E$. Since we are not changing
the connection on $E$, Assumption A1 ensures equivalence of the $W^p_1$ norms.
The equivalence of the $W^p_k$ norms is proved as above.
\end{proof}

For $p>1$ we define $p'$ via 
\begin{equation}\label{eq:p'}
\frac{1}{p}+\frac{1}{p'}=1, \ \ \mbox{\textit{i.e. }} p'=\frac{p}{p-1}.
\end{equation}
For $p\geq 1$ we define $p^*$ via
\begin{equation}\label{eq:p*}
\frac{1}{p^*}=\frac{1}{p}-\frac{1}{m},\ \ \mbox{\textit{i.e. }}
p^*=\frac{mp}{m-p}.
\end{equation}
It is simple to check that
\begin{equation}\label{eq:sobconstantsfundrelation}
\frac{1}{p^*}+\frac{1}{p'}=\frac{m-1}{m}.
\end{equation}
More generally, for $p\geq 1$ and $l=\{1,2,\dots\}$ we define $p^*_l$ via
\begin{equation}\label{eq:p*l}
\frac{1}{p^*_l}=\frac{1}{p}-\frac{l}{m},\ \ \mbox{\textit{i.e. }}
p^*_l=\frac{mp}{m-lp},
\end{equation}
so that $p^*=p^*_1$. Notice that $p^*_l$ is obtained by $l$ iterations of the
operation
\begin{equation*}
 p\mapsto p^*
\end{equation*}
and that $\frac{1}{p^*_l}<\frac{1}{p^*_{l-1}}<\frac{1}{p}$, so if $p^*_l>0$
(equivalently, $lp<m$) then $p^*_l>p^*_{l-1}>p$. In other words, under
appropriate conditions $p^*_l$ increases with $l$.

The Sobolev Embedding Theorems come in two basic forms, depending on the product
$lp$.
The \textit{Sobolev Embedding Theorems, Part I} concern the existence of
continuous embeddings of the form 
\begin{equation}\label{eq:std_partI}
 W^p_{k+l}(E)\hookrightarrow W^{p^*_l}_k(E)\ \ (\mbox{for }lp<m),
\end{equation}
\textit{i.e.} the existence of some constant $C>0$ such that, $\forall \sigma\in
W^p_{k+l}(E)$,
\begin{equation}\label{eq:std_partIbis}
 \|\sigma\|_{W^{p^*_l}_{k}(E)}\leq C\|\sigma\|_{W^p_{k+l}(E)}.
\end{equation}
A standard argument based on H\"older's inequality then shows that 
$ W^p_{k+l}(E)\hookrightarrow W^q_k(E)$, for all $q\in [p, p^*_l]$.
We call $C$ the \textit{Sobolev constant}. In words, bounds on the higher derivatives of $\sigma$ enhance the integrability
of $\sigma$. Otherwise said, one can sacrifice derivatives to improve
integrability; the more derivatives one sacrifices, the larger the integrability range $[p,p^*_l]$. 

The \textit{exceptional case} of Part I concerns the existence of continuous
embeddings of the form
\begin{equation}\label{eq:except_partI}
 W^p_{k+l}(E)\hookrightarrow W^{q}_k(E)\ \ (\mbox{for }lp=m),\ \ \forall q\in
[p,\infty).
\end{equation}
The \textit{Sobolev Embedding Theorems, Part II} concern the existence of
continuous embeddings of the form 
\begin{equation}\label{eq:std_partII}
 W^p_{k+l}(E)\hookrightarrow C^k(E)\ \ (\mbox{for }lp>m).
\end{equation}
Roughly speaking, this means that one can sacrifice derivatives to improve
regularity.

\ 

The validity of these theorems for a given manifold $(L,g)$ depends on its
Riemannian properties. It is a useful fact that the properties of $(E,\nabla)$
play no extra role: more precisely, if an Embedding Theorem holds for functions
on $L$ it then holds for sections of any metric pair $(E,\nabla)$. This is a
consequence of the following result.

\begin{lemma}[Kato's inequality]\label{l:kato} Let $(E,\nabla)$ be a metric
pair. Let $\sigma$ be a smooth section of $E$. Then, away from the zero set of
$\sigma$,
\begin{equation}\label{eq:kato}
|d|\sigma||\leq |\nabla\sigma|.
\end{equation}
\end{lemma}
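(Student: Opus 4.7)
The plan is to exploit the fact that the connection $\nabla$ is metric and combine this with a pointwise Cauchy--Schwarz inequality on $E$. Let $U\subset L$ denote the complement of the zero set of $\sigma$, so that the function $|\sigma|=\sqrt{(\sigma,\sigma)}$ is smooth on $U$.

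The key computation is to differentiate $|\sigma|^2$ in two different ways. On the one hand, since $|\sigma|^2$ is a smooth positive function on $U$, the chain rule gives
\begin{equation*}
d|\sigma|^2 = 2|\sigma|\,d|\sigma|.
\end{equation*}
On the other hand, applying an arbitrary tangent vector $X\in T_xL$ and using that $\nabla$ is a metric connection on $E$, we get
\begin{equation*}
X\bigl(|\sigma|^2\bigr) = X\bigl((\sigma,\sigma)\bigr) = 2(\nabla_X\sigma,\sigma).
\end{equation*}
Equating these two expressions and cancelling the factor of $2$ yields, for all $x\in U$ and $X\in T_xL$,
\begin{equation*}
|\sigma(x)|\cdot d|\sigma|(X) = (\nabla_X\sigma,\sigma).
\end{equation*}

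The second step is to apply the pointwise Cauchy--Schwarz inequality in the fiber $E_x$ to the right-hand side: $|(\nabla_X\sigma,\sigma)|\leq |\nabla_X\sigma|\cdot|\sigma(x)|\leq |\nabla\sigma(x)|\cdot|X|\cdot|\sigma(x)|$, where the last inequality uses the operator norm characterization of $|\nabla\sigma|$ as a section of $T^*L\otimes E$. Combining with the identity above, at points where $|\sigma(x)|\neq 0$ we may divide by $|\sigma(x)|$ to obtain
\begin{equation*}
\bigl|d|\sigma|(X)\bigr|\leq |\nabla\sigma(x)|\cdot|X|.
\end{equation*}
Taking the supremum over unit vectors $X\in T_xL$ and recalling the definition of the dual norm on $T_x^*L$ then gives the desired inequality $|d|\sigma||\leq|\nabla\sigma|$ on $U$.

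There is no real obstacle here; the only subtle point is keeping track of the two different inner products at play: the one on $E$ used to define $|\sigma|$ and to apply Cauchy--Schwarz, and the one on $T^*L\otimes E$ implicitly used to define $|\nabla\sigma|$. The fact that $d|\sigma|$ is not a priori defined where $\sigma$ vanishes explains the hypothesis of the lemma, and is exactly why the division step above is legitimate.
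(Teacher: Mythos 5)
Your proposal is correct and is essentially the paper's own argument, which compresses the same computation into the single line $2|\sigma||d|\sigma||=|d|\sigma|^2|=2(\nabla\sigma,\sigma)\leq 2|\nabla\sigma||\sigma|$ followed by division by $|\sigma|$. Your version merely spells out the evaluation on a tangent vector $X$ and the Cauchy--Schwarz step explicitly.
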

\begin{proof}
\begin{equation*}
2|\sigma||d|\sigma||=|d|\sigma|^2|=2(\nabla\sigma,\sigma)\leq
2|\nabla\sigma||\sigma|.
\end{equation*}
\end{proof}
The next result shows that if Part I holds in the simplest cases it then holds
in all cases. Likewise, the general case of Part II follows from combining the
simplest cases of Part II with the general case of Part I.

\begin{prop}\label{prop:complexfromsimple}
\ 

\begin{enumerate} 
\item Assume Part I, Equation \ref{eq:std_partI}, holds for all $p<m$ with $l=1$
and $k=0$. Then Part I holds for all $p$ and $l$ satisfying $lp<m$ and for all
$k\geq 0$.
\item Assume Part I, Equation \ref{eq:std_partI}, holds in all cases and that
the exceptional case, Equation \ref{eq:except_partI}, holds for $l=1$ and $k=0$.
Then the exceptional case holds for all $p$ and $l$ satisfying $lp=m$ and for
all $k\geq 0$.
\item Assume Part I, Equation \ref{eq:std_partI}, and the exceptional case,
Equation \ref{eq:except_partI}, hold in all cases and that Part II, Equation
\ref{eq:std_partII}, holds for all $p>m$ with $l=1$ and $k=0$. Then Part II
holds for all $p$ and $l$ satisfying $lp>m$ and for all $k\geq 0$.
\end{enumerate}
\end{prop}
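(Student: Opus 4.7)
The uniform engine behind all three parts is a two-stage bootstrap: a "$k$-bootstrap" that promotes a base embedding with $k=0$ to all $k\geq 0$ via Kato's inequality, and an "$l$-bootstrap" that composes embeddings to raise $l$. The $k$-bootstrap is the content of the remark on functions versus sections just before the statement: to obtain $W^p_{k+1}(E)\hookrightarrow W^q_k(E)$ from the function embedding $W^p_1\hookrightarrow W^q_0$, apply the base embedding to the nonnegative function $|\nabla^j\sigma|$ for each $j=0,\ldots,k$. Kato's inequality $|d|\nabla^j\sigma||\leq|\nabla^{j+1}\sigma|$ certifies $|\nabla^j\sigma|\in W^p_1$ whenever $\sigma\in W^p_{k+1}$, and summing over $j$ yields the desired $W^q_k$ bound on $\sigma$.

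For part (1), I would first promote $W^p_1\hookrightarrow L^{p^*}$ to $W^p_{k+1}(E)\hookrightarrow W^{p^*}_k(E)$ for every $k\geq 0$ and $p<m$ by the $k$-bootstrap, then iterate this in $l$ to obtain the telescope
\begin{equation*}
W^p_{k+l}\hookrightarrow W^{p^*_1}_{k+l-1}\hookrightarrow\cdots\hookrightarrow W^{p^*_l}_k.
\end{equation*}
A short check against (\ref{eq:p*l}) shows that the condition $p^*_{i-1}<m$ required at step $i$ is equivalent to $ip<m$, which holds for every $i\leq l$ precisely because $lp<m$.

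For part (2), given $\sigma\in W^p_{k+l}$ with $lp=m$, iterate the (now fully available) Part I exactly $l-1$ times to reach $W^{p^*_{l-1}}_{k+1}=W^m_{k+1}$, then $k$-bootstrap the base exceptional case $W^m_1\hookrightarrow L^q$ ($q\in[m,\infty)$) to conclude $\sigma\in W^q_k$ for every $q\in[m,\infty)$. The remaining range $q\in[p,m)$ follows by H\"older interpolation between $\sigma\in L^p$ (trivial from $\sigma\in W^p_{k+l}$) and $\sigma\in L^{q'}$ for arbitrarily large $q'>m$, applied termwise to $\nabla^j\sigma$ for $j=0,\ldots,k$.

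For part (3), set $i_0:=\max\{i\geq 0 : ip<m\}$; the hypothesis $lp>m$ forces $0\leq i_0\leq l-1$. Iterate Part I $i_0$ times to obtain $W^p_{k+l}\hookrightarrow W^{p^*_{i_0}}_{k+l-i_0}$. If $(i_0+1)p>m$, then $p^*_{i_0}>m$ and the $k$-bootstrap of the base Part II yields $W^{p^*_{i_0}}_{k+l-i_0}\hookrightarrow C^{k+l-i_0-1}\subset C^k$. Otherwise $(i_0+1)p=m$, which forces $l\geq i_0+2$; inserting one application of the (hypothesized) exceptional case passes to $W^q_{k+l-i_0-1}$ for some $q>m$, after which Part II applies as before. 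The only real work throughout is the arithmetic of (\ref{eq:p*l}) that keeps the exponents inside the regime where each invoked hypothesis is legal, and I expect this exponent bookkeeping, rather than any analytic subtlety, to be the main obstacle, since Kato's inequality, composition of continuous embeddings and H\"older interpolation are all entirely standard.
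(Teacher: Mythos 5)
Your proof is correct and follows essentially the same route as the paper's: Kato's inequality to pass from the base case $k=0$ to general $k$, composition of one-derivative embeddings to raise $l$, and a case split on whether an intermediate exponent $p^*_i$ lands on, above, or below $m$. You are in fact slightly more careful than the paper in part (2), where your H\"older interpolation explicitly covers the range $q\in[p,m)$ that the paper's phrase ``apply the exceptional case in its simplest form'' leaves implicit.
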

\begin{proof} 
As discussed above, it is sufficient to prove that the result holds for
functions: as a result of Kato's inequality it will then hold for arbitrary
metric pairs $(E,\nabla)$.

(1) Assume $l=1$. Given $u\in W^p_{k+1}$, Kato's inequality shows that
$|u|,\dots\, |\nabla^k u|\in W^p_1$. Applying Part I to each of these then shows
that $W^p_{k+1}\hookrightarrow W^{p^*}_k$. The general case follows from the
composition of the embeddings 
\begin{equation*}
W^p_{k+l}\hookrightarrow W^{p^*}_{k+l-1}\hookrightarrow
W^{p^*_2}_{k+l-2}\hookrightarrow\dots
\end{equation*}

(2) For $l=1$ we can prove $W^p_{k+1}\hookrightarrow W^q_k$ as in (1) above. Now
assume $lp=m$ for $l\geq 2$. Then Part I yields $W^p_l\hookrightarrow
W^{p^*_{l-1}}_1$. Since $p^*_{l-1}=m$ we can now apply the exceptional case in
its simplest form.

(3) Let us consider, for example, the case $l=2$ and $k=0$. We are then assuming
that $p>m/2$. Let us distinguish three subcases, as follows. Assume $p\in
(m/2,m)$. Then Part 1 implies that $W^p_2\hookrightarrow W^{p^*}_1$. Since
$p^*>m$ we can now use the embedding $W^{p^*}_1\hookrightarrow C^0$ to conclude.
Now assume $p=m$. Then $W^p_2\hookrightarrow W^q_1$ for any $q>m$ and we can
conclude as above. Finally, assume $p>m$. Then $W^p_2\hookrightarrow
W^p_1\hookrightarrow C^0$. The other cases are similar.
\end{proof}

\begin{corollary}
 Assume the Sobolev Embedding Theorems hold for $(L,g)$. Let $\hat{g}$ be a
second Riemannian metric on $L$ such that, for some $C_0>0$, $(1/C_0)g\leq
\hat{g}\leq C_0 g$. Then the Sobolev Embedding Theorems hold also for
$(L,\hat{g})$.  
\end{corollary}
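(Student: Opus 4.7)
The plan is to exploit Proposition \ref{prop:complexfromsimple} in order to reduce the problem to the ``simplest cases'' of the embedding theorems, where only the $W^p_1$--norm on functions is involved. The key point is that, although we are only assuming A1 (and not A2, so that Lemma \ref{l:equivalentstdnorms} does not apply), the $W^p_1$--norm on functions is defined via $|du|$ and $\mbox{vol}_g$, both of which depend only \emph{pointwise} on the metric. Hence A1 alone will suffice to conclude.

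First I would reduce to functions. By the argument used in the proof of Proposition \ref{prop:complexfromsimple} (Kato's inequality, Lemma \ref{l:kato}), it is enough to establish all three embedding theorems for real-valued functions: once this is done for $(L,\hat{g})$, Kato transfers them to arbitrary metric pairs $(E,\nabla)$ over $(L,\hat{g})$. Next, by Proposition \ref{prop:complexfromsimple}, it suffices to prove the following three base cases for $(L,\hat{g})$:
\begin{enumerate}
\item $W^p_1\hookrightarrow L^{p^*}$ for all $p<m$,
\item $W^p_1\hookrightarrow L^q$ for $p=m$ and any $q\in[p,\infty)$,
\item $W^p_1\hookrightarrow C^0$ for all $p>m$.
\end{enumerate}
Each of these is assumed to hold for $(L,g)$.

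Now I would check that, under Assumption A1 alone, the $W^p_1$--norm on functions is equivalent when computed with $g$ or with $\hat{g}$. For this, observe that for a smooth function $u$ one has $\nabla u=\hat{\nabla} u=du$, so the ``derivative'' part of the norm is $|du|_g$ versus $|du|_{\hat{g}}$. Assumption A1 gives $(1/C_0)g\leq\hat{g}\leq C_0 g$ on $TL$, from which one immediately derives (by taking inverses at each point) bounds of the same form with a different constant on $T^*L$; thus $|du|_g$ and $|du|_{\hat{g}}$ are pointwise equivalent. Likewise, $\mbox{vol}_{\hat{g}}=\sqrt{\det(g^{-1}\hat{g})}\,\mbox{vol}_g$ and A1 bounds the factor $\sqrt{\det(g^{-1}\hat{g})}$ above and below by positive constants. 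Combining these two facts shows that the $L^p$--norms, and hence the $W^p_1$--norms, of a function $u$ computed with $g$ and with $\hat{g}$ differ only by multiplicative constants. The same argument shows the $L^q$--norms are equivalent for any $q$, and of course the $C^0$--norm does not depend on the metric.

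The conclusion is then immediate: each of the three base embeddings for $(L,g)$ yields an inequality of the form $\|u\|_{L^{q}(g)}\leq C \|u\|_{W^p_1(g)}$ (or $\|u\|_{C^0}\leq C\|u\|_{W^p_1(g)}$), and substituting the equivalences above produces the analogous inequality for $(L,\hat{g})$, with a constant that differs only by powers of $C_0$. Proposition \ref{prop:complexfromsimple} then promotes these base cases to the full Sobolev Embedding Theorems on $(L,\hat{g})$. The only real point requiring care -- and the conceptual obstacle to avoid -- is resisting the temptation to invoke Lemma \ref{l:equivalentstdnorms} directly: doing so would require A2, which is \emph{not} in the hypothesis. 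The trick is precisely that the reduction via Proposition \ref{prop:complexfromsimple} confines us to first derivatives of functions, where A1 is enough.
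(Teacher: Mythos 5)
Your proposal is correct and follows essentially the same route as the paper: reduce via Proposition \ref{prop:complexfromsimple} to the $l=1$, $k=0$ cases, observe that these involve only $C^0$-information on the metric (since $\nabla u = du$ and the volume forms are pointwise comparable under A1), and conclude. The paper states this in three lines; your version merely spells out the pointwise equivalences explicitly.
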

\begin{proof}
 According to Proposition \ref{prop:complexfromsimple} it is sufficient to
verify the Sobolev Embedding Theorems in the case $l=1$ and $k=0$. These
involve only $C^0$-information on the metric. The conclusion is thus
straight-forward.
\end{proof}

\begin{remark} \label{rem:complexfromsimple}
Under a certain density condition, Proposition \ref{prop:complexfromsimple} can
be enhanced as follows.

Assume Part I, Equation \ref{eq:std_partI}, holds for $p=1$, $l=1$ and $k=0$,
\textit{i.e.} $W^1_1\hookrightarrow L^\frac{m}{m-1}$. Assume also that, for all
$p<m$, the space $C^\infty_c(L)$ is dense in $W^p_1$. Then Part I holds for all
$p<m$ with $l=1$ and $k=0$, \textit{i.e.} $W^p_1\hookrightarrow L^{p^*}$. The
proof is as follows. 

Choose $u\in C^\infty_c(L)$. One can check that, for all $s>1$, $|u|^s\in
W^p_1$, cf. \textit{e.g.} \cite{hebey}. Then, using Part I and H\"{o}lder's
inequality,
\begin{align*}
\||u|^s\|_{L^{\frac{m}{m-1}}} &\leq C \int_L (|u|^s+|\nabla
|u|^s|)\,\mbox{vol}_g\\
&\leq C \int_L (|u|^{s-1}|u|+|u|^{s-1}|\nabla u|)\,\mbox{vol}_g\\
&\leq C \,\| |u|^{s-1}\|_{L^{p'}} \left(\|u\|_{L^p}+\|\nabla u\|_{L^p}\right).
\end{align*}
Let us now choose $s$ so that $(s-1)p'=sm/(m-1)$, \textit{i.e.} $s=p^*(m-1)/m$.
Substituting, we find
\begin{equation*}
\left(\int_L |u|^{p^*}\right)^{\frac{m-1}{m}}\leq C \left(\int_L
|u|^{p^*}\right)^{\frac{1}{p'}}\|u\|_{W^p_1}.
\end{equation*}
This leads to $\|u\|_{L^{p^*}}\leq C \|u\|_{W^p_1}$, for all $u\in
C^\infty_c(L)$. By density, the same is true for all $u\in W^p_1$.

To conclude, we mention that if $(L,g)$ is complete then $C^\infty_c(L)$ is
known to be dense in $W^p_1$ for all $p\geq 1$, cf. \cite{hebey} Theorem 3.1.
\end{remark}

The most basic setting in which all parts of the Sobolev Embedding Theorems hold
is when $L$ is a smooth bounded domain in $\R^m$ endowed with the standard
metric $\tilde{g}$. Another important class of examples is the following.

\begin{theorem}\label{thm:std_ok}
 Assume $(L,g)$ satisfies the following assumptions: there exists $R_1>0$ and
$R_2\in\R$ such that
\begin{equation*}
i(g)\geq R_1,\ \ Ric(g)\geq R_2\,g.
\end{equation*}
Then:
\begin{enumerate}
\item The Sobolev embeddings Part I, Equation \ref{eq:std_partI}, hold for all
$p$ and $l$ satisfying $lp<m$ and for all $k\geq 0$.
\item The exceptional case of Part I, Equation \ref{eq:except_partI}, holds for
all $p$ and $l$ satisfying $lp=m$ and for all $k\geq 0$.
\item The Sobolev embeddings Part II, Equation \ref{eq:std_partII}, hold for all
$p$ and $l$ satisfying $lp>m$ and for all $k\geq 0$.
\end{enumerate}
Furthermore, when $kp>m$, $W^p_k$ is a Banach algebra.
Specifically, there exists $C>0$ such that, for all $u,v\in W^p_k$, the
product $uv$ belongs to $W^p_k$ and satisfies
\begin{equation*}
 \|uv\|_{W^p_k}\leq C \|u\|_{W^p_k}\cdot\|v\|_{W^p_k}.
\end{equation*}
\end{theorem}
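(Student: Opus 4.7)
\emph{Proof proposal.} My plan is to reduce the theorem to the classical Euclidean Sobolev inequalities by constructing a uniform atlas of normal coordinate charts, and then to invoke the earlier reduction results. Since $i(g)\geq R_1>0$ forces $g$ to be complete, the final comment in Remark \ref{rem:complexfromsimple} shows that $C^\infty_c(L)$ is dense in $W^p_1$ for every $p\geq 1$. Combined with Proposition \ref{prop:complexfromsimple} and Remark \ref{rem:complexfromsimple}, this reduces the Sobolev Embedding Theorems to three base cases for compactly supported functions: $W^1_1\hookrightarrow L^{m/(m-1)}$, the exceptional embedding $W^m_1\hookrightarrow L^q$ for all $q\in[m,\infty)$, and $W^p_1\hookrightarrow C^0$ for $p>m$.

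The geometric core consists in producing a uniformly controlled atlas. Fix $r\in(0,R_1/2)$ and extract a maximal family $\{x_i\}\subset L$ with the balls $B(x_i,r/2)$ pairwise disjoint; then $\{B(x_i,r)\}$ covers $L$. Bishop--Gromov volume comparison, fed by the Ricci lower bound $Ric(g)\geq R_2\, g$ and the injectivity radius bound, yields a uniform bound $N$ on the multiplicity of the enlarged cover $\{B(x_i,2r)\}$. On each $B(x_i,r)$ the exponential map $\exp_{x_i}$ is a diffeomorphism, and the Ricci bound together with Rauch/G\"unther comparison gives a uniform two-sided estimate $(1/C)\tilde g\leq \exp_{x_i}^*g\leq C\tilde g$ on the corresponding Euclidean ball, where $\tilde g$ is the Euclidean metric. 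From this I build a subordinate partition of unity $\{\chi_i\}$ with $|\chi_i|+|\nabla\chi_i|$ bounded uniformly in $i$.

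With this bounded-geometry package the three base embeddings follow by localisation: for $u\in C^\infty_c(L)$ I write $u=\sum_i\chi_i u$, apply the classical Euclidean Sobolev inequality to each $\chi_i u$ transplanted to the Euclidean ball, and use the metric comparison and the bound on $|\nabla\chi_i|$ to keep the constants uniform; summing through $\|u\|_{L^q}^q\leq\sum_i\|\chi_i u\|_{L^q}^q$ in the first two cases, or $\|u\|_{C^0}\leq\sup_i\|\chi_i u\|_{C^0}$ in the third, together with the finite multiplicity $N$, yields the required inequalities with constants depending only on $m$, $R_1$, $R_2$. For the Banach algebra claim I use Part II to embed $W^p_{k+l}\hookrightarrow C^k$, expand $\nabla^j(uv)$ by the Leibniz rule for $j\leq k+l$, and in each term place the factor with more derivatives in $L^p$ and the other in $L^\infty$, using the $C^k$ embedding directly for low-order factors and an intermediate Sobolev embedding plus H\"older for the rest.

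The main obstacle is the geometric step: producing a uniform atlas of normal coordinates with $C^0$-comparable metric and a well-behaved subordinate partition of unity from only a lower bound on the injectivity radius and a lower Ricci bound. The remaining pieces --- the reduction to the three base cases, the localisation, and the Leibniz/H\"older bookkeeping for the algebra property --- are essentially formal once this bounded-geometry package is in hand.
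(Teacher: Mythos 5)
Your overall architecture (reduce to the three base cases via Proposition \ref{prop:complexfromsimple} and the density remark, then localise over a uniformly locally finite cover with a bounded partition of unity) is a legitimate alternative to the paper's route, which instead avoids partitions of unity by integrating the local inequality over the centre point $x$ and exchanging the order of integration. The covering/multiplicity part is also fine: a maximal $r/2$-separated set plus Bishop--Gromov under $Ric(g)\geq R_2 g$ does give the finite multiplicity $N$. The problem is the chart construction itself.

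The claim that ``the Ricci bound together with Rauch/G\"unther comparison gives a uniform two-sided estimate $(1/C)\tilde g\leq \exp_{x_i}^*g\leq C\tilde g$'' is where the argument breaks. Rauch comparison controls Jacobi fields only under two-sided \emph{sectional} curvature bounds, and G\"unther's inequality needs an \emph{upper} sectional bound; neither is available here. Under only $i(g)\geq R_1$ and $Ric(g)\geq R_2 g$, Bishop's inequality bounds the volume element $\sqrt{\det(\exp_x^*g)}$ from above, i.e.\ the \emph{product} of the eigenvalues of the metric in normal coordinates, but individual eigenvalues can still degenerate or blow up (the absence of conjugate points only prevents Jacobi fields from vanishing, not from becoming very small or very large). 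So normal coordinates do not yield the $C^0$-comparison you need, and this is precisely why the paper (following Hebey/Anderson--Cheeger) invokes Theorem \ref{thm:harmonic_coords}: the existence of a uniform \emph{harmonic} radius with $\|\phi_x^*g-\tilde g\|_{C^0}\leq\epsilon$ is a genuinely deep elliptic result under Ricci-only hypotheses, not a consequence of elementary comparison geometry. If you replace your normal-coordinate step by that theorem, the rest of your localisation and the Leibniz/H\"older bookkeeping for the Banach algebra property go through.
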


We will prove Theorem \ref{thm:std_ok} below. Roughly speaking, the reason it
holds is the following. Given any coordinate system on $L$, the embeddings hold
on every chart endowed with the flat metric $\tilde{g}$. Now recall that, given
any $(L,g)$ and any $x\in L$, it is always possible to find coordinates
$\phi_x:B\subset\R^m\rightarrow L$ in which the metric $g$ is a small
perturbation of the flat metric: this implies that the embeddings hold locally
also with respect to $g$. The problem is that, in general, the size of the ball
$B$, thus the corresponding Sobolev constants, will depend on $x$. Our
assumptions on $L$, however, can be used to build a special coordinate system
whose charts admit uniform bounds. One can then show that this implies that the
embeddings hold globally. The main technical step in the proof of Theorem
\ref{thm:std_ok} is thus the following result concerning the
existence and properties of \textit{harmonic coordinate systems}.

\begin{theorem}\label{thm:harmonic_coords}
Assume $(L,g)$ satisfies the assumptions of Theorem \ref{thm:std_ok}.
Then for all small $\epsilon>0$ there exists $r>0$ such that, for each $x\in L$,
there exist coordinates $\phi_x:B_r\subset\R^m\rightarrow L$ satisfying
\begin{enumerate}
\item $\phi_x^{-1}$ (seen as a map into $\R^m$) is harmonic.
\item $\|\phi_x^*g-\tilde{g}\|_{C^0}\leq\epsilon$.
\end{enumerate}
\end{theorem}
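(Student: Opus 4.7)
The plan is to construct the harmonic coordinates by first introducing geodesic normal coordinates centered at $x$ and then perturbing them to be harmonic, following the classical approach of Jost--Karcher. The hypotheses $i(g)\geq R_1$ and $Ric(g)\geq R_2\,g$ will be used to supply the uniform (in $x\in L$) control on all the quantities that appear.

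First I would use the bound $i(g)\geq R_1$ to introduce, for each $x\in L$, geodesic normal coordinates $\psi_x:B_{R_1/2}(0)\subset\R^m\to L$. At the origin $(\psi_x^*g)(0)=\tilde{g}$, and via comparison geometry (Bishop--Gromov volume comparison and Rauch-type estimates on the exponential map, both controlled by $R_1$ and $R_2$) one shows that $\|\psi_x^*g-\tilde{g}\|_{C^0(B_r)}$ can be made arbitrarily small by taking $r>0$ sufficiently small, with $r$ depending only on $R_1$, $R_2$ and $\epsilon$, and not on $x$. In particular, read in these coordinates, $\Delta_g$ becomes a uniformly elliptic perturbation of $-\Delta_{\tilde{g}}$ with ellipticity constants depending only on $R_1$, $R_2$.

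Next, on each such ball $B_r$ I would solve the Dirichlet problems
\begin{equation*}
\Delta_g y^i=0\ \textrm{in }B_r,\qquad y^i|_{\partial B_r}=\psi_x^i,\qquad i=1,\dots,m,
\end{equation*}
where $\psi_x^i$ are the coordinate functions of $\psi_x$. Unique smooth solutions exist by standard elliptic theory. Setting $u^i:=y^i-\psi_x^i$, one has $u^i|_{\partial B_r}=0$ and $\Delta_g u^i=-\Delta_g\psi_x^i$; because the Christoffel symbols of $g$ vanish at $x$ in normal coordinates, the forcing term vanishes at the origin and satisfies $|\Delta_g\psi_x^i|\leq C r$ on $B_r$, with $C$ uniform in $x$. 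Uniform $W^{2,p}$ or Schauder estimates for this Dirichlet problem then yield $\|u^i\|_{C^1(B_r)}\leq C'r^2$.

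For $r$ small enough, the map $\phi_x^{-1}:=(y^1,\dots,y^m)$ thus has differential uniformly close to that of $\psi_x^{-1}$ and so defines a diffeomorphism onto a neighborhood of $x$; it is harmonic by construction, and the metric components in the new chart differ from those in the normal chart by an arbitrarily small $C^0$ amount, yielding $\|\phi_x^*g-\tilde{g}\|_{C^0}\leq\epsilon$. The main obstacle is translating the one-sided Ricci bound together with the injectivity bound into the uniform geometric and analytic estimates used above, so that the radius $r$ and the constants in the elliptic estimates depend only on $R_1$, $R_2$, $\epsilon$ and not on the point $x\in L$.
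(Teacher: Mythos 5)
The paper does not actually prove Theorem \ref{thm:harmonic_coords}: it quotes it from Hebey \cite{hebey} (Theorem 1.2), where it is established in this generality by the blow-up and compactness argument of Anderson and Cheeger. Your proposal instead attempts the classical Jost--Karcher construction, and that is where a genuine gap appears.

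The problem is your very first step: the claim that $\|\psi_x^*g-\tilde{g}\|_{C^0(B_r)}$ can be made uniformly small in geodesic normal coordinates using only $i(g)\geq R_1$ and $Ric(g)\geq R_2\,g$. The Rauch-type estimates you invoke require \emph{two-sided sectional curvature bounds}; a one-sided Ricci bound controls neither the differential of the exponential map nor the expansion $g_{ij}(y)=\delta_{ij}+\frac{1}{3}R_{ikjl}y^ky^l+O(|y|^3)$, whose second-order term involves the full (here unbounded) curvature tensor. Bishop--Gromov gives only volume comparison, not pointwise metric control. Under the stated hypotheses the metric in normal coordinates on a ball of any fixed radius need not be close to Euclidean, so the subsequent Dirichlet-problem correction has no uniform background to perturb from; the estimate $|\Delta_g\psi_x^i|\leq Cr$ likewise presupposes a bound on derivatives of the Christoffel symbols, i.e.\ on curvature. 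This is precisely why the theorem in this generality is nontrivial: the Anderson--Cheeger proof bypasses normal coordinates entirely and argues by contradiction --- one rescales a putative sequence of points whose harmonic radius tends to zero, extracts a limit in the $W^{1,p}$-harmonic-coordinate topology using the lower Ricci and injectivity radius bounds, identifies the limit with flat $\R^m$, and contradicts the normalization of the harmonic radius. Your outline would be essentially correct (and is the standard argument) if the hypotheses were strengthened to a two-sided curvature bound $|Rm|\leq\Lambda$ together with the injectivity radius bound, but as written it does not prove the stated theorem, and your closing sentence identifies the obstacle without overcoming it.
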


\begin{remark} \label{rem:harmonic_coords}
Theorem \ref{thm:harmonic_coords} can be heavily improved, cf. \cite{hebey}
Theorem 1.2. Firstly, it is actually a local result, \textit{i.e.} one can get
similar results for any open subset of $L$ by imposing similar assumptions on a
slightly larger subset. Secondly, these same assumptions actually yield certain
$C^{0,\alpha}$ bounds. Thirdly, assumptions on the higher derivatives of the
Ricci tensor yield certain bounds on the higher derivatives of
$\phi_x^*g-\tilde{g}$, see Remark \ref{rem:harmonic_coordsbis} for details.

To conclude, it may be useful to emphasize that imposing a global lower bound on
the injectivity radius of $(L,g)$ implies completeness.
\end{remark}

\begin{proof}[Proof of Theorem \ref{thm:std_ok}]
As seen in Proposition \ref{prop:complexfromsimple}, it is sufficient to prove
the Sobolev Embedding Theorems in the simplest cases. Concerning Part I, let us
choose $u\in W^p_1(L)$. Using the coordinates of Theorem
\ref{thm:harmonic_coords}, $\phi_x^*u\in W^p_1(B_r)$. All Sobolev Embedding
Theorems hold on $B_r$ with its standard metric $\tilde{g}$. Thus there exists a
constant $C$ such that, with respect to $\tilde{g}$,
\begin{equation}\label{eq:sob_local}
\|\phi_x^*u\|_{L^{p^*}(B_r)}\leq C \|\phi_x^*u\|_{W^p_1(B_r)}.
\end{equation}
The fact that $\nabla u=du$ implies that Equation \ref{eq:sob_local} involves
only $C^0$ information on the metric. Since $\phi_x^*g$ is $C^0$-close to
$\tilde{g}$, up to a small change of the constant $C$ the same inequality holds
with respect to $\phi_x^*g$. Let $B_x(r)$ denote the ball in $(L,g)$ with center
$x$ and radius $r$. Then $B_x(r/2)\subset\phi_x(B_r)\subset B_x(2r)$ so
\begin{align*}
\int_{B_x(r/2)}|u|^{p^*} \mbox{vol}_g&\leq
\int_{\phi_x(B_r)}|u|^{p^*}\mbox{vol}_g\\
&\leq
C\left(\int_{\phi_x(B_r)}(|u|^p+|du|^p)\,\mbox{vol}_g\right)^{\frac{p^*-p+p}{p}}
\\
&\leq C\left(\int_L
(|u|^p+|du|^p)\,\mbox{vol}_g\right)^{\frac{p^*-p}{p}}\left(\int_{B_x(2r)}
(|u|^p+|du|^p)\,\mbox{vol}_g\right).
\end{align*}
Let us now integrate both sides of the above equation with respect to $x\in L$.
We can then change the order of integration according to the formula
\begin{equation*}
\int_{x\in L}\left(\int_{y\in B_x(r)}
f(y)\,\mbox{vol}_g\right)\,\mbox{vol}_g=\int_{y\in L}f(y)\left(\int_{x\in
B_y(r)} \mbox{vol}_g\right)\,\mbox{vol}_g.
\end{equation*}
Reducing $r$ if necessary, the $C^0$ estimate on $g$ yields uniform
bounds (with respect to $x$) on $\mbox{vol}_g(B_x(r/2))$ and
$\mbox{vol}_g(B_x(2r))$ because analogous bounds hold for $\tilde{g}$. This
allows us to substitute the inner integrals with appropriate constants. We
conclude that
\begin{align*}
\int_L |u|^{p^*}\,\mbox{vol}_g &\leq C\left(\int_L
(|u|^p+|du|^p)\,\mbox{vol}_g\right)^{\frac{p^*-p}{p}}\left(\int_L
(|u|^p+|du|^p)\,\mbox{vol}_g\right)\\
&= C\left(\int_L (|u|^p+|du|^p)\,\mbox{vol}_g\right)^{\frac{p^*}{p}}.
\end{align*}
We conclude by raising both sides of the above equation to the power $1/{p^*}$.
Notice that the final constant $C$ can be estimated in terms of the volume of
balls in $L$ and of the constant $C$ appearing in Equation \ref{eq:sob_local}.

The exceptional case of Part I is similar: it is sufficient to replace $p^*$
with any $q>m$. Part II is also similar, though slightly simpler. Specifically,
one finds as above that
\begin{equation*}
\|u\|_{C^0(\phi_x(B_r))} \leq C \|u\|_{W^p_1(\phi_x(B_r))}\leq C
\|u\|_{W^p_1(L)}
\end{equation*}
Since this holds for all $x\in L$, we conclude that $\|u\|_{C^0(L)}\leq C
\|u\|_{W^p_1(L)}$.

The proof that $W^p_k$ is a Banach algebra relies on the Sobolev Embedding Theorems and some simple algebraic manipulations. For brevity we present only the case $W^p_2$ with $2p>m$, which already contains all the main ideas; \cite{adams}, Theorem 5.23, gives the general proof for domains in $\R^m$. 

Recall the Leibniz rule
\begin{equation*}
\nabla^j(uv)=\sum_{k=0}^j\binom{j}{k}(\nabla^ku)\otimes(\nabla^{j-k}v).
\end{equation*}
It thus suffices to estimate each term on the right hand side, for $j=0,1,2$. The embedding $W^p_2\hookrightarrow C^0$ implies that 
\begin{equation*}
 \int_L|uv|^p\,\mbox{vol}_g\leq \|u\|^p_{C^0}\cdot\int_L|v|^p\,\mbox{vol}_g\leq C\|u\|^p_{W^p_2}\cdot\|v\|^p_{W^p_2}.
\end{equation*}
We can analogously estimate all other terms except perhaps $\int|\nabla u|^p|\nabla v|^p$. If $p>m$ we can use the stronger embedding $W^p_2\hookrightarrow C^1$ to estimate this term as above. Otherwise we use the following fact.

\textit{Fact}: Assume $m/2<p\leq m$. Then there exist $r,r'$ such that $1/r+1/r'=1$ and $pr<p^*$, $pr'<p^*$. 

This fact is obvious if $p=m$ (using the convention $p^*=\infty$). For $p<m$ it suffices to choose $r$ such that $m/p<r<m/(p-m)$ and $r'=r/(r-1)$. 

The Sobolev Embedding Theorem, Part I, then yields $W^p_1\hookrightarrow L^{pr}$ so $|\nabla u|^p\in L^r$. Likewise, $|\nabla v|^p\in L^{r'}$ so, using H\"older's inequality,
\begin{align*}
 \int_L|\nabla u|^p|\nabla v|^p\,\mbox{vol}_g&\leq \||\nabla u|^p\|_{L^r}\cdot\||\nabla v|^p\|_{L^{r'}}=\|\nabla u\|^p_{L^{pr}}\cdot\|\nabla v\|^p_{L^{pr'}}\\
&\leq C\|\nabla u\|^p_{W^p_1}\cdot\|\nabla v\|^p_{W^p_1}\leq C\|u\|^p_{W^p_2}\cdot\|v\|^p_{W^p_2}.
\end{align*}
Combining all these estimates proves that $\|uv\|_{W^p_2}\leq C\|u\|_{W^p_2}\cdot\|v\|_{W^p_2}$, as claimed.
\end{proof}

\begin{example}\label{e:std_ok}
Any compact oriented Riemannian manifold $(L,g)$ satisfies the assumptions of
Theorem \ref{thm:std_ok}. Thus the Sobolev Embedding Theorems hold in full
generality for such manifolds. The same is true for the non-compact manifold
$\R^m$, endowed with the standard metric $\tilde{g}$.

Let $(\Sigma,g')$ be a compact oriented Riemannian manifold. Consider
$L:=\Sigma\times\R$ endowed with the metric $\tilde{h}:=dz^2+g'$. It is clear
that $(L,\tilde{h})$ satisfies the assumptions of Theorem \ref{thm:std_ok} so
again the Sobolev Embedding Theorems hold in full generality for these
manifolds. More generally they hold for the asymptotically cylindrical manifolds
of Section \ref{s:manifolds}. Notice however that here we are using
the Sobolev spaces defined in Equation \ref{eq:std_sob}. In Section
\ref{s:manifolds} we will verify the Sobolev Embedding Theorems for a different
class of Sobolev spaces, cf. Definition \ref{def:acyl_sectionspaces}. 
\end{example}


\section{Scaled Sobolev spaces}\label{s:scaled}

In applications standard Sobolev spaces are often not satisfactory for various
reasons. Firstly, they do not have good properties with respect to rescalings of
the sort $(L,t^2g)$. Secondly, uniform geometric
bounds of the sort seen in Theorem \ref{thm:std_ok} are too strong. Thirdly, the
finiteness condition in Equation
\ref{eq:std_sob} is very rigid and restrictive.

For all the above reasons it is often useful to modify the Sobolev norms. A
simple way of addressing the first two problems is to introduce an extra
piece of data, as follows.

Let $(L,g,\rho)$ be an oriented Riemannian manifold endowed with a \textit{scale
factor} $\rho>0$ or a \textit{scale function} $\rho=\rho(x)>0$. Given any metric
pair $(E,\nabla)$, the \textit{scaled Sobolev spaces} are defined by
\begin{equation}\label{eq:inv_sob}
{W^p_{k;sc}(E)}:=\mbox{Banach space completion of the space }
\left\{\sigma\in
C^\infty(E):\|\sigma\|_{W^p_{k;sc}}<\infty\right\},
\end{equation}
where we use the norm $\|\sigma\|_{W^p_{k;sc}}:=\left(\Sigma_{j=0}^k \int_L
|\rho^j\nabla^j\sigma|_g^p\rho^{-m}\,\mbox{vol}_g\right)^{1/p}$. 

Notice that at the scale $\rho\equiv 1$ these norms coincide with the standard
norms. 

\begin{remark}\label{rem:scalednorms}
Let us slightly change notation, using $g_L$ (respectively, $g_E$) to denote the
metric on $L$ (respectively, on $E$). 
The metric $g$ used in the above norms to measure $\nabla^j\sigma$ is obtained
by tensoring $g_L$ (applied to $\nabla^j$) with $g_E$ (applied to $\sigma$): let
us write $g=g_L\otimes g_E$. We then find
\begin{equation*}
|\rho^j\nabla^j\sigma|_{g_L\otimes g_E}\rho^{-m}\,\mbox{vol}_{g_L\otimes
g_E}=|\nabla^j\sigma|_{(\rho^{-2}g_L)\otimes
g_E}\mbox{vol}_{(\rho^{-2}g_L)\otimes g_E}.
\end{equation*}
Roughly speaking, the scaled norms thus coincide with the standard norms
obtained via the conformally equivalent metric $\rho^{-2}g_L$ on $L$. It is
important to emphasize, however, that we are conformally rescaling only part of
the metric.
This can be confusing when $E$ is a tensor bundle over $L$, endowed with the
induced metric: it would then be natural to also rescale the metric of $E$.
We are also not changing the connections $\nabla$. In general these
connections are not metric connections with respect to $(\rho^{-2}g_L)\otimes
g_E$. This has important consequences regarding the Sobolev Embedding Theorems
for scaled Sobolev spaces, as follows.

Naively, one might hope that such theorems hold under the assumptions:
\begin{equation*}
i(\rho^{-2}g)\geq R_1,\ \ Ric(\rho^{-2}g)\geq R_2 \rho^{-2}g.
\end{equation*}
Indeed, these assumptions do suffice to prove the Sobolev Embedding Theorems in
the simplest case, \textit{i.e.} $l=1$ and $k=0$. However, the general case
requires Kato's inequality, Lemma \ref{l:kato}, which in turn requires metric
connections. To prove these theorems we will thus need further assumptions on
$\rho$, cf. Theorem \ref{thm:scaled_ok}.
\end{remark}

We now define \textit{rescaling} to be an action of $\R^+$ on the triple
$(L,g,\rho)$, via $t\cdot(L,g,\rho):=(L,t^2g, t\rho)$. Recall that the
Levi-Civita connection $\nabla$ on $L$ does not change under rescaling. Using
this fact it is simple to check that $\|\sigma\|_{W^p_{k;sc}}$, calculated with
respect to $t\cdot(L,g,\rho)$, coincides with $\|\sigma\|_{W^p_{k;sc}}$,
calculated with respect to $(L,g,\rho)$: in this sense the scaled norm is
invariant under rescaling.

\begin{remark}\label{rem:scalednormsbis}
As in Remark \ref{rem:scalednorms}, our definition of rescaling requires some
care. To explain this let us adopt the same notation as in Remark
\ref{rem:scalednorms}. Our notion of rescaling affects only the metric on $L$,
not the metric on $E$. As before, this can be confusing when $E$ is a tensor
bundle over $L$, endowed with the induced metric.
\end{remark}

As in Section \ref{s:std}, it is important to find conditions under which
$(L,g,\rho)$ and $(L,\hat{g},\rho)$ define equivalent norms.

\begin{definition}\label{def:equivalentscaledmetrics}
 Let $(L,\rho)$ be a manifold endowed with a scale function. We say that two
Riemannian metrics $g$, $\hat{g}$ are \textit{scaled-equivalent} if they
satisfy the following assumptions:
\begin{description}
\item[A1] There exists $C_0>0$ such that 
\begin{equation*}
(1/C_0)g\leq \hat{g}\leq C_0 g.
\end{equation*}
\item[A2] For all $j\geq 1$ there exists $C_j>0$
such that
\begin{equation*}
|\nabla^j\hat{g}|_{\rho^{-2}g\otimes g_E}\leq C_j,
\end{equation*}
where $\nabla$ is the Levi-Civita connection defined by $g$, $E=T^*L\otimes
T^*L$ and we are using the notation introduced in Remark \ref{rem:scalednorms}.
\end{description}
\end{definition}

\begin{remark}\label{rem:scaledequivalence}
 As in Remark \ref{rem:equivalence}, one can check that
\begin{equation*}
|\nabla\hat{g}|_{\rho^{-2}g\otimes g_E}\leq C_1\Rightarrow
|A(\hat{g})|_{\rho^{-2}g\otimes g_E}\leq C_1.
\end{equation*}
In turn this implies that 
$|A|_{\rho^{-2}g\otimes g_E}\leq C_1$, where now $A$ denotes the difference
$\nabla-\hat{\nabla}$ of the connections on $TL$ and $E=T^*L\otimes TL$. 

Again as in Remark \ref{rem:equivalence}, one can check that if for all
$j\geq 0$ there exists $C_j>0$ such that 
$$|\nabla^j(\hat{g}-g)|_{\rho^{-2}g\otimes g_E}\leq C_j$$
and if $C_0$ is sufficiently small then $g$, $\hat{g}$ satisfy Assumptions
A1, A2.
\end{remark}

The following
result is a simple consequence of Remark \ref{rem:scalednorms} and Lemma
\ref{l:equivalentstdnorms}. 

\begin{lemma}\label{l:equivalentscalednorms}
Assume $(L,g,\rho)$, $(L,\hat{g},\rho)$ are scaled-equivalent in the sense of
Definition \ref{def:equivalentscaledmetrics}. 
Then the scaled Sobolev norms are equivalent.
\end{lemma}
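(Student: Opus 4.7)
The plan is to reduce the statement to Lemma \ref{l:equivalentstdnorms} by means of the identification from Remark \ref{rem:scalednorms}. That remark rewrites the scaled Sobolev norm of $(L,g,\rho)$ as the standard Sobolev norm computed using the metric $(\rho^{-2}g_L)\otimes g_E$ on the relevant tensor bundle, while retaining the original Levi-Civita connection $\nabla$ of $g$ and the unchanged fibre metric $g_E$. The analogous reformulation applies to $(L,\hat{g},\rho)$. The task is therefore to check that the proof scheme of Lemma \ref{l:equivalentstdnorms} still goes through in this hybrid setting.

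First I would dispose of the case $k=0$: here no derivatives appear, and Assumption A1 alone yields pointwise equivalence of $\rho^{-2}g$ and $\rho^{-2}\hat{g}$ (with the same constant $C_0$) together with equivalence of $\vol_g$ and $\vol_{\hat{g}}$, which is enough. For $k\geq 1$ I would introduce, as in the proof of Lemma \ref{l:equivalentstdnorms}, the difference tensor $A:=\nabla-\hat{\nabla}$, write $\nabla=\hat{\nabla}+A$, and expand $\nabla^j\sigma$ via the Leibniz rule to obtain a sum of terms of the form $(\hat{\nabla}^{k_0}A)\otimes\cdots\otimes(\hat{\nabla}^{k_r}A)\otimes\hat{\nabla}^{k}\sigma$ with $k\leq j$. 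To control these pointwise I need bounds on $A,\hat{\nabla}A,\dots,\hat{\nabla}^{k-1}A$ measured in exactly the mixed metric $(\rho^{-2}g)\otimes g_E$ in which the scaled norm is being computed; this is precisely what Assumption A2, together with Remark \ref{rem:scaledequivalence}, delivers.

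The principal obstacle, and the reason the statement is not a literal corollary of Lemma \ref{l:equivalentstdnorms}, is the asymmetry emphasized in Remarks \ref{rem:scalednorms} and \ref{rem:scalednormsbis}: although the scaled norm resembles a standard norm for the conformally rescaled metric $\rho^{-2}g$, the connection in use is not the Levi-Civita connection of $\rho^{-2}g$, and the metric on $E$ is not rescaled. Consequently one cannot simply quote Lemma \ref{l:equivalentstdnorms} with $\rho^{-2}g$ in place of $g$; one must re-run its proof while carefully tracking these discrepancies and verifying that the norm of $A$ in the mixed metric $\rho^{-2}g\otimes g_E$ is the quantity being bounded. Once this bookkeeping is set up, the estimates from Lemma \ref{l:equivalentstdnorms} transfer essentially verbatim, and summing the resulting pointwise inequalities against the measure $\rho^{-m}\vol_g$ (equivalent to $\rho^{-m}\vol_{\hat{g}}$ by A1) yields the desired equivalence of scaled Sobolev norms.
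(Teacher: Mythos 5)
Your proposal is correct and follows exactly the route the paper intends: the paper states the lemma as "a simple consequence of Remark \ref{rem:scalednorms} and Lemma \ref{l:equivalentstdnorms}", and your argument is precisely the expansion of that assertion, with the key observation (correctly made) that Assumption A2 of Definition \ref{def:equivalentscaledmetrics} bounds $A$ and its derivatives in exactly the mixed metric $\rho^{-2}g\otimes g_E$ needed when re-running the proof of Lemma \ref{l:equivalentstdnorms} for the scaled norms.
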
 

We can also define the \textit{scaled spaces of $C^k$ sections}
\begin{equation}\label{eq:scaled_C^k}
C^k_{sc}(E):=\left\{\sigma\in C^k(E): \|\sigma\|_{C^k_{sc}}<\infty\right\},
\end{equation}
where we use the norm $\|\sigma\|_{C^k_{sc}}:=\sum_{j=0}^k \sup_{x\in
L}|\rho^j\nabla^j\sigma|_g$. Once again, these norms define Banach spaces. 

\begin{remark} \label{rem:harmonic_coordsbis}
One can analogously define $C^{k,\alpha}_{sc}$ spaces. Notice that Equation
\ref{eq:scaled_C^k} implies that $C^0_{sc}=C^0$. It is these spaces which are
relevant to the generalization to higher derivatives of Theorem
\ref{thm:harmonic_coords}. Specifically, bounds on the higher derivatives of
$Ric(g)$ yield $C^{k,\alpha}_{sc}$ bounds on $\phi_x^*g-\tilde{g}$ with respect
to the (constant) scale factor $r$ determined by the theorem.
\end{remark}

We are now ready to study the Sobolev Embedding Theorems for scaled spaces. As
mentioned in Remark \ref{rem:scalednorms}, these theorems require further
assumptions on $\rho$. 

\begin{theorem}\label{thm:scaled_ok}
Let $(L,g)$ be a Riemannian manifold and $\rho$ a positive function on
$L$. Assume there exist constants $R_1>0$, $R_2\in\R$, $R_3>1$ and $\zeta>0$
such that:
\begin{description}
\item[A1] $\forall x\in L,\ \ i_x(g)\geq R_1\rho(x)$.
\item[A2] $\forall x\in L,\ \ Ric_x(g)\geq R_2 \rho(x)^{-2}g_x$.
\item[A3] $\forall x\in L, \forall y\in B(x,\zeta\rho(x))$,
\begin{equation*}
(1/R_3)\rho(x)\leq\rho(y)\leq R_3\rho(x).
\end{equation*}
\end{description}
Then all parts of the Sobolev Embedding Theorems hold for scaled norms and for
any metric pair $(E,\nabla)$. Furthermore, when $kp>m$,
$W^p_{k;sc}$ is a Banach algebra.

Now let $\hat{g}$ be a second Riemannian metric on $L$ such that, for some
$C_0>0$, $(1/C_0)g\leq \hat{g}\leq C_0 g$. Then the scaled Sobolev Embedding
Theorems hold also for $(L,\hat{g},\rho)$ and for any metric pair $(E,\nabla)$.
The Sobolev constants of $\hat{g}$ depend only on the Sobolev constants of $g$
and on $C_0$.
\end{theorem}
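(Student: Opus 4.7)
My plan is to mimic, after a pointwise constant rescaling, the proof of Theorem \ref{thm:std_ok}. Fix $x\in L$ and let $g':=\rho(x)^{-2}g$; this is a constant rescaling, so the Levi-Civita connection of $g'$ coincides with that of $g$. Assumption A3 pinches $\rho$ between $\rho(x)/R_3$ and $R_3\rho(x)$ on $B(x,\zeta\rho(x))$. Combined with A1 and A2, this yields $i_y(g')\geq R_1/R_3$ and a uniform lower Ricci bound $Ric(g')\geq R_2'\,g'$ for $y$ in a $g'$-ball of radius comparable to $\zeta$, where $R_2'$ depends only on $R_2$ and $R_3$. Thus on such a ball the rescaled metric $g'$ satisfies the hypotheses of Theorem \ref{thm:std_ok} with constants independent of $x$. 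By the computation sketched in Remark \ref{rem:scalednorms}, and because $\rho\sim\rho(x)$ on the ball, the scaled norm coming from $(g,\rho)$ restricted to $B(x,\zeta\rho(x))$ is equivalent to the standard norm coming from $g'$ on the corresponding $g'$-ball, with constants depending only on $R_3$.

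Applying Theorem \ref{thm:std_ok} to $g'$ on each such ball then yields a local scaled Sobolev inequality with constants independent of $x$. To globalize I would run the integrate-in-$x$ change-of-order argument from the proof of Theorem \ref{thm:std_ok}, now using the scaled measure $\rho^{-m}\vol_g$; the required uniform two-sided bound on $\int_{B(x,\zeta\rho(x))}\rho^{-m}\vol_g$ is provided by the same harmonic coordinates that supply the local inequality. This establishes the simplest case $l=1,\,k=0$ of Part I for scalar-valued $\sigma$. All remaining parts of the scaled Sobolev Embedding Theorems, for sections of arbitrary metric pairs $(E,\nabla)$, then follow from the scaled analogue of Proposition \ref{prop:complexfromsimple}: Kato's inequality (Lemma \ref{l:kato}) is pointwise and insensitive to the scale factor $\rho$, and the iteration $p\mapsto p^*_l$ is numerical, so those arguments go through verbatim.

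The Banach algebra assertion follows in the familiar way from the scaled Leibniz rule for $\nabla$, H\"older's inequality and the scaled embedding $W^p_{k+l;sc}\hookrightarrow C^k_{sc}$. For the second metric $\hat{g}$, the same scaled analogue of Proposition \ref{prop:complexfromsimple} reduces everything to $l=1,\,k=0$ for functions; there $\nabla u=du$ is metric-independent and the scaled norm depends only pointwise on $g$, so the $C^0$-equivalence $(1/C_0)g\leq\hat g\leq C_0 g$ transfers the estimate with constants controlled by those for $g$ and by $C_0$. I expect the main technical obstacle to be the bookkeeping in the patching step: one must check that Theorem \ref{thm:harmonic_coords} can be applied to the rescaled balls with a common radius $r$ depending only on $R_1,R_2,R_3,\zeta$, and that the integrate-in-$x$ step produces a genuine constant rather than an $x$-dependent factor. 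Both rely on the uniformity of the rescaled geometric bounds established in the first paragraph.
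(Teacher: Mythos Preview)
Your overall strategy---locally rescale by the constant $\rho(x)^{-2}$, obtain uniform harmonic-coordinate bounds from A1--A3, and globalize by integrating in $x$---is exactly the paper's approach. The gap is in your second paragraph, where you globalize only the case $l=1,\,k=0$ and then invoke a ``scaled analogue of Proposition~\ref{prop:complexfromsimple}'' to obtain all higher $k$. That global reduction does \emph{not} go through verbatim. To deduce $W^p_{2;sc}\hookrightarrow W^{p^*}_{1;sc}$ from the function embedding $W^p_{1;sc}\hookrightarrow L^{p^*}_{sc}$ one must apply the latter to something like $v=\rho\,|\nabla u|_g$; but $dv$ then contains the term $(d\rho)\,|\nabla u|_g$, and A3 is only an oscillation bound on $\rho$ at scale $\zeta\rho(x)$, not a pointwise bound on $|d\rho|_g$. (Applying the embedding instead to $|\nabla u|_g$ avoids $d\rho$ but produces the wrong powers of $\rho$ on both sides.) This is precisely the obstruction flagged in Remark~\ref{rem:scalednorms}: $\nabla$ is not metric for $\rho^{-2}g$, so Kato cannot be used globally in the conformally rescaled picture.

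The paper circumvents this by treating each $k$ \emph{locally}. On $B'_x$ it applies Kato to $|\nabla u|_h$ with the \emph{constant}-rescaled metric $h=\rho(x)^{-2}g$ (whose Levi-Civita connection coincides with that of $g$), obtaining an inequality involving powers of the constant $\rho(x)$; only \emph{after} the differentiation is complete does it invoke A3 to replace $\rho(x)$ by the function $\rho(y)$, and then globalize. Your local paragraph already contains all the ingredients for this---you simply need to run it for each $k$ rather than only for $k=0$. The same remark applies to your $\hat g$ argument: reducing to $l=1,\,k=0$ globally is insufficient; the paper handles $k\geq 1$ for $\hat g$ by the same local mechanism, using $\hat h:=\rho(x)^{-2}\hat g$ and the connection $\hat\nabla$, then substituting $h\to\hat h$ via the $C^0$-equivalence.
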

\begin{proof} Let us prove Part 1 for functions, assuming $l=1$, $k=0$. Choose
$x\in L$. Set $B_x:=B(x,\zeta\rho(x))$. For $y\in B_x$, consider the rescaled
metric $h$ defined by $h_y:=\rho(x)^{-2}g_y$. Assumption A1 shows that
$i_y(g)\geq R_1\rho(y)$. Using Assumption A3 we find 
\begin{equation*}
i_y(h)=\rho(x)^{-1}i_y(g)\geq R_1\rho(y)\rho(x)^{-1}\geq R_1/R_3.
\end{equation*}
Now recall that the Ricci curvature $Ric$ is invariant under rescaling,
\textit{i.e.} $Ric(h)=Ric(g)$. Then Assumptions A2 and A3 show that 
\begin{equation*}
Ric_y(h)=Ric_y(g)\geq R_2\rho(y)^{-2}\rho(x)^2h\geq (R_2/R_3^2)h.
\end{equation*}
We have thus obtained lower bounds on the injectivity radius and Ricci curvature
of $(B_x,h)$. Notice that these bounds are independent of $x$. Recall from
Remark \ref{rem:harmonic_coords} that Theorem \ref{thm:harmonic_coords} is
essentially local. Specifically, set $B'_x:=B(x, (1/2)\zeta\rho(x))$. Then for
any $\epsilon>0$ there exists $r=r(p,R_1,R_2,R_3,\epsilon,m)$ such that, for any
$x\in L$, there exist coordinates $\phi_x:B_r\rightarrow (B'_x,h)$ satisfying
$\|\phi_x^*h-\tilde{g}\|_{C^0}\leq \epsilon$.

Exactly as in the proof of Theorem \ref{thm:std_ok}, we can now use the local
Sobolev Embedding Theorems for $B_r$ to conclude that 
\begin{equation}\label{eq:scaled_okzero}
\left(\int_{B'_x}|u|^{p^*}\mbox{vol}_h\right)^{1/p^*}\leq C
\left(\int_{B'_x}(|u|^p+|du|_h^p)\,\mbox{vol}_h\right)^{1/p}.
\end{equation}
Assumption A3 allows us, up to a change of constants, to replace the (locally)
constant quantity $\rho(x)$ with the function $\rho(y)$. Remark
\ref{rem:scalednorms} shows how replacing $\rho^{-2}g$ with $g$ leads to the
scaled norms. Proceeding as in the proof of Theorem \ref{thm:std_ok}, via double
integration, we then get
\begin{equation}\label{eq:scaled_ok}
\|u\|_{L^{p^*}_{sc}}\leq C\|u\|_{W^{p}_{1;sc}},
\end{equation}
where we are now using the metric $g$. 

Now consider the case $k=1$, \textit{i.e.} assume $u\in W^p_{2;sc}$. Then 
$\phi_x^*|\nabla u|_h\in W^p_1(B_r)$. As before, we obtain
\begin{equation}\label{eq:scaled_okbis}
\left(\int_{B'_x}|\nabla u|_h^{p^*}\mbox{vol}_h\right)^{1/p^*}\leq C
\left(\int_{B'_x}(|\nabla u|_h^p+|d(|\nabla
u|_h)|_h^p)\,\mbox{vol}_h\right)^{1/p}.
\end{equation}
Notice that the Levi-Civita connections of $g$ and $h$ coincide. We can thus
apply Kato's inequality, finding $|d|\nabla
u|_h|_h\leq|\nabla^2u|_h=|\rho(x)^2\nabla^2 u|_g$. This leads to
\begin{equation}\label{eq:scaled_okter}
\left(\int_{B'_x}|\rho(x)\nabla
u|_g^{p^*}\rho(x)^{-m}\mbox{vol}_g\right)^{1/p^*}\leq C
\left(\int_{B'_x}(|\rho(x)\nabla u|_g^p+|\rho(x)^2\nabla^2
u|_g^p)\rho(x)^{-m}\mbox{vol}_g\right)^{1/p}.
\end{equation}
We can now proceed as before, using Assumption A3, to obtain
\begin{equation*}
\|\nabla u\|_{L^{p^*}_{sc}}\leq C\|\nabla u\|_{W^{p}_{1;sc}}.
\end{equation*}
Together with Equation \ref{eq:scaled_ok}, this implies
$W^p_{2;sc}\hookrightarrow W^{p^*}_{1;sc}$.

The other cases and parts of the Sobolev Embedding Theorems can be proved
analogously. 

The claim that $W^p_{k;sc}$ is a Banach algebra can be proved as in
Theorem \ref{thm:std_ok}, using Remark
\ref{rem:scalednorms} to write the scaled norms in terms of standard norms. In
this case the fact that the connection $\nabla$ is not a metric connection with
respect to the rescaled metric $\rho^{-2}g$ is not a problem: the proof only
uses the Leibniz rule (together with H\"older's inequality for $L^p$ norms and
the Sobolev Embedding Theorems which we have just proved).

The proof of the Sobolev Embedding Theorems for $(L,\hat{g},\rho)$ is similar.
For example, to prove Part I with $l=1$ and $k=0$ we locally define
$\hat{h}_y:=\rho^{-2}(x)\hat{g}_y$. Our assumption on $\hat{g}$ allows us to
substitute
$h$ with $\hat{h}$ in Equation \ref{eq:scaled_okzero}. The proof then continues
as before. Now consider the case $k=1$, \textit{i.e.} assume $u\in W^p_{2;sc}$
with respect to $\hat{g}$. Let $\hat{\nabla}$ denote the Levi-Civita connection
defined by $\hat{g}$. We can then study $\phi_x^*|\hat{\nabla}u|_{\hat{h}}$ as
before, obtaining the analogue of Equation \ref{eq:scaled_okbis} in terms of
$(\hat{h},\hat{\nabla})$ instead of $(h,\nabla)$. Since the Levi-Civita
connections
of $\hat{g}$ and $\hat{h}$ coincide we also obtain the analogue of Equation
\ref{eq:scaled_okter}. The proof then continues as before. 
\end{proof}

\begin{remark}
Compare the proof of Theorem \ref{thm:scaled_ok} with the ideas of Remark
\ref{rem:scalednorms}. The main issue raised in Remark \ref{rem:scalednorms}
concerned Kato's inequality for the rescaled metric $\rho^{-2}g$. In the proof
of the theorem this problem is solved by Assumption A3, which essentially allows
us to locally treat $\rho$ as a constant. Assumptions A1 and A2 are then similar
to the assumptions of Remark \ref{rem:scalednorms}.
\end{remark}

\begin{example}\label{e:radius} 
We now want to present two important examples of $(L,g,\rho)$ satisfying
Assumptions A1-A3 of Theorem \ref{thm:scaled_ok}.
\begin{enumerate}
\item Let $L$ be a smooth bounded domain in $\R^m$, endowed with the standard
metric
$\tilde{g}$. Given any $x\in L$ we
can define $\rho(x):=d(x,\partial L)$. This function satisfies Assumption A1
with $R_1=1$ and Assumption A2 with $R_2=0$. The triangle inequality shows that,
for all $y\in B(x,(1/2)\rho(x))$, $(1/2)\rho(x)\leq\rho(y)\leq(3/2)\rho(x)$.
This implies that Assumption A3 is also satisfied.
\item Given a compact oriented Riemannian manifold $(\Sigma,g')$, let
$L:=\Sigma\times (0,\infty)$ and $\tilde{g}:=dr^2+r^2g'$. Let $\theta$ denote
the generic point on $\Sigma$. There is a natural action
\begin{equation*}
\R^+\times L\rightarrow L,\ \ t\cdot(\theta,r):=(\theta,tr).
\end{equation*}
Given any $t\in\R^+$, it is simple to check that $t^*\tilde{g}=t^2\tilde{g}$.
For any $x\in L$, notice that $i_{tx}(\tilde{g})=i_x(t^*\tilde{g})$. We conclude
that $i_{tx}(\tilde{g})=ti_x(\tilde{g})$.  Analogously,
$Ric_{tx}(\tilde{g})=Ric_x(\tilde{g})$. It follows that, given any strictly
positive $f=f(\theta)$, the function $\rho(\theta,r):=rf(\theta)$
satisfies A1 and A2. It is simple to check that it also satisfies Assumption A3.
The simplest example is $f(\theta)\equiv 1$, \textit{i.e.}
$\rho(\theta,r)=r$. In Section \ref{s:manifolds} we will extend this example to the
category of ``conifolds''.
\end{enumerate} 
\end{example}

\begin{remark}\label{rem:tscaled_ok}
Since the norms $\|\cdot\|_{W^p_{k;sc}}$ are scale-invariant it is clear that
if the Sobolev Embedding Theorems hold for $(L,g,\rho)$ then they also hold
for $(L,t^2g,t\rho)$ with the same Sobolev constants. This is reflected in the
fact that Assumptions A1-A3 of Theorem \ref{thm:scaled_ok} are
scale-invariant.  
\end{remark}


\section{Weighted Sobolev spaces}\label{s:weighted}

In Section \ref{s:scaled} we mentioned that the finiteness condition
determined by the standard Sobolev norms is very
restrictive. This problem can be addressed by introducing a \textit{weight
function} $w=w(x)>0$ into the integrand. Coupling weights with scale functions
then produces very general and useful spaces, as follows. 

Let $(L,g)$ be a Riemannian manifold endowed with two positive
functions $\rho$ and $w$. Given any metric pair $(E,\nabla)$, the
\textit{weighted Sobolev spaces} are defined by
\begin{equation}\label{eq:weighted_sob}
W^p_{k;w}(E):=\mbox{Banach space completion of the space }
\left\{\sigma\in
C^\infty(E):\|\sigma\|_{W^p_{k;w}}<\infty\right\},
\end{equation}
where we use the norm
$\|\sigma\|_{W^p_{k;w}}:=\left(\Sigma_{j=0}
^k\int_L|w\rho^j\nabla^j\sigma|_g^p\rho^ {
-m }
\,\mbox{vol}_g\right)^{1/p}$.

We can also define the \textit{weighted spaces of $C^k$ sections}
\begin{equation}\label{eq:weighted_C^k}
C^k_w(E):=\left\{\sigma\in C^k(E): \|\sigma\|_{C^k_w}<\infty\right\},
\end{equation}
where we use the norm $\|\sigma\|_{C^k_w}:=\sum_{j=0}^k \mbox{sup}_{x\in
L}|w\rho^j\nabla^j\sigma|_g$. Once again, these norms define Banach spaces.

\begin{theorem}\label{thm:weighted_ok}
Let $(L,g)$ be a Riemannian manifold endowed with positive functions
$\rho$ and $w$. Assume $\rho$ satisfies the assumptions of Theorem
\ref{thm:scaled_ok} with respect to constants $R_1$, $R_2$, $R_3$ and $\zeta$.
Assume also that there exists a positive constant $R_4$ such that, $\forall x\in
L, \forall y\in B(x,\zeta\rho(x))$,
\begin{equation*}
(1/R_4)w(x)\leq w(y)\leq R_4 w(x).
\end{equation*}
Then all parts of the Sobolev Embedding Theorems hold for the weighted norms
defined by $(\rho,w)$ and for any metric pair $(E,\nabla)$.

Now let $\hat{g}$ be a second Riemannian metric on $L$ such that, for some
$C_0>0$, $(1/C_0)g\leq \hat{g}\leq C_0 g$. Then the weighted Sobolev Embedding
Theorems hold also for $(L,\hat{g},\rho,w)$ and for any metric pair
$(E,\nabla)$. The Sobolev constants of $\hat{g}$ depend only on the Sobolev
constants of $g$
and on $C_0$. 
\end{theorem}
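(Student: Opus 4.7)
The plan is to mimic the proof of Theorem~\ref{thm:scaled_ok}, exploiting the new hypothesis on $w$ as precisely the condition that lets one treat $w$ as a constant on each ball $B_x := B(x,\zeta\rho(x))$. Concretely, since $(1/R_4)w(x)\le w(y)\le R_4 w(x)$ on $B_x$, any occurrence of $w(y)^p$ inside an integral over a subset of $B_x$ may be replaced by $w(x)^p$ at the cost of a factor $R_4^p$. Since $w$ enters the weighted norm purely as a multiplicative factor and is never differentiated, this is all one needs.

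First, I would reduce to the simplest cases. The analogue of Proposition~\ref{prop:complexfromsimple} in the weighted setting goes through with essentially no change: Kato's inequality is a pointwise statement independent of scalings and weights, and the iteration plus H\"older arguments used there carry over. It therefore suffices to prove Part I for functions with $l=1$, $k=0$, and likewise for the exceptional case and Part II.

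Second, I would localize exactly as in Theorem~\ref{thm:scaled_ok}. Fix $x\in L$ and consider the rescaled metric $h := \rho(x)^{-2}g$ on $B_x$. That proof produced a local Sobolev inequality on $B'_x := B(x, \tfrac{1}{2}\zeta\rho(x))$ of the form
\begin{equation*}
\left(\int_{B'_x}|u|^{p^*}\,\mathrm{vol}_h\right)^{1/p^*} \le C\left(\int_{B'_x}(|u|^p+|du|_h^p)\,\mathrm{vol}_h\right)^{1/p},
\end{equation*}
with $C$ depending only on $R_1,R_2,R_3,\zeta,p,m$. Multiplying both sides by $w(x)$ and using the new hypothesis to swap $w(x)$ for $w(y)$ inside each integral (at the cost of powers of $R_4$), then converting back to the metric $g$ with its scaled volume element $\rho^{-m}\mathrm{vol}_g$ as in Remark~\ref{rem:scalednorms}, yields the local weighted Sobolev inequality.

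Third, the double-integration argument from Theorem~\ref{thm:scaled_ok} carries over verbatim: Fubini together with the uniform bounds on $\mathrm{vol}_g(B_x(r/2))$ and $\mathrm{vol}_g(B_x(2r))$ supplied by Assumption A3 converts these local inequalities into $\|u\|_{L^{p^*}_w}\le C\|u\|_{W^p_{1;w}}$. The higher-order and $C^k$ cases proceed analogously, with Kato's inequality applied to $|\nabla u|_h$ exactly as before; once again $w(x)$ rides along as a local constant. For the second claim, repeat the argument with $\hat h := \rho(x)^{-2}\hat g$ in place of $h$, using $(1/C_0)h\le\hat h\le C_0 h$ exactly as at the end of Theorem~\ref{thm:scaled_ok}. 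The only point requiring genuine care, which I regard as the main bookkeeping obstacle rather than a serious analytic one, is ensuring that the constants absorbed when swapping $w(x)\leftrightarrow w(y)$ are tracked consistently through the higher-order steps; but since the new hypothesis on $w$ is exactly parallel in form to Assumption A3 on $\rho$, this amounts to inserting a single additional factor $R_4$ at each such step.
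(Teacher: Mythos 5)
Your proposal is correct and follows essentially the same route as the paper: the paper's proof also multiplies the local scaled inequalities (Equations \ref{eq:scaled_okzero} and \ref{eq:scaled_okter}) by $w(x)$ and then uses the hypothesis on $w$ to replace the locally constant $w(x)$ by $w(y)$ up to a change of constants, before running the same double-integration argument. No gaps.
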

\begin{proof}
The proof is a small modification of the proof of Theorem \ref{thm:scaled_ok}:
one needs simply to take into account the weights by multiplying Equations
\ref{eq:scaled_okzero} and \ref{eq:scaled_okter} by $w(x)$. The assumption on
$w$ allows us, up to a change of constants, to replace the (locally) constant
quantity $w(x)$ with the function $w(y)$.
\end{proof}

\begin{remark}\label{rem:tweighted_ok}
 Choose any constant $\beta\in\R$. Define \textit{rescaling} to be an
action of $\R^+$ on 
$(L,g,\rho,w)$, via $t\cdot(L,g,\rho,w):=(L,t^2g, t\rho,t^\beta w)$. Then
$\|\sigma\|_{W^p_{k;w}}$, calculated with
respect to $t\cdot(L,g,\rho,w)$, coincides with
$t^\beta\|\sigma\|_{W^p_{k;w}}$,
calculated with respect to $(L,g,\rho,w)$: this shows that these weighted norms
are in general not invariant under rescaling. However, if the Sobolev Embedding
Theorems hold for $(L,g,\rho,w)$ then, multiplying by the factor $t^\beta$, we
see that they hold for $(L,t^2g, t\rho,t^\beta w)$ with the same Sobolev
constant. This is reflected in the fact that the hypotheses of Theorem
\ref{thm:weighted_ok} are $t$-invariant.
\end{remark}


\section{Application: manifolds with ends modelled on cones and cylinders}\label{s:manifolds}

We now introduce the category of ``conifolds''. These Riemannian manifolds are a well-known example for the theory of weighted Sobolev spaces. They will also provide a useful framework for our study of desingularizations. It will also be useful to define  the
analogous ``cylindrical'' category, both for its affinities to conifolds and as
a tool for studying them.

\begin{definition}\label{def:manifold_ends}
Let $L^m$ be a smooth manifold. We say $L$ is a \textit{manifold with ends} if
it satisfies the following conditions:
\begin{enumerate}
\item We are given a compact subset $K\subset L$ such that $S:=L\setminus K$ has
a finite number of connected components $S_1,\dots,S_e$, \textit{i.e.}
$S=\amalg_{i=1}^e S_i$.
\item For each $S_i$ we are given a connected ($m-1$)-dimensional compact
manifold $\Sigma_i$ without boundary. 
\item There exist diffeomorphisms $\phi_i:\Sigma_i\times [1,\infty)\rightarrow
\overline{S_i}$.
\end{enumerate}
We then call the components $S_i$ the \textit{ends} of $L$ and the manifolds
$\Sigma_i$ the \textit{links} of $L$. We denote by
$\Sigma$ the union of the links of $L$. 
\end{definition}

\begin{definition}\label{def:metrics_ends}
Let L be a manifold with ends. Let $g$ be a Riemannian metric on $L$. Choose an
end $S_i$ with corresponding link $\Sigma_i$.

We say that $S_i$ is a \textit{conically singular} (CS) end if the following
conditions hold:
\begin{enumerate}
\item $\Sigma_i$ is endowed with a Riemannian metric $g_i'$.

We then let $(\theta,r)$ denote the generic point on the product manifold
$C_i:=\Sigma_i\times (0,\infty)$ and $\tg_i:=dr^2+r^2g_i'$ denote the
corresponding \textit{conical metric} on $C_i$.
\item There exist a constant $\nu_i>0$ and a diffeomorphism
$\phi_i:\Sigma_i\times (0,\epsilon]\rightarrow \overline{S_i}$ such that, as
$r\rightarrow
0$ and for all $k\geq 0$,
$$|\tnabla^k(\phi_i^*g-\tg_i)|_{\tg_i}=O(r^{\nu_i-k}),$$
where $\tnabla$ is the Levi-Civita connection on $C_i$ defined by $\tg_i$. 
\end{enumerate}
We say that $S_i$ is an \textit{asymptotically conical} (AC) end if the
following conditions hold:
\begin{enumerate}
\item $\Sigma_i$ is endowed with a Riemannian metric $g_i'$.

We again let $(\theta,r)$ denote the generic point on the product manifold
$C_i:=\Sigma_i\times (0,\infty)$ and $\tg_i:=dr^2+r^2g_i'$ denote the
corresponding \textit{conical metric} on $C_i$.
\item There exist a constant $\nu_i<0$ and a diffeomorphism
$\phi_i:\Sigma_i\times [R,\infty)\rightarrow \overline{S_i}$ such that, as
$r\rightarrow
\infty$ and for all $k\geq 0$,
$$|\tnabla^k(\phi_i^*g-\tg_i)|_{\tg_i}=O(r^{\nu_i-k}),$$
where $\tnabla$ is the Levi-Civita connection on $C_i$ defined by $\tg_i$.
\end{enumerate}
In either of the above situations we call $\nu_i$ the \textit{convergence rate}
of
$S_i$.
\end{definition}

\begin{remark}\label{rem:metrics_ends}
Let $(L,g)$ be a manifold with ends. Assume $S_i$ is an
AC end as in Definition \ref{def:metrics_ends}. Using the
notation of Remark \ref{rem:scalednorms} we can rewrite this condition as
follows: for all $k\geq 0$,
$$|\tnabla^k(\phi_i^*g-\tg_i)|_{r^{-2}\tg_i\otimes \tg_i}=O(r^{\nu_i}).$$
In particular there exist constants $C_k>0$ such that
$$|\tnabla^k(\phi_i^*g-\tg_i)|_{r^{-2}\tg_i\otimes \tg_i}\leq C_kR^{\nu_i}.$$
By making $R$ larger if necessary, we can assume $C_0R^{\nu_i}$ is small.
This implies that $\phi_i^*g$ and $\tilde{g}_i$
are scaled-equivalent in the sense of Definition
\ref{def:equivalentscaledmetrics}, cf. Remark \ref{rem:scaledequivalence}. The above conditions are stable under duality and tensor products so one can prove 
that, for any tensor $\sigma$ on $L$ and as $r\rightarrow\infty$,
\begin{equation*}
 |\sigma|_{\phi_i^*g}=|\sigma|_{\tg_i}\left(1+O(r^{\nu_i})\right).
\end{equation*}
If $\sigma=df$ for some function $f$ on $L$, we can multiply both sides by $r$ to obtain an analogous estimate in terms of the rescaled metrics:
\begin{equation*}
 |df|_{r^{-2}\phi_i^*g}=|df|_{r^{-2}\tg_i}\left(1+O(r^{\nu_i})\right).
\end{equation*}
Furthermore, let $A:=\nabla-\tnabla$ denote the difference of the two
connections defined by
$\phi_i^*g$ and $\tg_i$. Then, as in Remark \ref{rem:equivalence},
Definition \ref{def:metrics_ends} implies that $|A|_{\tg_i}=O(r^{\nu_i-1})$.
This leads to
\begin{align*}
 |\nabla^2f|_{\phi_i^*g}&=|\tnabla^2f|_{\tg_i}\left(1+O(r^{\nu_i})\right)+
|df|_{\tg_i}O(r^{\nu_i-1}),\\
|\mbox{tr}_{\phi_i^*g}\nabla^2
f|&=|\mbox{tr}_{\tg_i}\tnabla^2f|\left(1+O(r^{\nu_i})\right)
+|df|_{\tg_i} O(r^{\nu_i-1}).
\end{align*}
Multiplying these equations by $r^2$ we can re-write them as
\begin{align*}
 |\nabla^2f|_{r^{-2}\phi_i^*g}&=|\tnabla^2f|_{r^{
-2}\tg_i}+O(r^{\nu_i})\left(|\tnabla^2f|_{r^{
-2}\tg_i}+|df|_{r^{-2}\tg_i}\right),\\
|r^2\Delta_{\phi_i^*g}f|&=|r^2\Delta_{\tg_i}f|+O(r^{\nu_i})\left(|r^2\Delta_{\tg_i}f|+|df|_{r^{-2}\tg_i}\right).
\end{align*}
Analogous comments apply to higher derivatives and to CS ends.
\end{remark}

\begin{definition}
Let $(L,g)$ be a manifold with ends endowed with a Riemannian metric. We say
that $L$ is a \textit{CS} (respectively, \textit{AC})
manifold if all ends are conically singular (respectively, asymptotically
conical). We say that $L$ is a \textit{CS/AC}
manifold if all ends are either conically singular or asymptotically conical. We
use the generic term \textit{conifold} to indicate any CS, AC or CS/AC manifold.

When working with a CS/AC manifold we will often index the CS (``small") ends
with numbers $\{1,\dots,s\}$ and the AC (``large") ends with numbers
$\{1,\dots,l\}$. Furthermore we will denote the union of the CS links
(respectively, of the CS ends) by $\Sigma_0$ (respectively, $S_0$) and those
corresponding to the AC links and ends by $\Sigma_\infty$, $S_\infty$. 
\end{definition}

\begin{remark}\label{rem:cpt_conifolds}
It is useful to include smooth compact manifolds in the category of conifolds: they are precisely those for which the set of ends is empty.
\end{remark}

We now need to choose which function spaces to work with on conifolds. It
turns out that the most useful
classes of function spaces are precisely those of Section \ref{s:weighted}. One
needs only to choose appropriate functions $\rho$ and $w$ satisfying the
assumptions of Theorem \ref{thm:weighted_ok}, as follows.

Regarding notation, given a vector
$\boldsymbol{\beta}=(\beta_1,\dots,\beta_e)\in \R^e$ and $j\in\N$ we set
$\boldsymbol{\beta}+j:=(\beta_1+j,\dots,\beta_e+j)$. We write
$\boldsymbol{\beta}\geq\hat{\boldsymbol{\beta}}$ iff $\beta_i\geq\hat{\beta_i}$ for all $i=1,\dots, e$.

\begin{definition}\label{def:csac_sectionspaces}
Let $L$ be a conifold with metric $g$. We say that a smooth function
$\rho:L\rightarrow (0,\infty)$ is a \textit{radius function} if
$\phi_i^*\rho=r$, where $\phi_i$ are the
diffeomorphisms of Definition \ref{def:metrics_ends}. Given any vector
$\boldsymbol{\beta}=(\beta_1,\dots,\beta_e)\in\R^e$, choose a function
$\boldsymbol{\beta}:L\rightarrow \R$ which, on each end $S_i$, restricts to the
constant
$\beta_i$.  Then $\rho$ and $w:=\rho^{-\beta}$ satisfy the assumptions of
Theorem \ref{thm:weighted_ok}, cf. Example \ref{e:radius}. We call
$(L,g,\rho,\boldsymbol{\beta})$ a \textit{weighted conifold}.

Given any metric pair $(E,\nabla)$ we define weighted spaces
$C^k_{\boldsymbol{\beta}}(E)$ and $W^p_{k,\boldsymbol{\beta}}(E)$ as in Section
\ref{s:weighted}. We can equivalently define the space
$C^k_{\boldsymbol{\beta}}(E)$ to be the space of sections $\sigma\in C^k(E)$
such that $|\nabla^j \sigma|=O(r^{\boldsymbol{\beta}-j})$ as $r\rightarrow 0$
(respectively, $r\rightarrow\infty$) along each CS (respectively, AC) end.

In the case of a CS/AC manifold we will often separate the CS and AC weights,
writing $\boldsymbol{\beta}=(\boldsymbol{\mu},\boldsymbol{\lambda})$ for some
$\boldsymbol{\mu}\in \R^s$ and some $\boldsymbol{\lambda}\in \R^l$. We then
write $C^k_{(\boldsymbol{\mu},\boldsymbol{\lambda})}(E)$ and
$W^p_{k,(\boldsymbol{\mu},\boldsymbol{\lambda})}(E)$.
\end{definition}

One can extend to these weighted spaces many results valid for standard Sobolev
spaces. H\"{o}lder's inequality is one example.

\begin{lemma}[Weighted H\"{o}lder's inequality] \label{l:weightedhoelder}
Let $(L,g)$ be a conifold.
Then, for all $p> 1$ and
$\boldsymbol{\beta}=\boldsymbol{\beta_1}+\boldsymbol{\beta_2}$, 
\begin{equation*}
\|uv\|_{L^1_{\boldsymbol{\beta}}}\leq
\|u\|_{L^p_{\boldsymbol{\beta_1}}}\cdot\|v\|_{L^{p'}_{\boldsymbol{\beta_2}}}.
\end{equation*}
More generally, assume $\frac{1}{q}=\frac{1}{q_1}+\frac{1}{q_2}$. Then 
\begin{equation*}
\|uv\|_{L^q_{\boldsymbol{\beta}}}\leq
\|u\|_{L^{q_1}_{\boldsymbol{\beta_1}}}\cdot\|v\|_{L^{q_2}_{\boldsymbol{\beta_2}}}.
\end{equation*}
\end{lemma}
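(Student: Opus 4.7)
The plan is to reduce the weighted inequality to the classical (unweighted) Hölder inequality by cleverly redistributing both the weight $\rho^{-\boldsymbol{\beta}}$ and the volume factor $\rho^{-m}$ between the two factors. Recall that by Definition \ref{def:csac_sectionspaces}, the weighted $L^q$ norm unfolds as
\begin{equation*}
\|\sigma\|_{L^q_{\boldsymbol{\beta}}}^q = \int_L |\sigma|^q \, \rho^{-\boldsymbol{\beta}q - m}\,\mbox{vol}_g,
\end{equation*}
where $\boldsymbol{\beta}$ is interpreted as the function which restricts to $\beta_i$ on the $i$-th end. I will prove the general statement; the first inequality is just the case $q=1$, $q_1=p$, $q_2=p'$.

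First I would rewrite the integrand of $\|uv\|_{L^q_{\boldsymbol{\beta}}}^q$ as a product of two suitably chosen factors. Since $\boldsymbol{\beta}=\boldsymbol{\beta_1}+\boldsymbol{\beta_2}$ and $\tfrac{1}{q_1}+\tfrac{1}{q_2}=\tfrac{1}{q}$, I can split the exponent of $\rho$ as
\begin{equation*}
-\boldsymbol{\beta}q - m = \bigl(-\boldsymbol{\beta_1}q - \tfrac{m q}{q_1}\bigr) + \bigl(-\boldsymbol{\beta_2}q - \tfrac{m q}{q_2}\bigr),
\end{equation*}
which leads to the factorization
\begin{equation*}
|uv|^q \rho^{-\boldsymbol{\beta}q - m} = \bigl(|u| \rho^{-\boldsymbol{\beta_1} - m/q_1}\bigr)^q \bigl(|v| \rho^{-\boldsymbol{\beta_2} - m/q_2}\bigr)^q.
\end{equation*}

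Next I apply the classical Hölder inequality on $(L,\mbox{vol}_g)$ to these two factors, using the conjugate exponents $q_1/q$ and $q_2/q$ (which satisfy $q/q_1 + q/q_2 = 1$ by hypothesis). This yields
\begin{equation*}
\int_L |uv|^q \rho^{-\boldsymbol{\beta}q - m}\,\mbox{vol}_g \leq \left(\int_L |u|^{q_1} \rho^{-\boldsymbol{\beta_1}q_1 - m}\,\mbox{vol}_g\right)^{q/q_1} \left(\int_L |v|^{q_2} \rho^{-\boldsymbol{\beta_2}q_2 - m}\,\mbox{vol}_g\right)^{q/q_2},
\end{equation*}
i.e. $\|uv\|_{L^q_{\boldsymbol{\beta}}}^q \leq \|u\|_{L^{q_1}_{\boldsymbol{\beta_1}}}^q \|v\|_{L^{q_2}_{\boldsymbol{\beta_2}}}^q$. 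Taking the $q$-th root concludes the proof. There is no real obstacle here: the content of the lemma is entirely absorbed by the algebraic identity matching the weight bookkeeping with the Hölder exponents, and the classical Hölder inequality does the rest.
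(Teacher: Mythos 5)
Your proof is correct and follows essentially the same route as the paper: split the weight $\rho^{-\boldsymbol{\beta}}$ and the volume factor $\rho^{-m}$ between the two factors according to the Hölder exponents, then apply the classical Hölder inequality. The paper writes out only the case $q=1$, $q_1=p$, $q_2=p'$ and notes the general case is similar; you have simply carried out that general case explicitly, with the exponent bookkeeping done correctly.
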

\begin{proof}
\begin{align*}
\|uv\|_{L^1_{\boldsymbol{\beta}}}
&=\int_L(\rho^{-\boldsymbol{\beta_1}}u\rho^{-m/p})(\rho^{-\boldsymbol{\beta_2}}
v\rho^ { -m/p' } )\, \mbox {vol}_g\\
&\leq
\|\rho^{-\boldsymbol{\beta_1}}u\rho^{-m/p}\|_{L^p}\cdot\|\rho^{-\boldsymbol{\beta_2}}
v\rho^ { -m/p' } \|_ { L^ { p' } } \\
&=\|u\|_{L^p_{\boldsymbol{\beta_1}}}\cdot\|v\|_{L^{p'}_{\boldsymbol{\beta_2}}}.
\end{align*}
The general case is similar.
\end{proof}

\begin{corollary}\label{cor:embedding}
 Let $(L,g,\boldsymbol{\beta})$ be a weighted conifold. Then
all parts of the weighted Sobolev Embedding Theorems hold for any metric pair
$(E,\nabla)$.

Furthermore, assume $kp>m$. Then the corresponding weighted
Sobolev spaces are closed under multiplication, in the following sense. For
any $\boldsymbol{\beta}_1$ and $\boldsymbol{\beta_2}$ there exists $C>0$ such
that, for all $u\in W^p_{k,\boldsymbol{\beta_1}}$ and $v\in
W^p_{k,\boldsymbol{\beta_2}}$,
\begin{equation*}
\|uv\|_{W^p_{k,\boldsymbol{\beta_1}+\boldsymbol{\beta_2}}}\leq
C\|u\|_{W^p_{k,\boldsymbol{\beta_1}}}\cdot\|v\|_{W^p_{k,\boldsymbol{\beta_2}}}.
\end{equation*}
\end{corollary}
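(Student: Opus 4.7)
The plan is to derive Part 1 by a direct application of Theorem \ref{thm:weighted_ok} to $(L,g,\rho,w)$ with $w := \rho^{-\boldsymbol{\beta}}$, and to derive Part 2 by imitating the Banach-algebra argument in Theorem \ref{thm:std_ok} in the weighted setting, using the embeddings from Part 1 to control pointwise norms of enough derivatives of $u$ and $v$.

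For Part 1, I would verify that Assumptions A1--A3 of Theorem \ref{thm:scaled_ok} and the local-variation bound on $w$ required by Theorem \ref{thm:weighted_ok} all hold globally on $(L,g,\rho,w)$ with uniform constants. On each end, Example \ref{e:radius}(2) supplies A1--A3 for $(\Sigma_i \times (0,\infty), \tilde{g}_i, r)$; Remark \ref{rem:metrics_ends} then shows that $\phi_i^*g$ is $C^0$-close to $\tilde{g}_i$ once the CS neighborhoods are shrunk or the AC neighborhoods enlarged, so the assumptions transfer to $g$ up to a controlled change in constants. On the compact core $K$, $\rho$ and $w$ are bounded above and below and $i(g)$, $Ric(g)$ are controlled by compactness, making A1--A3 automatic. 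Choosing $\zeta$ small enough that $B(x,\zeta\rho(x))$ never crosses the interface between core and end, and taking the worst of the finitely many local constants, yields global uniform constants; Theorem \ref{thm:weighted_ok} then delivers all parts of the weighted Sobolev Embedding Theorems.

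For Part 2, I would follow the template sketched at the end of the proof of Theorem \ref{thm:std_ok}. Writing
\begin{equation*}
\rho^j \nabla^j(uv) \;=\; \sum_{i=0}^{j} \binom{j}{i}\,(\rho^i \nabla^i u) \otimes (\rho^{j-i} \nabla^{j-i} v)
\end{equation*}
and splitting the weight as $\rho^{-\boldsymbol{\beta_1}-\boldsymbol{\beta_2}} = \rho^{-\boldsymbol{\beta_1}}\rho^{-\boldsymbol{\beta_2}}$, each summand is a product whose $L^p_{\boldsymbol{\beta_1}+\boldsymbol{\beta_2}}$-norm I would bound via the weighted H\"older inequality (Lemma \ref{l:weightedhoelder}), adjusting the exponents to the factor that can be placed in $L^\infty$: when $j-i \le k$, put $v$ in $L^\infty$, using Part 1 to embed $W^p_{j-i+l,\boldsymbol{\beta_2}} \hookrightarrow C^{j-i}_{\boldsymbol{\beta_2}}$ (available since $lp > m$); when $i \le k$, swap the roles; and in the borderline range where both $i > k$ and $j-i > k$ (which can only occur if $l > k$), use finite H\"older exponents together with the weighted Sobolev embeddings of Part I from Part 1 applied to $\nabla^i u$ and $\nabla^{j-i} v$ in the appropriate weighted $L^q$ spaces. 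Each resulting bound is $\le C\|u\|_{W^p_{k+l,\boldsymbol{\beta_1}}}\|v\|_{W^p_{k+l,\boldsymbol{\beta_2}}}$, and summing over the finitely many $(j,i)$ gives the claimed inequality. The main obstacle is the bookkeeping in this second part: matching the weighted H\"older exponents to the full family of weighted Sobolev embeddings produced by Part 1 requires a small case analysis that is entirely routine but not a one-line argument.
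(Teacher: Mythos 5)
Your proposal follows essentially the same route as the paper: verify the hypotheses of Theorem \ref{thm:weighted_ok} using Example \ref{e:radius}(2) on the ends and compactness of $K$ on the core, then obtain the multiplication estimate from the Leibniz rule, the weighted H\"older inequality (Lemma \ref{l:weightedhoelder}) and the embeddings just established. The one step to tighten is the claim that, because $\phi_i^*g$ is $C^0$-close to $\tilde{g}_i$, ``the assumptions transfer to $g$'': Assumptions A1--A2 of Theorem \ref{thm:scaled_ok} concern the injectivity radius and Ricci curvature, and these are not controlled by $C^0$-closeness of metrics alone. The paper avoids this by verifying A1--A3 for a reference metric $\hat{g}$ that is \emph{exactly} conical on each end, and then invoking the final clause of Theorem \ref{thm:weighted_ok} (the statement for a second metric satisfying only $(1/C_0)g\leq\hat{g}\leq C_0 g$), which is precisely designed to absorb the $C^0$-perturbation inside the local coordinate charts. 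If you instead insist on transferring A1--A2 to $g$ directly, you need the full scaled-equivalence of Remark \ref{rem:metrics_ends} (bounds on all derivatives), and even then the injectivity-radius transfer is not a one-liner; routing through the perturbation clause of Theorem \ref{thm:weighted_ok} is the cleaner and intended argument. The Banach-algebra part is fine and matches the paper's (sketched) proof.
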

\begin{proof}
Let $(L,g)$ be a conifold. Write $L=K\cup S$ as in Definition
\ref{def:manifold_ends} and let $C_i$ denote
the cone corresponding to the end $S_i$. Example \ref{e:radius}
showed that the assumptions for the scaled Sobolev Embedding Theorems hold for
$(C_i,\tilde{g}_i,r)$. The same is true for the weighted Sobolev Embedding
Theorems. Using the compactness of $K$ we conclude that these
assumptions, thus the theorems, hold for $L$ with respect to any metric
$\hat{g}$ such that $\phi_i^*\hat{g}=\tg_i$ on each end. As in Remark
\ref{rem:metrics_ends} one can assume that
$\phi_i^*g$ and $\tilde{g}_i$
are scaled-equivalent so there exists $C_0>0$ such that $(1/C_0)\tg_i\leq
\phi_i^*g\leq C_0\tg_i$. Again using the compactness of $K$ we may thus assume
that $(1/C_0)\hat{g}\leq g\leq C_0\hat{g}$. Theorem \ref{thm:weighted_ok} now
shows that the weighted Sobolev Embedding Theorems hold for
$(L,g)$. The fact that weighted Sobolev spaces
are closed with respect to products can
be proved as in Theorem \ref{thm:scaled_ok}, using Lemma
\ref{l:weightedhoelder}.
\end{proof}

\begin{remark}\label{rem:embedding}
Let $(L,g)$ be an AC manifold. Notice that for
$\hat{\boldsymbol{\beta}}\geq\boldsymbol{\beta}$ there exist continuous
embeddings
$W^r_{k,\boldsymbol{\beta}}\hookrightarrow W^r_{k,\hat{\boldsymbol{\beta}}}$.
The analogous statement is true for the weighted $C^k$ spaces. By composition
Corollary \ref{cor:embedding} thus leads to the following statements:
\begin{enumerate}
\item If $lp<m$ then there exists a continuous embedding
$W^p_{k+l,\boldsymbol{\beta}}(E)\hookrightarrow
W^{p^*_l}_{k,\hat{\boldsymbol{\beta}}}(E)$.
\item If $lp=m$ then, for all $q\in [p,\infty)$, there exist continuous
embeddings $W^p_{k+l,\boldsymbol{\beta}}(E)\hookrightarrow
W^q_{k,\hat{\boldsymbol{\beta}}}(E)$.
\item If $lp>m$ then there exists a continuous embedding
$W^p_{k+l,\boldsymbol{\beta}}(E)\hookrightarrow
C^k_{\hat{\boldsymbol{\beta}}}(E)$. 
\end{enumerate}
Notice that if $(L,g)$ is a CS
manifold then the behaviour on the ends is studied in terms of $r\rightarrow 0$
rather than $r\rightarrow \infty$. In this case the same conclusions hold for
the opposite situation $\hat{\boldsymbol{\beta}}\leq\boldsymbol{\beta}$.
Finally, let $(L,g)$ be a CS/AC manifold with
$\boldsymbol{\beta}=(\boldsymbol{\mu},\boldsymbol{\lambda})$. Then the same
conclusions hold for all
$\hat{\boldsymbol{\beta}}=(\hat{\boldsymbol{\mu}},\hat{\boldsymbol{\lambda}})$
with
$\hat{\boldsymbol{\mu}}\leq\boldsymbol{\mu}$,
$\hat{\boldsymbol{\lambda}}\geq\boldsymbol{\lambda}$. 
\end{remark}

We now want to show that all the above notions and results are
scale-independent, as long as we rescale the weight function
correctly to take into account the possibility of variable weights. We start by examining the properties of $(L,t^2g)$.

\begin{lemma}\label{l:rescaledconifold}
 Let $(L,g)$ be a conifold. For each AC end $S_i$ let
$\phi_i:\Sigma_i\times [R,\infty)\rightarrow \overline{S_i}$ denote the
diffeomorphism of Definition \ref{def:metrics_ends}. In particular, for all
$k\geq 0$ there exist $C_k>0$ such that, for $r\geq R$,
$$|\tnabla^k(\phi_i^*g-\tg_i)|_{r^{-2}\tg_i\otimes\tg_i}\leq C_kr^{\nu_i}\leq
C_kR^{\nu_i}.$$
As seen in Remark \ref{rem:metrics_ends}, we can thus assume that
$\phi_i^*g$, $\tg_i$ are scaled-equivalent.

Choose any $t>0$. Define the diffeomorphism
\begin{equation*}
 \phi_{t,i}:\Sigma_i\times [tR,\infty)\rightarrow \overline{S_i},\ \
\phi_{t,i}(\theta,r):=\phi_i(\theta,r/t).
\end{equation*}
Then, for $r\geq tR$ and with respect to the same $C_k$,
there are $t$-uniform estimates
$$|\tnabla^k(\phi_{t,i}^*(t^2g)-\tg_i)|_{r^{-2}\tg_i\otimes \tg_i}\leq
C_k(r/t)^{\nu_i}\leq
C_kR^{\nu_i}.$$
Analogously, for each CS end $S_i$ let $\phi_i$ denote the
diffeomorphism of Definition \ref{def:metrics_ends}. Define the diffeomorphism
\begin{equation*}
 \phi_{t,i}:\Sigma_i\times (0,t\epsilon]\rightarrow \overline{S_i},\ \
\phi_{t,i}(\theta,r):=\phi_i(\theta,r/t).
\end{equation*}
Then there are $t$-uniform estimates as above.

In particular, with respect to these diffeomorphisms, $(L,t^2g)$ is again a
conifold. If $\rho$ is a radius function for $(L,g)$ then $t\rho$ is a radius
function for $(L,t^2g)$.
\end{lemma}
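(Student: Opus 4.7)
The plan is to reduce everything to the single rescaling identity on the model cone. Writing $\psi_t(\theta,r):=(\theta,r/t)$, so that $\phi_{t,i}=\phi_i\circ\psi_t$, the direct computation
$$\psi_t^*\tg_i=\psi_t^*(dr^2+r^2g_i')=t^{-2}dr^2+t^{-2}r^2 g_i'=t^{-2}\tg_i$$
is the workhorse of the proof. From it one gets $\phi_{t,i}^*(t^2 g)=t^2\psi_t^*\phi_i^*g$ and $\tg_i=t^2\psi_t^*\tg_i$, hence, setting $h:=\phi_i^*g-\tg_i$,
$$\phi_{t,i}^*(t^2 g)-\tg_i=t^2\psi_t^*h.$$

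Next I would observe that $\tg_i$ and $\psi_t^*\tg_i=t^{-2}\tg_i$ differ by a constant conformal factor, so they share the same Christoffel symbols and the same Levi-Civita connection $\tnabla$. Equivalently, $\psi_t^*\tnabla=\tnabla$, so covariant derivatives commute with $\psi_t^*$ and
$$\tnabla^k\bigl(t^2\psi_t^*h\bigr)=t^2\psi_t^*\bigl(\tnabla^k h\bigr).$$
The third and main analytic step is to track how this tensor is measured in the scaled norm $|\cdot|_{r^{-2}\tg_i\otimes\tg_i}$. Because $\psi_t\colon(C_i,\tg_i)\to(C_i,t^2\tg_i)$ is an isometry — or, equivalently, it sends $\tg_i$-orthonormal frames to $\tg_i$-orthonormal frames rescaled by $1/t$ — one checks that for any $(0,n)$-tensor $T$,
$$|\psi_t^*T|_{\tg_i}(\theta,r)=t^{-n}|T|_{\tg_i}(\theta,r/t).$$
Applied to $T=\tnabla^k h$, which has valence $n=k+2$, and combined with $|\cdot|_{r^{-2}\tg_i\otimes\tg_i}=r^k|\cdot|_{\tg_i}$, one gets the clean identity
$$\bigl|t^2\psi_t^*(\tnabla^k h)\bigr|_{r^{-2}\tg_i\otimes\tg_i}(\theta,r)=\bigl|\tnabla^k h\bigr|_{r^{-2}\tg_i\otimes\tg_i}(\theta,r/t),$$
which is nothing but the scale-invariance of scaled norms under the rescaling $(g,\rho)\mapsto(t^2 g,t\rho)$ from Remark \ref{rem:scalednormsbis}, specialized to this cone setting. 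For $r\geq tR$ one has $r/t\geq R$, so the right-hand side is bounded by $C_k(r/t)^{\nu_i}\leq C_k R^{\nu_i}$ (using $\nu_i<0$ for AC ends); the CS case is identical with $\nu_i>0$ and $r/t\leq\epsilon$. With the same $C_k$ as for $(L,g)$, this is precisely the claim. The radius-function statement is immediate: $\phi_{t,i}^*(t\rho)=t\cdot\psi_t^*\phi_i^*\rho=t\cdot\psi_t^*r=t\cdot(r/t)=r$.

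The step I expect to be the main obstacle is purely bookkeeping, not analysis: one must carefully verify the scaling behaviour of the \emph{mixed} norm $|\cdot|_{r^{-2}\tg_i\otimes\tg_i}$ under $\psi_t^*$, keeping straight that $r^{-2}\tg_i$ applies only to the $k$ derivative slots while $\tg_i$ applies to the two tensor slots of $h$. There is no substantive analytic difficulty — once the identity $\psi_t^*\tg_i=t^{-2}\tg_i$ is noted, the entire statement reduces to the observation that the category of weighted conifolds was designed so that the diffeomorphism $\psi_t$ implements the rescaling $(L,g,\rho)\mapsto(L,t^2 g,t\rho)$ at the level of the models.
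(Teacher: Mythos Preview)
Your argument is correct and is essentially the paper's own proof. The paper introduces $\delta_t(\theta,r):=(\theta,tr)$ so that $\phi_{t,i}=\phi_i\circ\delta_{1/t}$ (your $\psi_t$ is exactly $\delta_{1/t}$), uses $\delta_{1/t}^*(t^2\tg_i)=\tg_i$, and then tracks the norm $|\cdot|_{\tg_i\otimes\tg_i}$ through the pullback before multiplying by $r^k$ at the end; you instead carry the mixed norm $|\cdot|_{r^{-2}\tg_i\otimes\tg_i}$ throughout and derive the clean identity at the point $(\theta,r/t)$ directly --- a purely presentational difference.
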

\begin{proof}
Define the map
$$\delta_t:\Sigma_i\times \R^+\rightarrow \Sigma_i\times\R^+,\ \
(\theta,r)\mapsto (\theta,tr).$$
Since $\delta_t$ is simply a rescaling it preserves the Levi-Civita connection
$\tilde{\nabla}$. Notice that
$\phi_{t,i}=\phi_i\circ\delta_{1/t}$. It is simple to check that
$\delta_{1/t}^*(t^2\tg_i)=\tg_i$. Thus, for $r\geq tR$,
\begin{align*}
|\tnabla^k(\phi_{t,i}^*(t^2g)-\tg_i)|_{\tg_i\otimes\tg_i}
&=|\tnabla^k(\delta_{1/t}^*\phi_i^*(t^2g)-\tg_i)|_{\tg_i\otimes\tg_i}\\
&=\delta_{1/t}^*\left(|\tnabla^k(\phi_i^*(t^2g)-t^2\tg_i)|_{t^2\tg_i\otimes t^2\tg_i}\right)\\
&=\delta_{1/t}^*\left(|\tnabla^k(\phi_i^*g-\tg_i)|_{t^2\tg_i\otimes \tg_i}\right)\\
&\leq t^{-k}C_k(r/t)^{\nu_i-k} = C_k(r/t)^{\nu_i}r^{-k},
\end{align*}
where in the last line the factor $t^{-k}$ comes from measuring $\tilde{\nabla}^k$ using $t^2\tilde{g}_i$, cf. Remark \ref{rem:scalednorms}.
These inequalities can be rescaled as in Remark \ref{rem:metrics_ends} to
obtain the desired $t$-uniform estimates.

Now notice that 
$$\phi_{t,i}^*(t\rho)_{|(\theta,r)}=t\rho\circ\phi_{t,i}(\theta,
r)=t\rho\circ\phi_i(\theta, r/t)=tr/t=r, $$
so $t\rho$ is a radius function in the sense of Definition
\ref{def:csac_sectionspaces}.
CS ends can be studied analogously.
\end{proof}

The following result is a direct consequence of Theorem \ref{thm:weighted_ok}
and Remark \ref{rem:tweighted_ok}.
\begin{corollary}\label{cor:rescaledconifold}
Let $(L,g)$ be a conifold. Then,
for all $t>0$: 
\begin{enumerate}
\item Choose a constant weight
$\boldsymbol{\beta}$. Define weighted Sobolev spaces
$W^p_{k,\boldsymbol{\beta}}$ as in Section \ref{s:weighted} using the metric
$t^2g$, the scale function $t\rho$ and the weight function
$w:=(t\rho)^{-\boldsymbol{\beta}}$. Then 
all forms of the weighted Sobolev
Theorems hold for
$(L,t^2g,t\rho,(t\rho)^{-\boldsymbol{\beta}})$ with $t$-independent Sobolev
constants. 
\item More generally, let $\boldsymbol{\beta}$ be a function as
in Definition \ref{def:csac_sectionspaces}. Choose a constant ``reference''
weight $\boldsymbol{\beta}'$ and define weighted Sobolev spaces
$W^p_{k,\boldsymbol{\beta}}$ as in Section \ref{s:weighted} using the metric
$t^2g$, the scale function $t\rho$ and the weight function
$w_t:=(t^\frac{\boldsymbol{\beta}'-\boldsymbol{\beta}}{\boldsymbol{\beta}}t\rho)^{-\boldsymbol{\beta
} } $. Then the weighted norms
$\|\cdot\|_{W^p_{k,\boldsymbol{\beta}}}$,
calculated with respect to these choices, coincide with 
$t^{-\boldsymbol{\beta}'}\|\cdot\|_{W^p_{k,\boldsymbol{\beta}}}$,
calculated with
respect to $(L,g,\rho,w:=\rho^{-\boldsymbol{\beta}})$. In particular,
all forms of the weighted Sobolev Embedding Theorems hold for
$(L,t^2g,t\rho,
w_t:=(t^\frac{\boldsymbol{\beta}'-\boldsymbol{\beta}}{\boldsymbol{\beta}}t\rho)^{-\boldsymbol{\beta
} } )$
with $t$-independent Sobolev constants. 
\end{enumerate}
\end{corollary}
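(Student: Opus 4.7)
The plan is to reduce both parts to Corollary \ref{cor:embedding} (which already establishes the weighted Sobolev Embedding Theorems for $(L,g,\rho,\boldsymbol{\beta})$) by showing that the rescaled data $(L,t^2g,t\rho,w_t)$ produces norms that differ from the original only by an overall multiplicative constant of the form $t^{c}$. Since such an overall factor cancels on both sides of a Sobolev embedding inequality, the Sobolev constants transfer unchanged, which is exactly the statement of $t$-independence. Lemma \ref{l:rescaledconifold} already guarantees that $(L,t^2g)$ is a conifold with radius function $t\rho$, so all the structural hypotheses needed to invoke Theorem \ref{thm:weighted_ok} are available.

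For Part (1), I would observe that the data $(L,t^2g,t\rho,w=(t\rho)^{-\boldsymbol{\beta}})$ is exactly the image of $(L,g,\rho,\rho^{-\boldsymbol{\beta}})$ under the rescaling action $t\cdot(L,g,\rho,w):=(L,t^2g,t\rho,t^{-\boldsymbol{\beta}}w)$ of Remark \ref{rem:tweighted_ok}. Under this action the scaled norms pick up exactly a factor of $t^{-\boldsymbol{\beta}}$ (here $\boldsymbol{\beta}$ is constant), and the assumptions A1--A3 of Theorem \ref{thm:scaled_ok} together with the $w$-bound of Theorem \ref{thm:weighted_ok} are scale-invariant, as noted in Remarks \ref{rem:tscaled_ok} and \ref{rem:tweighted_ok}. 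Hence the embedding theorems transfer verbatim with the same Sobolev constants.

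For Part (2), I would first simplify the definition of $w_t$:
\begin{equation*}
w_t = (t^{(\boldsymbol{\beta}'-\boldsymbol{\beta})/\boldsymbol{\beta}}\,t\rho)^{-\boldsymbol{\beta}} = t^{-(\boldsymbol{\beta}'-\boldsymbol{\beta})}\cdot(t\rho)^{-\boldsymbol{\beta}} = t^{-\boldsymbol{\beta}'}\rho^{-\boldsymbol{\beta}} = t^{-\boldsymbol{\beta}'}w,
\end{equation*}
so $w_t$ is just a constant (in $x$, since $\boldsymbol{\beta}'$ is a constant reference weight) multiple of the original weight $w=\rho^{-\boldsymbol{\beta}}$. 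Next I would compute, using the conventions of Remark \ref{rem:scalednormsbis} (the metric is rescaled only on the $T^*L$ factors of $\nabla^j\sigma$), the identities $|\nabla^j\sigma|_{t^2g\otimes g_E}=t^{-j}|\nabla^j\sigma|_{g\otimes g_E}$ and $\mathrm{vol}_{t^2g}=t^m\mathrm{vol}_g$. Substituting into the integrand defining $\|\sigma\|_{W^p_{k,\boldsymbol{\beta}}(t^2g,t\rho,w_t)}$, the powers of $t$ from $(t\rho)^j$, $|\cdot|_{t^2g}$, $(t\rho)^{-m}$ and $\mathrm{vol}_{t^2g}$ cancel exactly, leaving
\begin{equation*}
|w_t\rho^j\nabla^j\sigma|_g^p\,\rho^{-m}\,\mathrm{vol}_g = t^{-p\boldsymbol{\beta}'}|w\rho^j\nabla^j\sigma|_g^p\,\rho^{-m}\,\mathrm{vol}_g.
\end{equation*}
Summing over $j$ and taking $p$-th roots gives $\|\sigma\|_{W^p_{k,\boldsymbol{\beta}}(t^2g,t\rho,w_t)}=t^{-\boldsymbol{\beta}'}\|\sigma\|_{W^p_{k,\boldsymbol{\beta}}(g,\rho,w)}$, proving the claimed identity. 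The embedding statement then follows by multiplying each embedding inequality provided by Corollary \ref{cor:embedding} by $t^{-\boldsymbol{\beta}'}$ on both sides and using the identity, with no change in the constant.

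The main obstacle is cosmetic rather than conceptual: one must carefully track the conventions of Remark \ref{rem:scalednormsbis} for how $t^2g$ acts on tensor powers of $T^*L$ versus on $E$, so that all powers of $t$ are accounted for correctly. The only other subtle point is verifying that $\boldsymbol{\beta}'$ in Part (2) is genuinely a \emph{constant} (and not a function), so that $t^{-p\boldsymbol{\beta}'}$ pulls out of the integral as a scalar; this is built into the statement (``reference weight $\boldsymbol{\beta}'$''). Once these bookkeeping matters are settled, both parts follow immediately from Corollary \ref{cor:embedding} combined with Remark \ref{rem:tweighted_ok}, as asserted.
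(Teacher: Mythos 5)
Your proposal is correct and follows essentially the same route as the paper, which states the corollary as a direct consequence of Theorem \ref{thm:weighted_ok} and Remark \ref{rem:tweighted_ok}; your explicit cancellation of the powers of $t$ in the integrand and the identity $w_t=t^{-\boldsymbol{\beta}'}\rho^{-\boldsymbol{\beta}}$ are exactly the computations those references are meant to supply. No gaps.
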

\begin{remark}\label{rem:corrective_term}
 Compare the weights used in parts (1) and (2) above. Basically, to deal with variable weights we introduce a corrective factor of the form $t^{\boldsymbol{\beta}-\boldsymbol{\beta}'}$: since the exponent is bounded, for fixed $t$ this doesn't affect the decay/growth condition on the ends. Its effect is simply to yield estimates which are uniform with respect to $t$.
\end{remark}

We conclude this section by summarizing the main definitions and properties of
a second class of manifolds with ends, modelled on cylinders. We will see that
the corresponding theory is closely related to that of conifolds.

\begin{definition}\label{def:metrics_endsbis}
Let $L$ be a manifold with ends. Let $g$ be a Riemannian metric on $L$. Choose an
end $S_i$ with corresponding link $\Sigma_i$. We say that $S_i$ is an
\textit{asymptotically cylindrical} (A.Cyl.) end if the
following conditions hold:
\begin{enumerate}
\item $\Sigma_i$ is endowed with a Riemannian metric $g_i'$.

We then let $(\theta,z)$ denote the generic point on the product manifold
$C_i:=\Sigma_i\times (-\infty,\infty)$ and $\tilde{h}_i:=dz^2+g_i'$ denote the
corresponding \textit{cylindrical metric} on $C_i$.
\item There exist a constant $\nu_i<0$ and a diffeomorphism
$\phi_i:\Sigma_i\times [R',\infty)\rightarrow \overline{S_i}$ such that, as
$z\rightarrow
\infty$ and for all $k\geq 0$,
$$|\tnabla^k(\phi_i^*g-\tilde{h}_i)|_{\tilde{h}_i}=O(e^{\nu_i z}),$$
where $\tnabla$ is the Levi-Civita connection on $C_i$ defined by $\tilde{h}_i$.
\end{enumerate}
We say
that $L$ is a \textit{A.Cyl.}
manifold if all ends are asymptotically
cylindrical. 
\end{definition}

For the purposes of
this paper the function spaces of most interest on A.Cyl. manifolds are not the
ones already
encountered, cf. Section \ref{s:std} and Example \ref{e:std_ok}. Instead, we
use the following.

\begin{definition}\label{def:acyl_sectionspaces}
Let $(L,h)$ be a A.Cyl. manifold. We say that a smooth function
$\zeta:L\rightarrow [1,\infty)$ is a \textit{radius function} if
$\phi_i^*\zeta= z$, where $\phi_i$ are the
diffeomorphisms of Definition \ref{def:metrics_ends}. Given any vector
$\boldsymbol{\beta}=(\beta_1,\dots,\beta_e)\in\R^e$, choose a function
$\boldsymbol{\beta}$ on $L$ which, on each end $S_i$, restricts to the constant
$\beta_i$. We call $(L,h,\zeta,\boldsymbol{\beta})$ a \textit{weighted A.Cyl.
manifold}. Given any metric pair $(E,\nabla)$ we define Banach spaces of
sections of $E$ in the following two ways.

The \textit{weighted spaces of $C^k$ sections} of $E$ are defined by
\begin{equation}\label{eq:weighted_C^kcyl}
C^k_{\boldsymbol{\beta}}(E):=\left\{\sigma\in C^k(E):
\|\sigma\|_{C^k_{\boldsymbol{\beta}}}<\infty\right\},
\end{equation}
where we use the norm $\|\sigma\|_{C^k_{\boldsymbol{\beta}}}:=\sum_{j=0}^k
\mbox{sup}_{x\in L}|e^{-\boldsymbol{\beta}(x)\zeta(x)}\nabla^j\sigma|$. 

The \textit{weighted Sobolev spaces} are defined by
\begin{equation}\label{eq:weighted_cyl}
W^p_{k,\boldsymbol{\beta}}(E):=\mbox{Banach space completion of the space
}\left\{\sigma\in
C^\infty(E):\|\sigma\|_{W^p_{k,\boldsymbol{\beta}}}<\infty\right\},
\end{equation}
where $p\in [1,\infty)$, $k\geq 0$ and we use the norm
$\|\sigma\|_{W^p_{k,\boldsymbol{\beta}}}:=\left(\sum_{j=0}^k \int_L
|e^{-\boldsymbol{\beta} \zeta}\nabla^j\sigma|^p\,\mbox{vol}_h\right)^{1/p}$.

Both types of spaces are independent of the particular choices made.
\end{definition}

\begin{remark}\label{rem:acyl_equivalent}
It is simple to see that the norm $\|\sigma\|_{W^p_{k,\boldsymbol{\beta}}}$ is
equivalent to the norm defined by $\sum_{j=0}^k(\int_L
|\nabla^j(e^{-\boldsymbol{\beta} \zeta}\sigma)|^p\,\mbox{vol}_h)^{1/p}$. This
leads to the following fact. 

Let $W^p_k(E)$ denote the standard Sobolev spaces for $(L,h)$ introduced in
Section \ref{s:std}. Let $e^{\boldsymbol{\beta}\zeta}\cdot W^p_k$ denote the
space of all sections of $E$ of the form
$\sigma=e^{\boldsymbol{\beta}\zeta}\tau$ for some $\tau\in W^p_k(E)$, endowed
with the norm $\|\sigma\|:=\|\tau\|$. Then
$W^p_{k,\boldsymbol{\beta}}(E)=e^{\boldsymbol{\beta}\zeta}\cdot W^p_k(E)$ as
sets and the norms are equivalent. Analogously, the spaces
$C^k_{\boldsymbol{\beta}}(E)$ are equivalent to the spaces
$e^{\boldsymbol{\beta}\zeta}\cdot C^k(E)$, where $C^k(E)$ are the standard
spaces of $C^k$ sections used in Section \ref{s:std}.
\end{remark}

As before, weighted spaces defined with respect to A.Cyl. metrics and
cylindrical metrics are equivalent. Remark \ref{rem:acyl_equivalent} allows us
to reduce
the weighted Sobolev Embedding Theorems for A.Cyl. manifolds to the standard
Sobolev Embedding Theorems, obtaining results analogous to Corollary
\ref{cor:embedding} and Remark \ref{rem:embedding}. According to \cite{hebey}
Theorem 3.1 and Proposition 3.2 the spaces $C^\infty_c$
are dense in the standard Sobolev spaces defined for manifolds whose
ends are exactly cylindrical. The same is then true for weighted Sobolev
spaces on A.Cyl. manifolds.

\begin{remark}\label{rem:spacescoincide}
It is interesting to compare Definitions \ref{def:acyl_sectionspaces} and
\ref{def:csac_sectionspaces}. Assume $(L,h)$ is an A.Cyl. manifold with respect
to certain diffeomorphisms $\phi_i=\phi_i(\theta,z)$ as in Definition
\ref{def:metrics_ends}. Since the corresponding weighted Sobolev spaces are
equivalent we may assume that $h$ is exactly cylindrical on each end,
\textit{i.e.} using the notation of Definition \ref{def:metrics_ends} it can be
written $h=dz^2+g_i'$. Consider the conformally rescaled metric
$g:=e^{2\zeta}h$. Using the change of variables $r=e^z$ it is simple to check
that $g=dr^2+r^2g_i'$. This implies that $(L,g)$ is an AC manifold with respect
to the diffeomorphisms $\phi_i(\theta,\log z)$. Viceversa, any AC metric on $L$
defines a conformally equivalent A.Cyl. metric. Notice that if $z\in
(R',\infty)$ then $r\in (R,\infty)$ with $R:=e^{R'}$ and that
$r^{-m}\mbox{vol}_g=\mbox{vol}_h$. Thus, by change of variables, 
\begin{equation}
\int_R^\infty\int_{\Sigma}|r^{-\boldsymbol{\beta}}\sigma|^pr^{-m}\,\mbox{vol}
_g=\int_{R'}^\infty\int_{\Sigma}|e^{-\boldsymbol{\beta}
z}\sigma|^p\,\mbox{vol}_h.
\end{equation}
This shows that the spaces $L^p_{\boldsymbol{\beta}}(E)$ of sections of $E$
coincide for $(L,g)$ and $(L,h)$, while the corresponding norms are
equivalent (but again, as in Remark \ref{rem:scalednorms}, one may need to take
into account which metric is being used on $E$ in the two cases).

The same is true also for Sobolev spaces of higher order. Specifically, an
explicit calculation shows that the Levi-Civita connections defined by $h$ and
$g$ are equivalent, \textit{i.e.} the corresponding Christoffel symbols coincide
up to constant multiplicative factors. It thus makes no difference which metric
is used to define $\nabla$. On the other hand, the norm inside the integral does
depend on the choice of metric. For example,
\begin{equation}
\int_R^\infty\int_{\Sigma}|r^{-\boldsymbol{\beta}+j}\nabla^j
f\sigma|_g^pr^{-m}\,\mbox{vol}_g=\int_{R'}^\infty\int_{\Sigma}|e^{-\boldsymbol{
\beta} z}\nabla^j \sigma|_h^p\,\mbox{vol}_h.
\end{equation}
This proves that the spaces $W^p_{k,\boldsymbol{\beta}}(E)$ are equivalent.

Analogous results hold for CS manifolds: if $h$ is A.Cyl. then $g:=e^{-2\zeta}h$
is CS. In this case
\begin{equation}
\int_0^{\epsilon}\int_{\Sigma}|r^{-\boldsymbol{\beta}}f|^pr^{-m}\,\mbox{vol}
_g=\int_{-\log \epsilon}^\infty\int_{\Sigma}|e^{\boldsymbol{\beta}
z}f|^p\,\mbox{vol}_h,
\end{equation}
so the space $L^p_{\boldsymbol{\beta}}$ for $(L,g)$ coincides with the space
$L^p_{-\boldsymbol{\beta}}$ for $(L,h)$.

These facts show, for example, that the Sobolev Embedding Theorems for
conifolds and A.Cyl. manifolds are simply two different points of view on the
same result. They also show that $C^\infty_c$ is dense in all weighted Sobolev
spaces on conifolds because, as already seen, this is true on A.Cyl.
manifolds. Finally, they show that in Remark \ref{rem:metrics_ends} we are
really using the cylindrical metric $r^{-2}\tg=\tilde{h}$ to ``measure''
$\tnabla^k$ (in the sense of Remark \ref{rem:scalednorms}).
\end{remark}


\part{Elliptic estimates}

We now turn to the theory of elliptic operators via weighted Sobolev spaces,
focusing on Fredholm and index results for the manifolds discussed in Section \ref{s:manifolds}. Results of this
kind have been proved by various authors, \textit{e.g.} Lockhart-McOwen
\cite{lockhartmcowen}, Lockhart \cite{lockhart} and Melrose \cite{melrose}. We
will follow the point of view of Lockhart and McOwen to which we refer for
details, see also Joyce-Salur \cite{joycesalur}.

\section{Fredholm results for elliptic operators on
A.Cyl. manifolds}\label{s:acyl_analysis}

We start with the case of A.Cyl.
manifolds. The theory requires appropriate assumptions on the asymptotic
behaviour of the operators, which we roughly summarize as follows.
\begin{definition}\label{def:acyl_bundles}
Given a manifold $\Sigma$, consider the projection
$\pi:\Sigma\times\R\rightarrow \Sigma$. A vector bundle $E_{\infty}$ on
$\Sigma\times\R$ is \textit{translation-invariant} if it is of the form
$\pi^*E'$, for some vector bundle $E'$ over $\Sigma$. We define the notion of
translation-invariant metrics and connections analogously. 

Let $P_\infty:C^\infty(E_\infty)\rightarrow C^\infty(F_\infty)$ be a
differential
operator between translation-invariant vector bundles. We say that $P_\infty$ is
\textit{translation-invariant} if it commutes with the action of $\R$ on
$\Sigma\times\R$ determined by translations; equivalently, writing
$P_\infty=\sum
A_k^\infty\cdot\nabla^k$ with respect to a translation-invariant $\nabla$, if
the
coefficient tensors $A_k^\infty$ are independent of $z$.

Let $(L,h)$ be an A.Cyl. manifold with link $\Sigma=\amalg\Sigma_i$. Let $E$,
$F$ be vector bundles
over $L$. Assume there exist translation-invariant vector
bundles $E_\infty$, $F_\infty$ over
$\Sigma\times\R$ such that, using
the notation of Definition \ref{def:metrics_endsbis}, $\phi_i^*(E_{|S_i})$
(respectively, $\phi_i^*(F_{|S_i})$)
coincides with the restriction to $\Sigma_i\times (R',\infty)$ of $E_\infty$
(respectively, $F_\infty$). 
Let
$P_\infty=\sum A_k^\infty\cdot\nabla^k:C^\infty(E_\infty)\rightarrow
C^\infty(F_\infty)$ be a translation-invariant linear differential operator of
order $n$. Consider a linear operator $P:C^\infty(E)\rightarrow C^\infty(F)$. We
say that $P$ is
\textit{asymptotic} to $P_\infty$ if on each end there exists $\nu_i<0$ such
that, writing $P=\sum
A_k\cdot\nabla^k$ (up to identifications) and as $z\rightarrow \infty$,
$$|\nabla^j(A_k-A_k^\infty)|=O(e^{\nu_i z}),$$
where $|\cdot|$ is defined by the translation-invariant metrics. We call $\nu_i$
the \textit{convergence rates} of the operator $P$.

In what follows, to define the spaces $W^p_{k,\boldsymbol{\beta}}(E)$,
we will assume that $E$ is endowed with a metric and a metric connection which
are asymptotic to the translation-invariant data on $E_\infty$, in the
appropriate sense.
\end{definition}

Assume $P$ is a linear operator of order $n$ with bounded coefficients $A_k$. It
follows from Definition \ref{def:acyl_sectionspaces} that, for all $p>1$, $k\geq
0$ and $\boldsymbol{\beta}$, $P$ extends to a continuous map 
\begin{equation}\label{eq:contextension}
P:W^p_{k+n,\boldsymbol{\beta}}(E)\rightarrow W^p_{k,\boldsymbol{\beta}}(F).
\end{equation}

\begin{remark}\label{rem:Pbeta}
It will sometimes be useful to denote by $P_{\boldsymbol{\beta}}$ the extended
operator of Equation \ref{eq:contextension}, so as to emphasize the particular
weight being used.
\end{remark}

Now assume $P$ is asymptotic to a translation-invariant operator $P_\infty$.
Then Equation \ref{eq:contextension} holds also for the operator
$e^{-\boldsymbol{\nu}\zeta}(P-P_\infty)$, where $\boldsymbol{\nu}<0$ denotes the
convergence rates of $P$ as in Definition \ref{def:acyl_bundles}. This implies
that the operator $P-P_\infty$ extends to a continuous map
\begin{equation}\label{eq:differenceoperator}
P-P_\infty:W^p_{k+n,\boldsymbol{\beta}}(E)\rightarrow
W^p_{k,\boldsymbol{\beta}+\boldsymbol{\nu}}(F).
\end{equation}
Notice that if $\boldsymbol{\beta}<\boldsymbol{\beta'}$ then
$W^p_{k+n,\boldsymbol{\beta}}(E)\subset W^p_{k+n,\boldsymbol{\beta'}}(E)$ and
that the operator $P_{\boldsymbol{\beta'}}$ extends the operator
$P_{\boldsymbol{\beta}}$. Notice also that $C_c^\infty(E)\subset
W^p_{k,\boldsymbol{\beta}}(E)$ as a dense subset. Dualizing this relation allows
us to identify the dual space $(W^p_{k,\boldsymbol{\beta}}(E))^*$ with a
subspace of the space of distributions $(C^\infty_c(E))^*$. It is customary to
denote this space $W^{p'}_{-k,-\boldsymbol{\beta}}(E)$. Endowed with the
appropriate norm, it again contains $C^\infty_c(E)$ as a dense subset. The
duality
map $W^{p'}_{-k,-\boldsymbol{\beta}}(E)\times
W^p_{k,\boldsymbol{\beta}}(E)\rightarrow\R$, restricted to this subset,
coincides with the map
\begin{equation}\label{eq:duality}
C^\infty_c(E)\times W^p_{k,\boldsymbol{\beta}}(E)\rightarrow\R,\ \
<\sigma,\sigma'>:=\int_L(\sigma,\sigma')_E\,\mbox{vol}_h.
\end{equation}
This map extends by continuity to a map defined on
$W^{p'}_{l,-\boldsymbol{\beta}}(E)\times W^p_{k,\boldsymbol{\beta}}(E)$ for all
$l\geq 0$, showing that $W^{p'}_{-k,-\boldsymbol{\beta}}(E)$ also contains all
spaces $W^{p'}_{l,-\boldsymbol{\beta}}(E)$.
It can be shown that $P$ admits continuous extensions as in Equation
\ref{eq:contextension} for any $k\in\Z$. 

\begin{lemma}\label{l:*=*}
Let $P:C^\infty(E)\rightarrow C^\infty(F)$ be a linear differential operator of
order $n$, asymptotic to a translation-invariant operator $P_\infty$. Let
$P^*:C^\infty(F)\rightarrow C^\infty(E)$ denote its formal adjoint. Consider the
continuous extension of $P^*$ to the spaces
\begin{equation}
P^*:W^{p'}_{-k,-\boldsymbol{\beta}}(F)\rightarrow
W^{p'}_{-k-n,-\boldsymbol{\beta}}(E).
\end{equation}
Under the identification of Sobolev spaces of negative order with dual spaces,
this operator coincides with the operator dual to that of Equation
\ref{eq:contextension},
\begin{equation}
P^*:(W^p_{k,\boldsymbol{\beta}}(F))^*\rightarrow
(W^p_{k+n,\boldsymbol{\beta}}(E))^*.
\end{equation}
Furthermore if $E=F$ and $P$ is self-adjoint, \textit{i.e.} $P=P^*$ on smooth
compactly-supported sections, then $P=P^*$ on any space
$W^p_{k,\boldsymbol{\beta}}$.
\end{lemma}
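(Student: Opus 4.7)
\textbf{Plan for the proof of Lemma \ref{l:*=*}.} The strategy is a standard density argument, exploiting the fact (noted at the end of Remark \ref{rem:spacescoincide}, transferred from the A.Cyl. setting via \cite{hebey}) that $C_c^\infty$ is dense in every weighted Sobolev space under consideration. The point of the lemma is simply that two \emph{a priori} different constructions — ``formal adjoint in the distributional sense'' and ``Banach-space dual operator'' — give the same extension, because both are the unique continuous extension of the smooth formal adjoint from $C_c^\infty$.

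First I would verify the compatibility on compactly supported sections. By definition of the formal adjoint,
\begin{equation*}
\int_L (P\sigma,\tau)_F \,\mbox{vol}_h = \int_L (\sigma,P^*\tau)_E\,\mbox{vol}_h,\qquad \forall\,\sigma\in C_c^\infty(E),\ \tau\in C_c^\infty(F).
\end{equation*}
Under the identification in Equation \ref{eq:duality} of a smooth compactly supported section with an element of the dual space, the left-hand side is $\langle\tau,P\sigma\rangle$, which by the Banach-space definition of the dual operator equals $\langle P^{\mathrm{dual}}\tau,\sigma\rangle$; the right-hand side is, again by Equation \ref{eq:duality}, the value of the distributional extension of $P^*$ paired with $\sigma$. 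Hence the two operators agree on $C_c^\infty(F)$.

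Next I would use continuity and density to conclude part (1). The distributional extension $P^*\colon W^{p'}_{-k,-\boldsymbol{\beta}}(F)\to W^{p'}_{-k-n,-\boldsymbol{\beta}}(E)$ is continuous (this is asserted in the paragraph preceding the lemma, where it is noted that $P$, and hence $P^*$, admits continuous extensions for any $k\in\Z$). The Banach-space dual $(W^p_{k,\boldsymbol{\beta}}(F))^*\to(W^p_{k+n,\boldsymbol{\beta}}(E))^*$ of the continuous map in Equation \ref{eq:contextension} is continuous by general Banach-space theory. Both send $C_c^\infty(F)\subset W^{p'}_{-k,-\boldsymbol{\beta}}(F)$ into the same element (by the previous paragraph), and $C_c^\infty(F)$ is dense in $W^{p'}_{-k,-\boldsymbol{\beta}}(F)$. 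By uniqueness of continuous extension, they coincide on the whole space.

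Finally, part (2) is essentially the same argument applied once more. If $P=P^*$ holds as smooth differential operators on $C_c^\infty$, then the two continuous operators
\begin{equation*}
P,\ P^*\colon W^p_{k+n,\boldsymbol{\beta}}(E)\longrightarrow W^p_{k,\boldsymbol{\beta}}(E)
\end{equation*}
agree on the dense subset $C_c^\infty(E)$, hence agree everywhere. The main potential obstacle is ensuring the duality pairing really identifies distributional and dual extensions correctly across the various weights — but this is precisely what Equation \ref{eq:duality} and the discussion around it are set up to handle, and once one checks that the pairing is well-defined on $C_c^\infty\times W^p_{k,\boldsymbol{\beta}}$ via weighted H\"older (Lemma \ref{l:weightedhoelder}, transferred to the A.Cyl. setting as in Remark \ref{rem:spacescoincide}), the rest is automatic.
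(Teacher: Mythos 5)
Your proposal is correct and follows essentially the same route as the paper's own proof: check that the distributional and dual extensions agree on $C^\infty_c$ via the defining identity of the formal adjoint and Equation \ref{eq:duality}, then conclude by continuity and density (applied twice, once in each variable of the pairing), with the self-adjoint case handled by the same density argument. The only ingredient the paper states slightly more explicitly is that the formal adjoint of $P$ is itself asymptotic to the formal adjoint of $P_\infty$, which is what guarantees the continuous extension of $P^*$ exists in the first place; you implicitly cover this by citing the preceding paragraph.
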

\begin{proof}
The formal adjoint of $P$ is asymptotic to the formal adjoint of
$P_\infty$, so the extensions exist as specified. The statement of this lemma
can be clarified by adopting the notation of Remark \ref{rem:Pbeta}: the claim
is then that $(P^*)_{-\boldsymbol{\beta}}=(P_{\boldsymbol{\beta}})^*$, where on
the left the superscript $*$ denotes the formal adjoint and on the right it
denotes the dual map.

Since both maps are continuous, it is sufficient to show that they coincide on a
dense subset: in particular that
$(P^*)_{-\boldsymbol{\beta}}(\tau)=(P_{\boldsymbol{\beta}})^*(\tau)$, for all
$\tau\in C^\infty_c(F)$. Since we are identifying
$(P^*)_{-\boldsymbol{\beta}}(\tau)$ with an element of the dual space
$(W^p_{k+n,\boldsymbol{\beta}}(E))^*$, we can again invoke continuity to claim
that it is sufficient to prove that, for all $e\in C^\infty_c(E)$,
\begin{equation}
\langle (P^*)_{-\boldsymbol{\beta}}(\tau), e\rangle=\langle
(P_{\boldsymbol{\beta}})^*(\tau),e\rangle.
\end{equation}
This claim is now a direct consequence of the definitions and of Equation
\ref{eq:duality}.

The claim concerning self-adjoint operators is a simple consequence of
continuity.
\end{proof}

\begin{remark} \label{rem:nestlingtheker}
As already remarked, $\boldsymbol{\beta'}>\boldsymbol{\beta}$ implies
$P_{\boldsymbol{\beta'}}$ extends $P_{\boldsymbol{\beta}}$. This shows that the
spaces $\mbox{Ker}(P_{\boldsymbol{\beta}})$ grow with $\boldsymbol{\beta}$. On
the other hand, as a vector space, the cokernel of $P$ in Equation
\ref{eq:contextension} is not canonically a subspace of
$W^p_{k,\boldsymbol{\beta}}(F)$ so there is no canonical way of relating
cokernels corresponding to different weights. However, consider the following
construction, for which we assume $P$, $P^*$ are Fredholm. Pick $
\tau_1\in W^p_{k,\boldsymbol{\beta}}(F)$ such that $\langle
\sigma,\tau_1\rangle\neq 0$, for some $\sigma\in \mbox{Ker}(P^*)$. According to
Remark \ref{rem:characterizations} this implies that $\tau_1$ does not belong to
$\mbox{Im}(P)$. By density we can then find $\tilde{\tau_1}$ which is smooth and
compactly-supported and does not belong to $\mbox{Im}(P)$. Now choose $\tau_2$
satisfying $\langle \sigma,\tau_2\rangle\neq 0$ for some $\sigma\in
\mbox{Ker}(P^*)$ and which is linearly independent of $\tau_1$, \textit{etc}.
After a finite number of steps we will have found a vector space spanned by
$\tilde{\tau_1},\dots,\tilde{\tau_k}$ which defines a complement to
$\mbox{Im}(P)$ and thus is isomorphic to $\mbox{Coker}(P)$. Notice that by
construction $\tilde{\tau_i}$ belong to all spaces
$W^p_{k,\boldsymbol{\beta}}(F)$. On the other hand, as $\boldsymbol{\beta}$
decreases the dual weight $-\boldsymbol{\beta}$ increases, so $\mbox{Ker}(P^*)$
increases, so the $\tilde{\tau_i}$ chosen for the weight $\boldsymbol{\beta}$
can be used also for any weight $\boldsymbol{\beta'}<\boldsymbol{\beta}$. The
conclusion is that we can construct spaces representing the cokernel which grow
as $\boldsymbol{\beta}$ decreases, \textit{i.e.} as the function spaces become
smaller.
\end{remark}

Now assume $P$ is elliptic. We are interested in conditions ensuring that the
extended map of Equation \ref{eq:contextension} is Fredholm.

\begin{definition}\label{def:exceptional} Let $\Sigma$ be a compact oriented
Riemannian manifold with connected components $\Sigma_1,\dots,\Sigma_e$. Let
$P_\infty$ be a translation-invariant operator on $\Sigma\times\R$.
Consider the complexified operator $P_\infty:E_\infty\otimes \C\rightarrow
F_\infty\otimes \C$. Choose a connected component $\Sigma_j\times\R$ and fix
$\gamma+i\delta\in\C$. Let us restrict our attention to the space of sections of
$E_\infty\otimes\C$ of the form $e^{(\gamma+i\delta)z}\sigma(\theta)$. Consider
the subspace $V^j_{\gamma+i\delta}$ determined by the solutions to the problem
$P_\infty(e^{(\gamma+i\delta)z}\sigma(\theta))=0$ on $\Sigma_j\times\R$. We
define the space $\mathcal{C}^j_{P_\infty}\subseteq\C$ to be the space of all
$\gamma+i\delta$ such that $V^j_{\gamma+i\delta}\neq 0$. We then define the
space of \textit{exceptional weights} for $P_\infty$ on $\Sigma_j\times\R$ to be
the corresponding set of real values,
$\mathcal{D}_{P_\infty}^j:=\mbox{Re}(\mathcal{C}^j_{P_\infty})\subseteq\R$. 

Now fix a multi-index $\boldsymbol{\gamma}+i\boldsymbol{\delta}\in \C^e$. Let
$V_{\boldsymbol{\gamma}+i\boldsymbol{\delta}}:=\oplus_{j=1}^e
V^j_{\gamma_j+i\delta_j}$. We define the space of \textit{exceptional weights}
for $P_\infty$ on $\Sigma\times\R$, denoted $\mathcal{D}_{P_\infty}\subseteq
\R^e$, to be the set of multi-indices $\gamma=(\gamma_1,\dots,\gamma_e)$ such
that, for some $j$, $\gamma_j\in\mathcal{D}_{P_\infty}^j$.
\end{definition}
\begin{remark}\label{rem:exceptional}
Definition \ref{def:exceptional} introduces the exceptional weights via the
kernel of $P_\infty$ and the space of sections with exponential growth. Along
the lines of \cite{lockhartmcowen}, the exceptional weights can equivalently be
defined as follows. Separating the $\partial\theta$ derivatives from the
$\partial z$ derivatives and setting $Dz=-i\partial z$, we can write 
\begin{equation}
P_\infty=\sum A_k(\theta,\partial\theta)(\partial z)^k=\sum
A_k(\theta,\partial\theta)i^k(Dz)^k,
\end{equation}
where, to simplify the notation, $\partial\theta$ denotes any combination of
derivatives in the $\theta$ variables.
For any $\lambda\in\C$, set $P_\lambda:=\sum
A_k(\theta,\partial\theta)i^k\lambda^k$. Notice that
\begin{equation}
P_\infty(e^{i\lambda z}\sigma(\theta))=\sum
A_k(\theta,\partial\theta)(i\lambda)^k\sigma e^{i\lambda z}=(P_\lambda (\sigma))
e^{i\lambda z}
\end{equation}
so $P_\infty(e^{i\lambda z}\sigma(\theta))=0$ iff $P_\lambda(\sigma)=0$. We view
the latter as a \textit{generalized eigenvalue problem} on $\Sigma$ and say that
$\lambda$ is an \textit{eigenvalue} iff the corresponding generalized eigenvalue
problem admits non-trivial solutions. It follows from the above calculations
that a weight $\gamma\in\R$ is exceptional in the sense of Definition
\ref{def:exceptional} iff $-\gamma=\mbox{Im}(\lambda)$, for some eigenvalue
$\lambda$.
\end{remark}

For elliptic operators it turns out that the exceptional weights of $P_\infty$
determine the possible Fredholm extensions of any $P$ asymptotic to $P_\infty$.
\begin{theorem}\label{thm:acyl_fredholm}
Let $(L,h)$ be an A.Cyl. manifold with link $\Sigma=\amalg \Sigma_i$. 
Let
$P:C^\infty(E)\rightarrow C^\infty(F)$ be a linear elliptic operator of order
$n$, asymptotic to an elliptic operator
$P_\infty$.

Then each $\mathcal{D}^j_{P_\infty}$ is discrete in $\R$ so
$\mathcal{D}_{P_\infty}$ defines a discrete set of hyperplanes in $\R^e$.
Furthermore, for each $p>1$ and $k\geq 0$, the extended operator
$P_{\boldsymbol{\gamma}}:W^p_{k+n,\boldsymbol{\gamma}}(E)\rightarrow
W^p_{k,\boldsymbol{\gamma}}(F)$ is Fredholm iff $\boldsymbol{\gamma}\notin
\mathcal{D}_{P_\infty}$.
\end{theorem}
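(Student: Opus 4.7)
The plan is to follow the strategy of Lockhart--McOwen, reducing the Fredholm problem for $P$ on $L$ to an analysis of the model operator $P_\infty$ on the product cylinder, where Fourier techniques are available.

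First I would establish discreteness of each $\mathcal{D}^j_{P_\infty}$. Working one component $\Sigma_j$ at a time, the generalized eigenvalue problem of Remark \ref{rem:exceptional} gives a family $\lambda\mapsto P_\lambda$ of elliptic operators on the compact manifold $\Sigma_j$ depending polynomially, hence holomorphically, on $\lambda\in\C$. Each $P_\lambda$ is Fredholm by the compact elliptic theory, and ellipticity forces $P_\lambda$ to be invertible once $|\operatorname{Im}(\lambda)|$ is large enough (the $(\partial z)^n$-contribution dominates after conjugation). Analytic Fredholm theory then shows that the non-invertible locus $\mathcal{C}^j_{P_\infty}\subset\C$ is a discrete set, and therefore its projection $\mathcal{D}^j_{P_\infty}=\operatorname{Re}(\mathcal{C}^j_{P_\infty})$ is locally finite in $\R$. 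The hyperplane structure of $\mathcal{D}_{P_\infty}\subset\R^e$ is immediate from the product definition.

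Next I would treat the model operator on $\Sigma\times\R$. Using Remark \ref{rem:acyl_equivalent}, conjugation by $e^{\boldsymbol{\gamma}\zeta}$ identifies $W^p_{k,\boldsymbol{\gamma}}$ with the standard $W^p_k$, and turns $P_\infty$ into a translation-invariant operator whose Fourier transform in $z$ acts on each frequency $\xi$ by the operator-valued symbol $P_{i\gamma+\xi}$ on $\Sigma_j$. The assumption $\gamma\notin\mathcal{D}^j_{P_\infty}$ means $P_{i\gamma+\xi}$ is invertible for every $\xi\in\R$, and uniform bounds for $|\xi|$ large come from ellipticity. Fourier multiplier/pseudodifferential calculus then furnishes a two-sided inverse $Q_\infty$ for $P_\infty$ as a bounded operator between the appropriate weighted spaces on the cylinder. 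For $p\neq 2$ one invokes the standard Mikhlin-type multiplier theorem rather than Plancherel.

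Next, I would transfer this back to $L$ by a parametrix patching argument. Choose cutoffs $\chi_\infty$ supported on the ends, $\chi_0$ supported on a compact neighborhood of $K=L\setminus S$, with $\chi_0+\chi_\infty\equiv 1$, and slightly larger $\chi'_\infty,\chi'_0$ equal to $1$ on the supports of $\chi_\infty,\chi_0$ respectively. Let $Q_0$ be an interior pseudodifferential parametrix. Define $Q:=\chi'_\infty Q_\infty\chi_\infty+\chi'_0 Q_0\chi_0$. A direct computation gives $PQ=I+K_1$ and $QP=I+K_2$, where the error terms are (i) commutator contributions $[P,\chi]Q$ supported in a compact region, which are compact by the classical Rellich theorem, and (ii) the perturbation term $\chi'_\infty(P-P_\infty)Q_\infty\chi_\infty$. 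The hypothesis $|\nabla^j(A_k-A_k^\infty)|=O(e^{\nu_i z})$ with $\nu_i<0$ shows this last term maps $W^p_{k,\boldsymbol{\gamma}}$ continuously into $W^p_{k,\boldsymbol{\gamma}+\boldsymbol{\nu}}$, and the strict weight gain gives compactness: decomposing into slices $\Sigma_i\times[n,n+1]$ and combining Rellich on each slice with the exponential decay in $n$, a diagonal extraction produces a convergent subsequence. Thus $P_{\boldsymbol{\gamma}}$ admits a two-sided Fredholm parametrix and is itself Fredholm. The converse implication when $\boldsymbol{\gamma}\in\mathcal{D}_{P_\infty}$ follows by pulling a non-trivial eigenmode $e^{(-\gamma_j+i\delta)z}\sigma(\theta)$ back by cutoffs translated to infinity, producing a sequence of approximate kernel elements bounded below in $W^p_{k,\boldsymbol{\gamma}}$-norm that have no convergent subsequence, contradicting closed range.

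The main technical obstacle is the compactness claim in the parametrix step: the naive Rellich theorem fails on non-compact ends, so one must genuinely exploit the weight gain $\boldsymbol{\nu}<0$ provided by the asymptotic-to-$P_\infty$ hypothesis, together with a cylinder-slice decomposition, to upgrade the continuity $W^p_{k,\boldsymbol{\gamma}}\to W^p_{k,\boldsymbol{\gamma}+\boldsymbol{\nu}}$ to compactness as an endomorphism of $W^p_{k,\boldsymbol{\gamma}}$. Careful weight bookkeeping is similarly what drives the converse.
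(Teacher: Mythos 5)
The paper does not prove this theorem: it is stated as a review of known results, with the proof deferred to Lockhart--McOwen \cite{lockhartmcowen} (see also \cite{lockhart}, \cite{melrose}, \cite{joycesalur}). Your outline is essentially the standard Lockhart--McOwen argument, and most of it is sound: the analytic-Fredholm-theory proof of discreteness, the Fourier inversion of the translation-invariant model, the parametrix patching, and the converse via cutoff eigenmodes translated to infinity are all the right ingredients.

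There is, however, one genuine gap, precisely at the step you flag as the main technical obstacle. The error term $\chi'_\infty(P-P_\infty)Q_\infty\chi_\infty$ is \emph{not} compact on $W^p_{k,\boldsymbol{\gamma}}$, and your slice-plus-Rellich argument does not repair this. Write $P-P_\infty=\sum_{j=0}^n(A_j-A_j^\infty)\cdot\nabla^j$. Since $Q_\infty$ gains exactly $n$ derivatives, the terms with $j\leq n-1$ do gain a derivative and your diagonal-extraction argument (local Rellich on slices plus exponential decay of the tail) handles them. But the top-order term $(A_n-A_n^\infty)\cdot\nabla^nQ_\infty$ maps $W^p_{k,\boldsymbol{\gamma}}$ to $W^p_{k,\boldsymbol{\gamma}+\boldsymbol{\nu}}$ with \emph{no} derivative gain, and the inclusion $W^p_{k,\boldsymbol{\gamma}+\boldsymbol{\nu}}\hookrightarrow W^p_{k,\boldsymbol{\gamma}}$ is not compact (on each slice it is just a bounded multiple of the identity on $W^p_k$, to which Rellich does not apply). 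The standard repair is not compactness but smallness: choose $\chi_\infty$ supported in $\{z>T\}$ and use $|A_n-A_n^\infty|=O(e^{\nu z})$ to bound the operator norm of this term by $Ce^{\nu T}$, which is $<1/2$ for $T$ large; then $PQ=I+(\mbox{small})+(\mbox{compact})$ still yields a two-sided Fredholm parametrix. A secondary caveat: for $p\neq 2$ the symbol $\xi\mapsto P_{i\gamma+\xi}^{-1}$ is operator-valued, so the scalar Mikhlin theorem does not apply directly; Lockhart--McOwen instead get $L^p$ bounds from explicit exponential decay of the inverse's convolution kernel (via contour shifting) combined with local elliptic estimates. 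With these two repairs your argument goes through.
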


In a similar vein, we can compute how the index of $P$ depends on
$\boldsymbol{\gamma}$. 

\begin{definition}\label{def:indexchange}
Consider the complexified operator $P_\infty:E_\infty\otimes \C\rightarrow
F_\infty\otimes \C$. Choose a connected component $\Sigma_j\times\R$ of
$\Sigma\times\R$ and fix $\gamma+i\delta\in\mathcal{C}^j_{P_\infty}$. We denote
by $\widetilde{V}^j_{\gamma+i\delta}$ the space of solutions to the problem
$P_\infty(e^{(\gamma+i\delta)z}\sigma(\theta,z))=0$ on $\Sigma_j\times\R$, where
$\sigma(\theta,z)$ is polynomial in $z$. We can extend this definition to all
$\gamma+i\delta$ by setting $\widetilde{V}^j_{\gamma+i\delta}=\{0\}$ if
$\gamma+i\delta\notin \mathcal{C}^j_{P_\infty}$. Notice that
$V^j_{\gamma+i\delta}\leq\widetilde{V}^j_{\gamma+i\delta}$. Given any
$\gamma\in\R$ we now set
$\widetilde{V}^j_\gamma:=\bigoplus_{\delta\in\R}\widetilde{V}^j_{\gamma+i\delta}
$, then define the \textit{multiplicity} of $\gamma$ on $\Sigma_j\times\R$ by
$m^j_{P_\infty}(\gamma):=\mbox{dim}(\widetilde{V}^j_\gamma)$.

Now fix a multi-index $\boldsymbol{\gamma}\in\R^e$. We define the
\textit{multiplicity} of $\boldsymbol{\gamma}$ on $\Sigma\times\R$ to be 
$m_{P_\infty}(\boldsymbol{\gamma}):=\sum_{j=1}^e m^j_{P_\infty}(\gamma_j)$. 
\end{definition}

\begin{theorem} \label{thm:acyl_indexchange}
In the setting of Theorem \ref{thm:acyl_fredholm}, each multiplicity
$m_{P_\infty}(\boldsymbol{\gamma})$ is finite. Furthermore, choose
$\boldsymbol{\gamma}_1,\boldsymbol{\gamma}_2\in \R^e\setminus
\mathcal{D}_{P_\infty}$ with $\boldsymbol{\gamma}_1\leq\boldsymbol{\gamma}_2$.
Then 
$$i_{\boldsymbol{\gamma}_2}(P)-i_{\boldsymbol{\gamma}_1}(P)=\sum_{\boldsymbol{
\gamma}\in\mathcal{D}_{P_\infty},
\boldsymbol{\gamma}_1\leq\boldsymbol{\gamma}\leq\boldsymbol{\gamma}_2}
m_{P_\infty}(\boldsymbol{\gamma}).$$
\end{theorem}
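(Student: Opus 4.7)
My plan is to handle finiteness of the multiplicities first, then reduce the index jump formula to the case of crossing a single exceptional hyperplane on a single end, and finally relate that jump to $\dim\widetilde{V}^j_{\gamma_0}$ via an asymptotic expansion argument combined with a duality pairing.

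For finiteness, Remark \ref{rem:exceptional} identifies $\widetilde{V}^j_{\gamma+i\delta}$, under the substitution $\lambda = \delta - i\gamma$, with the generalized null-space of the holomorphic pencil $P_\lambda$ of elliptic operators on the compact link $\Sigma_j$. By Theorem \ref{thm:acyl_fredholm} the set $\mathcal{D}^j_{P_\infty}$ is discrete, so for each fixed $\gamma$ there are only finitely many eigenvalues $\lambda$ with $\mbox{Im}(\lambda)=-\gamma$. Each such $\lambda$ is an isolated eigenvalue of the analytic Fredholm family $P_\lambda$, so by the Gohberg--Sigal theory the total algebraic multiplicity — i.e. the dimension of the space of Jordan chains of arbitrary length, which is exactly $\widetilde{V}^j_{\gamma+i\delta}$ — is finite. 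Summing over the finitely many relevant $\delta$ and over the ends $j$ yields finiteness of $m_{P_\infty}(\boldsymbol{\gamma})$.

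For the index formula, additivity along a chain of intermediate non-exceptional weights reduces everything to the case where $\boldsymbol{\gamma}_1$ and $\boldsymbol{\gamma}_2$ differ only in the $j$-th component and exactly one exceptional value $\gamma_0$ lies strictly between them; it then suffices to prove
$$i_{\boldsymbol{\gamma}_2}(P) - i_{\boldsymbol{\gamma}_1}(P) = m^j_{P_\infty}(\gamma_0).$$
The central analytic tool is the following asymptotic expansion: any $u \in W^p_{k+n,\boldsymbol{\gamma}_2}(E)$ with $Pu \in W^p_{k,\boldsymbol{\gamma}_1}(F)$ decomposes, on the $j$-th end, as $u = \chi\cdot v + u'$, where $v \in \widetilde{V}^j_{\gamma_0}$, $\chi$ is a smooth cutoff supported in $\{z\gg 0\}$, and $u' \in W^p_{k+n,\boldsymbol{\gamma}_1}(E)$. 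This is obtained by writing the equation on the cylindrical model via the Fourier--Laplace transform in $z$, inverting $P_\lambda$ on the horizontal line $\{\mbox{Im}(\lambda) = -\gamma_2\}$, and shifting the inversion contour to $\{\mbox{Im}(\lambda) = -\gamma_1\}$; the residues picked up at the isolated poles on the intermediate line $\{\mbox{Im}(\lambda) = -\gamma_0\}$ are precisely the Jordan-chain data for the pencil, hence elements of $\widetilde{V}^j_{\gamma_0}$. Standard elliptic localization and a parametrix argument transfer this conclusion from the model cylinder to the asymptotically cylindrical $(L,h)$.

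This expansion produces a linear map $\Phi : \mbox{Ker}(P_{\boldsymbol{\gamma}_2}) \to \widetilde{V}^j_{\gamma_0}$, $u \mapsto v$, with kernel exactly $\mbox{Ker}(P_{\boldsymbol{\gamma}_1})$; and, via Lemma \ref{l:*=*} and the identification $\mbox{Coker}(P_{\boldsymbol{\gamma}})^* \cong \mbox{Ker}(P^*_{-\boldsymbol{\gamma}})$, an analogous map $\Phi^* : \mbox{Ker}(P^*_{-\boldsymbol{\gamma}_1}) \to \widetilde{V}^j_{-\gamma_0,\,P^*_\infty}$ with kernel $\mbox{Ker}(P^*_{-\boldsymbol{\gamma}_2})$. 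Putting these together gives
$$i_{\boldsymbol{\gamma}_2}(P) - i_{\boldsymbol{\gamma}_1}(P) = \dim\mbox{Im}(\Phi) + \dim\mbox{Im}(\Phi^*),$$
so the theorem reduces to showing that this sum equals $m^j_{P_\infty}(\gamma_0)$. This is the main obstacle, and is handled by constructing a non-degenerate pairing
$$\widetilde{V}^j_{\gamma_0} \times \widetilde{V}^j_{-\gamma_0,\,P^*_\infty} \to \R,$$
defined as the residue at a fixed cross-section $\Sigma_j \times \{z_0\}$ of the boundary expression from Green's identity $\int (Pu,w)_F\,\mbox{vol}_h - \int (u,P^*w)_E\,\mbox{vol}_h$ applied to polynomial-exponential test sections. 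One then checks, using integration by parts on the truncated end together with the contour-shift argument above, that $\mbox{Im}(\Phi)$ and $\mbox{Im}(\Phi^*)$ are mutual annihilators under this pairing: any $v \in \widetilde{V}^j_{\gamma_0}\setminus\mbox{Im}(\Phi)$ represents an obstruction to improving the decay of a kernel element, which by duality pairs non-trivially with an element of $\mbox{Ker}(P^*_{-\boldsymbol{\gamma}_1})$ whose $\Phi^*$-image recovers $v$'s dual. Non-degeneracy then forces $\dim\mbox{Im}(\Phi) + \dim\mbox{Im}(\Phi^*) = \dim\widetilde{V}^j_{\gamma_0}$. The delicate bookkeeping needed when Jordan chains have length greater than one is precisely the content of the Lockhart--McOwen computation, which I would follow in detail.
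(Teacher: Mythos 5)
The paper itself contains no proof of this statement: Section \ref{s:acyl_analysis} is explicitly a review, and Theorem \ref{thm:acyl_indexchange} is quoted from Lockhart--McOwen \cite{lockhartmcowen} (the text says the reader is referred to them for details). Your outline is a sketch of precisely that cited argument --- Gohberg--Sigal multiplicity theory for the operator pencil $P_\lambda$ on the compact link, reduction to a single wall-crossing on a single end, the Fourier--Laplace/contour-shift asymptotic expansion, and the residue pairing between $\widetilde{V}^j_{\gamma_0}$ and the corresponding space for $P^*_\infty$ --- so in that sense you are following the same route the paper relies on, just making it explicit.

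Two points deserve flagging. First, there is a genuine logical slip in your finiteness argument: discreteness of $\mathcal{D}^j_{P_\infty}=\mbox{Re}(\mathcal{C}^j_{P_\infty})$ in $\R$ does \emph{not} imply that only finitely many eigenvalues $\lambda$ of the pencil lie on the horizontal line $\mbox{Im}(\lambda)=-\gamma$; a discrete subset of $\C$ can meet a line in infinitely many points, and discreteness of the projection to $\R$ says nothing about how many eigenvalues sit over a single value $\gamma$. What is actually needed is the parameter-ellipticity estimate: in any strip $a\leq\mbox{Im}(\lambda)\leq b$ the pencil $P_\lambda$ is invertible for $|\mbox{Re}(\lambda)|$ large, so the eigenvalues in the strip form a bounded discrete set, hence a finite one; only then does Gohberg--Sigal give finiteness of $m^j_{P_\infty}(\gamma)$. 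Second, the entire weight of the index formula rests on the non-degeneracy of your residue pairing and on the claim that $\mbox{Im}(\Phi)$ and $\mbox{Im}(\Phi^*)$ are mutual annihilators, including the bookkeeping when the pencil has Jordan chains of length greater than one; you assert these and defer them to the Lockhart--McOwen computation. That is defensible given that the paper does exactly the same, but as written your proposal is an accurate roadmap of the standard proof rather than a self-contained one.
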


\begin{remark} Assume we can compute the value of
$i_{\boldsymbol{\gamma}}(P)$ for a specific good choice of non-exceptional
$\boldsymbol{\gamma}$. Theorem \ref{thm:acyl_indexchange} then allows us to
compute $i_{\boldsymbol{\gamma}}(P)$ for all non-exceptional
$\boldsymbol{\gamma}$ in terms of data on the link.
\end{remark}

The following result is proved in \cite{lockhartmcowen} Section 7, cf. also
\cite{joycesalur}, as a consequence of the Sobolev Embedding and
change of index theorems.

\begin{prop}\label{prop:acylindep}
In the setting of Theorem \ref{thm:acyl_indexchange}, assume
$\boldsymbol{\gamma}$ and $\boldsymbol{\gamma}'$ belong to the same connected
component of $\R^e\setminus\mathcal{D}_{P_\infty}$. Then
$i_{\boldsymbol{\gamma}}(P)=i_{\boldsymbol{\gamma}'}(P)$ and
$\mbox{Ker}(P_{\boldsymbol{\gamma}})=\mbox{Ker}(P_{\boldsymbol{\gamma}'})$.
Furthermore, the index and kernel are independent of the choice of $p$ and $k$.
\end{prop}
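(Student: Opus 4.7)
My plan is to deduce both the index and kernel statements from the index-change formula by reducing to a well-chosen comparable weight, and then to treat the regularity/duality issues separately. By Theorem \ref{thm:acyl_fredholm} and Definition \ref{def:exceptional}, the exceptional set $\mathcal{D}_{P_\infty}\subseteq \R^e$ is a locally finite union of coordinate hyperplanes of the form $\{\gamma_j=c\}$. Hence the connected components of $\R^e\setminus \mathcal{D}_{P_\infty}$ are products $I_1\times\cdots\times I_e$ of open intervals. In particular, if $\boldsymbol{\gamma}$ and $\boldsymbol{\gamma}'$ lie in the same component, then so does the coordinate-wise minimum $\boldsymbol{\gamma}_\wedge:=(\min(\gamma_j,\gamma_j'))_j$, and both boxes $[\boldsymbol{\gamma}_\wedge,\boldsymbol{\gamma}]$ and $[\boldsymbol{\gamma}_\wedge,\boldsymbol{\gamma}']$ are disjoint from $\mathcal{D}_{P_\infty}$. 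Theorem \ref{thm:acyl_indexchange} then forces $i_{\boldsymbol{\gamma}}(P)=i_{\boldsymbol{\gamma}_\wedge}(P)=i_{\boldsymbol{\gamma}'}(P)$.

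For the kernels, the inclusions $\mbox{Ker}(P_{\boldsymbol{\gamma}_\wedge})\subseteq \mbox{Ker}(P_{\boldsymbol{\gamma}})\cap\mbox{Ker}(P_{\boldsymbol{\gamma}'})$ follow immediately from the continuous embeddings of the weighted spaces, cf.\ Remark \ref{rem:nestlingtheker}. The substantive claim is the reverse inclusion: any $\sigma\in\mbox{Ker}(P_{\boldsymbol{\gamma}})$ must automatically improve its decay up to $\boldsymbol{\gamma}_\wedge$. I would prove this end by end. On the end $\Sigma_j\times[R',\infty)$ rewrite $P\sigma=0$ as $P_\infty\sigma=-(P-P_\infty)\sigma$; by Definition \ref{def:acyl_bundles}, the convergence rate $\nu_j<0$ ensures that the right-hand side lies in a strictly better weighted space than $\sigma$. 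The cylindrical Fourier-Laplace inversion developed in Lockhart-McOwen \cite{lockhartmcowen} then yields an asymptotic expansion of $\sigma$ whose exponents lie in $\mathcal{C}^j_{P_\infty}$ and whose real parts lie in $[\gamma_{\wedge,j},\gamma_j]$. Because this interval avoids $\mathcal{D}^j_{P_\infty}$ by construction, the expansion is trivial and $\sigma$ decays at least at rate $e^{\gamma_{\wedge,j}z}$ on the end. This contour-shifting/bootstrap step is the technical core of the proof and the main obstacle.

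Finally, $p$- and $k$-independence of the kernel is a consequence of interior elliptic regularity, which promotes kernel elements to smooth sections, combined with the observation that a smooth section with pointwise exponential decay at rate $\boldsymbol{\gamma}$ on each end automatically lies in $W^{p'}_{l,\boldsymbol{\gamma}'}$ for every $p', l$ and every $\boldsymbol{\gamma}'$ in the same component of $\R^e\setminus\mathcal{D}_{P_\infty}$. Applying the same improved-decay analysis to the formal adjoint $P^*$ (which is asymptotic to the elliptic $P_\infty^*$) and invoking Lemma \ref{l:*=*} together with the natural isomorphism $\mbox{Coker}(P_{\boldsymbol{\gamma}})^*\simeq \mbox{Ker}(P^*_{-\boldsymbol{\gamma}})$ transfers the $p,k$-independence to the cokernel, and hence to the index.
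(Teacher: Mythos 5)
Your argument is correct and follows essentially the route the paper itself relies on (the proposition is stated without proof, deferring to \cite{lockhartmcowen} Section 7 and \cite{joycesalur}): index-invariance from the change-of-index formula applied over a box $[\boldsymbol{\gamma}_\wedge,\boldsymbol{\gamma}]$ disjoint from $\mathcal{D}_{P_\infty}$, kernel-invariance from the improved-decay/asymptotic-expansion bootstrap on each cylindrical end, and $p,k$-independence from interior elliptic regularity plus the weighted Sobolev embeddings, transferred to the cokernel via Lemma \ref{l:*=*}. One small imprecision to fix: pointwise decay at exactly the rate $\boldsymbol{\gamma}$ does not by itself give membership in $W^{p'}_{l,\boldsymbol{\gamma}'}$ for $\boldsymbol{\gamma}'\leq\boldsymbol{\gamma}$ in the same component---you must first push the decay strictly below the target weight using the very contour-shifting bootstrap you describe, which is available precisely because the component is an open box containing no exceptional hyperplanes.
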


\begin{example} \label{e:acyl_harmonic}
Assume $(L,h)$ is an A.Cyl. manifold with one end with link $(\Sigma,g')$. Let
$P:=\Delta_h$ denote the positive Laplace operator on functions. Then $P$ is
asymptotic to the Laplace operator $\Delta_{\tilde{h}}$ defined on the product
$(\Sigma\times\R, \tilde{h}:=dz^2+g')$. One can check that
$\Delta_{\tilde{h}}=-(\partial z)^2+\Delta_{g'}$ and that
$\Delta_{\tilde{h}}e^{(\gamma+i\delta)z}\sigma(\theta)=0$ iff $\delta=0$ and
$\Delta_{g'}\sigma=\gamma^2\sigma$. In other words, the harmonic functions on
the cylinder which have exponential growth are generated by the eigenvalues of
$\Delta_{g'}$. In particular, the exceptional weights for $\Delta_h$ are of the
form $\pm\sqrt{e_n}$, where $e_n$ are the eigenvalues of $\Delta_{g'}$.
\end{example}


\section{Weight-crossing}\label{s:weightcrossing}

Let $(L,h)$ be an A.Cyl. manifold. Let $P:C^\infty(E)\rightarrow C^\infty(F)$ be
a linear elliptic operator asymptotic to some $P_\infty$ as in Definition
\ref{def:acyl_bundles}. Consider the extension of $P$ to weighted Sobolev spaces
as in Equation \ref{eq:contextension}. When $\boldsymbol{\beta}$ changes value
crossing an exceptional weight the change of index formula given in Theorem
\ref{thm:acyl_indexchange} leads us to expect that the kernel and/or cokernel of
$P$ will change. Specifically, when $\boldsymbol{\beta}$ increases we expect the
kernel of $P$ to increase and the cokernel to decrease. The process by which
this occurs can be formalized  using the Fredholm and index results stated in
Section \ref{s:acyl_analysis}. The notation we rely on was introduced in
Definitions \ref{def:exceptional} and \ref{def:indexchange}. To simplify the
notation, throughout this section we forgo the distinction between bundles (or
operators) and their complexifications.
 
Literally speaking, given any index $\gamma\in\R$ and end $S_j$, the sections in
each $\widetilde{V}^j_{\gamma}$ are defined on $\Sigma_j\times\R$. Using the
identification $\phi_j$, we can alternatively think of them as being defined on
$S_j$. However, we can also think of them as being globally defined on $L$ by
first choosing a basis of sections $\sigma_i^j$ for each
$\widetilde{V}^j_{\gamma}$, then interpolating between them so as to get smooth
extensions $\sigma_i^j$ over $L$. In particular it may be useful to choose the
extension of each $\sigma_i^j$ so that it is identically zero on the other
ends. The construction implies that each
$P_\infty(\sigma_i^j)$ has compact support. By choosing the extensions
generically over $L\setminus S$ we can assume that all $P(\sigma_i^j)$ are
linearly independent. This implies that $P$ is injective on
$\widetilde{V}_{\boldsymbol{\gamma}}$.

Now assume $\boldsymbol{\gamma}\in\R^e$ is exceptional. Then, for any
$\boldsymbol{\nu}<0$ with $|\boldsymbol{\nu}|<<1$, 
\begin{equation}
P:W^p_{k+n,\boldsymbol{\gamma}+\boldsymbol{\nu}}(E)\rightarrow
W^p_{k,\boldsymbol{\gamma}+\boldsymbol{\nu}}(F)
\end{equation}
is Fredholm. In particular, let $\boldsymbol{\nu}<0$ be the convergence rates of
$P$ as in Definition \ref{def:acyl_bundles}. We will assume that
$|\boldsymbol{\nu}|<<1$ as above. Writing
$P(\sigma)=(P-P_\infty)(\sigma)+P_\infty (\sigma)$ and using Equation
\ref{eq:differenceoperator} then shows that
$P(\widetilde{V}_{\boldsymbol{\gamma}})\subset
W^p_{k,\boldsymbol{\gamma}+\boldsymbol{\nu}}(F)$. 
Since $P$ is injective on $\widetilde{V}_{\boldsymbol{\gamma}}$ we can define a
decomposition
\begin{equation}
\widetilde{V}_{\boldsymbol{\gamma}}=\widetilde{V}_{\boldsymbol{\gamma}}'\oplus
\widetilde{V}_{\boldsymbol{\gamma}}''
\end{equation}
by defining
$P(\widetilde{V}_{\boldsymbol{\gamma}}'):=P(\widetilde{V}_{\boldsymbol{\gamma}}
)\cap\mbox{Im}(P_{\boldsymbol{\gamma}+\boldsymbol{\nu}})$ and choosing any
complement $\widetilde{V}_{\boldsymbol{\gamma}}''$. By definition,
$P(\widetilde{V}_{\boldsymbol{\gamma}}'')\cap\mbox{Im}(P_{\boldsymbol{\gamma}
+\boldsymbol{\nu}})=0$. In other words, we can think of
$P(\widetilde{V}_{\boldsymbol{\gamma}}'')$ as belonging to the cokernel of
$P_{\boldsymbol{\gamma}+\boldsymbol{\nu}}$. On the other hand,
$P(\widetilde{V}_{\boldsymbol{\gamma}}'')$ belongs to the image of
$P_{\boldsymbol{\gamma}-\boldsymbol{\nu}}$ because
$\widetilde{V}_{\boldsymbol{\gamma}}\subset
W^p_{k+n,\boldsymbol{\gamma}-\boldsymbol{\nu}}(E)$ . Roughly speaking,
$P(\widetilde{V}_{\boldsymbol{\gamma}}'')$ thus describes the portion of the
cokernel
of $P$ which ``disappears" when crossing the exceptional weight
$\boldsymbol{\gamma}$.

By construction, for any $\sigma\in \widetilde{V}_{\boldsymbol{\gamma}}'$ there
exists $u_\sigma\in W^p_{k+n,\boldsymbol{\gamma}+\boldsymbol{\nu}}(E)$ such that
$P(\sigma)=P (u_\sigma)$. Notice that $u_\sigma$ is not necessarily uniquely
defined. However it is sufficient to fix a choice of $u_{\sigma}$ for each
element of a basis of $\widetilde{V}_{\boldsymbol{\gamma}}'$ to obtain a unique
choice of $u_\sigma$ for any $\sigma\in \widetilde{V}_{\boldsymbol{\gamma}}'$.
Notice also that $\sigma-u_\sigma\in
W^p_{k+n,\boldsymbol{\gamma}-\boldsymbol{\nu}}(E)$. We have thus defined a map
\begin{equation}\label{eq:newkernel}
\widetilde{V}_{\boldsymbol{\gamma}}'\rightarrow\mbox{Ker}(P_{\boldsymbol{\gamma}
-\boldsymbol{\nu}}), \ \
\sigma\mapsto\sigma-u_\sigma\notin
W^p_{k+n,\boldsymbol{\gamma}+\boldsymbol{\nu}}(E).
\end{equation}
The image of the map of Equation \ref{eq:newkernel} thus defines a space of
``new" elements in $\mbox{Ker}(P)$, generated by crossing the exceptional weight
$\boldsymbol{\gamma}$. Notice that $u_\sigma$ is of strictly lower order of
growth compared to $\sigma$. This shows that the map of Equation
\ref{eq:newkernel} is injective and that the elements in its image admit an
asymptotic expansion of the form $e^{\boldsymbol{\gamma}\zeta}+\mbox{lower
order}$. The following result shows that every new element in $\mbox{Ker}(P)$
arises this way.

\begin{lemma} 
Let us identify $\widetilde{V}_{\boldsymbol{\gamma}}'$ with its image under the
map of Equation \ref{eq:newkernel}. Then
$$\mbox{Ker}(P_{\boldsymbol{\gamma}-\boldsymbol{\nu}})=\mbox{Ker}(P_{\boldsymbol
{\gamma}+\boldsymbol{\nu}})\oplus \widetilde{V}_{\boldsymbol{\gamma}}'.$$
\end{lemma}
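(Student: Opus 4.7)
The strategy is to prove both inclusions, with only the reverse direction requiring substantive analytic input. First I would verify the inclusion $\supseteq$ together with directness of the sum. The containment $\mbox{Ker}(P_{\boldsymbol{\gamma}+\boldsymbol{\nu}})\subseteq\mbox{Ker}(P_{\boldsymbol{\gamma}-\boldsymbol{\nu}})$ is immediate from $W^p_{k+n,\boldsymbol{\gamma}+\boldsymbol{\nu}}(E)\subseteq W^p_{k+n,\boldsymbol{\gamma}-\boldsymbol{\nu}}(E)$, and the injection $\widetilde{V}_{\boldsymbol{\gamma}}'\hookrightarrow\mbox{Ker}(P_{\boldsymbol{\gamma}-\boldsymbol{\nu}})$ was supplied by Equation \ref{eq:newkernel}. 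To check directness, suppose $\sigma-u_\sigma\in\mbox{Ker}(P_{\boldsymbol{\gamma}+\boldsymbol{\nu}})$ for some $\sigma\in\widetilde{V}_{\boldsymbol{\gamma}}'$. Since $u_\sigma\in W^p_{k+n,\boldsymbol{\gamma}+\boldsymbol{\nu}}(E)$ by construction, this would force $\sigma$ itself to lie in that space. However, any nonzero element of $\widetilde{V}_{\boldsymbol{\gamma}}$ has, on at least one end, a leading term of the form $e^{(\gamma_j+i\delta)z}\sigma_j(\theta,z)$ with $\sigma_j$ a nontrivial polynomial in $z$, and such growth is incompatible with the weight $\boldsymbol{\gamma}+\boldsymbol{\nu}<\boldsymbol{\gamma}$; hence $\sigma=0$.

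For the reverse inclusion, fix $\phi\in\mbox{Ker}(P_{\boldsymbol{\gamma}-\boldsymbol{\nu}})$. The key analytic input is the asymptotic expansion theorem of Lockhart-McOwen \cite{lockhartmcowen} (see also \cite{joycesalur}): since $|\boldsymbol{\nu}|\ll 1$, the weight $\boldsymbol{\gamma}$ is the unique exceptional weight of $P_\infty$ in the interval $(\boldsymbol{\gamma}+\boldsymbol{\nu},\boldsymbol{\gamma}-\boldsymbol{\nu})$, so the theorem supplies a decomposition $\phi=\sigma+\phi'$ with $\sigma\in\widetilde{V}_{\boldsymbol{\gamma}}$ (extended globally via the cutoff recipe preceding Equation \ref{eq:newkernel}) and $\phi'\in W^p_{k+n,\boldsymbol{\gamma}+\boldsymbol{\nu}}(E)$. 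Applying $P$ yields $P(\sigma)=-P(\phi')\in\mbox{Im}(P_{\boldsymbol{\gamma}+\boldsymbol{\nu}})$, so $P(\sigma)\in P(\widetilde{V}_{\boldsymbol{\gamma}})\cap\mbox{Im}(P_{\boldsymbol{\gamma}+\boldsymbol{\nu}})=P(\widetilde{V}_{\boldsymbol{\gamma}}')$ by the very definition of $\widetilde{V}_{\boldsymbol{\gamma}}'$. The injectivity of $P$ on $\widetilde{V}_{\boldsymbol{\gamma}}$, arranged by the generic-extension argument, then forces $\sigma\in\widetilde{V}_{\boldsymbol{\gamma}}'$. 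Writing $\phi=(\sigma-u_\sigma)+(\phi'+u_\sigma)$, the first summand is the image of $\sigma$ under Equation \ref{eq:newkernel}, while the second lies in $W^p_{k+n,\boldsymbol{\gamma}+\boldsymbol{\nu}}(E)$ (both $\phi'$ and $u_\sigma$ do) and, being the difference between $\phi$ and an element of $\mbox{Ker}(P)$, also lies in $\mbox{Ker}(P_{\boldsymbol{\gamma}+\boldsymbol{\nu}})$.

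The hard part is really only the asymptotic expansion step; this is the analytic heart of Lockhart-McOwen's theory and is not something I would reprove locally. Given that input, the remainder is just a careful unwinding of the constructions of $\widetilde{V}_{\boldsymbol{\gamma}}'$ and $u_\sigma$, together with the elementary growth observation used to establish directness.
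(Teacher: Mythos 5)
Your proposal is correct in substance but proves the hard inclusion by a genuinely different route from the paper. You argue constructively: given $\phi\in\mbox{Ker}(P_{\boldsymbol{\gamma}-\boldsymbol{\nu}})$ you invoke the Lockhart--McOwen asymptotic expansion theorem to peel off a model term $\sigma\in\widetilde{V}_{\boldsymbol{\gamma}}$ with remainder of lower growth, deduce $\sigma\in\widetilde{V}_{\boldsymbol{\gamma}}'$ from the definition of $\widetilde{V}_{\boldsymbol{\gamma}}'$ and the injectivity of $P$ on $\widetilde{V}_{\boldsymbol{\gamma}}$, and recombine. The paper never states or uses the expansion theorem; it instead runs a soft dimension count. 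Having observed (as you do) that $\supseteq$ holds and the sum is direct, it suffices to bound dimensions from the other side: the pairing $([\tau],\sigma)\mapsto\langle\tau,P(\sigma)\rangle$ embeds $\widetilde{V}_{\boldsymbol{\gamma}}''$ into the dual of $\mbox{Ker}(P^*_{-\boldsymbol{\gamma}-\boldsymbol{\nu}})/\mbox{Ker}(P^*_{-\boldsymbol{\gamma}+\boldsymbol{\nu}})$ (via the characterization of closures of images in Remark \ref{rem:characterizations}), giving $\dim\widetilde{V}_{\boldsymbol{\gamma}}''\leq \dim\mbox{Ker}(P^*_{-\boldsymbol{\gamma}-\boldsymbol{\nu}})-\dim\mbox{Ker}(P^*_{-\boldsymbol{\gamma}+\boldsymbol{\nu}})$, and subtracting this from the change-of-index formula of Theorem \ref{thm:acyl_indexchange} forces $\dim\widetilde{V}_{\boldsymbol{\gamma}}'\geq\dim\mbox{Ker}(P_{\boldsymbol{\gamma}-\boldsymbol{\nu}})-\dim\mbox{Ker}(P_{\boldsymbol{\gamma}+\boldsymbol{\nu}})$. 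Your version buys an explicit description of each new kernel element; the paper's buys economy, using only the Fredholm/index package it has already quoted. If you keep your route, make one step explicit: since $P$ is only asymptotic to $P_\infty$ with rate $\boldsymbol{\nu}$, the naive bootstrap $P_\infty\phi=(P_\infty-P)\phi$ lands in $W^p_{k,\boldsymbol{\gamma}}$ --- exactly at the exceptional weight --- so to get the remainder strictly below $\boldsymbol{\gamma}$ you should either quote the expansion theorem in its form for asymptotically translation-invariant operators, or first improve $\phi$ to weight $\boldsymbol{\gamma}+\epsilon$ (legitimate by Proposition \ref{prop:acylindep}, since no exceptional weights lie in $(\boldsymbol{\gamma},\boldsymbol{\gamma}-\boldsymbol{\nu})$) and only then apply the model expansion; the same small-$\epsilon$ care is needed when placing $P(\phi')$ in $\mbox{Im}(P_{\boldsymbol{\gamma}+\boldsymbol{\nu}})$. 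Your treatment of $\supseteq$ and of directness agrees with the paper's.
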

\proof{}By injectivity, the inequality $\supseteq$ is clear. To prove the lemma
it is thus sufficient to prove that the inverse inequality holds on the
corresponding dimensions. Choose any $\sigma\in
\widetilde{V}_{\boldsymbol{\gamma}}''$. According to Remark
\ref{rem:characterizations},
\begin{eqnarray*}
P(\sigma)\in \mbox{Im}(P_{\boldsymbol{\gamma}-\boldsymbol{\nu}})\Leftrightarrow
\langle\tau,P(\sigma)\rangle=0,\ \forall \tau\in
\mbox{Ker}(P^*_{-\boldsymbol{\gamma}+\boldsymbol{\nu}}),\\
P(\sigma)\in \mbox{Im}(P_{\boldsymbol{\gamma}+\boldsymbol{\nu}})\Leftrightarrow
\langle\tau,P(\sigma)\rangle=0,\ \forall \tau\in
\mbox{Ker}(P^*_{-\boldsymbol{\gamma}-\boldsymbol{\nu}}).
\end{eqnarray*}
From the definition of $\widetilde{V}_{\boldsymbol{\gamma}}''$ we know that
$P(\sigma)\in \mbox{Im}(P_{\boldsymbol{\gamma}-\boldsymbol{\nu}})$ and that
$P(\sigma)\notin\mbox{Im}(P_{\boldsymbol{\gamma}+\boldsymbol{\nu}})$ unless
$\sigma=0$. Notice also that
$\mbox{Ker}(P^*_{-\boldsymbol{\gamma}+\boldsymbol{\nu}})\subseteq\mbox{Ker}(P^*_
{-\boldsymbol{\gamma}-\boldsymbol{\nu}})$. We conclude that the following map is
well-defined:
\begin{equation}
\frac{\mbox{Ker}(P^*_{-\boldsymbol{\gamma}-\boldsymbol{\nu}})}{\mbox{Ker}(P^*_{
-\boldsymbol{\gamma}+\boldsymbol{\nu}})}\times
\widetilde{V}_{\boldsymbol{\gamma}}'', \ \ ([\tau],\sigma)\mapsto
\langle\tau,P(\sigma)\rangle,
\end{equation}
and that the corresponding map
\begin{equation}
\widetilde{V}_{\boldsymbol{\gamma}}''\rightarrow
\left(\frac{\mbox{Ker}(P^*_{-\boldsymbol{\gamma}-\boldsymbol{\nu}})}{\mbox{Ker}
(P^*_{-\boldsymbol{\gamma}+\boldsymbol{\nu}})}\right)^*
\end{equation}
is injective. This proves that 
\begin{equation}\label{eq:dimcalc}
\mbox{dim}(\widetilde{V}_{\boldsymbol{\gamma}}'')\leq
\mbox{dim}(\mbox{Ker}(P^*_{-\boldsymbol{\gamma}-\boldsymbol{\nu}}))-\mbox{dim}
(\mbox{Ker}(P^*_{-\boldsymbol{\gamma}+\boldsymbol{\nu}})). 
\end{equation}
On the other hand, the change of index formula shows that
\begin{align}\label{eq:indexcalc}
\mbox{dim}(\widetilde{V}_{\boldsymbol{\gamma}}')+\mbox{dim}(\widetilde{V}_{
\boldsymbol{\gamma}}'')&=\mbox{dim}(\mbox{Ker}(P_{\boldsymbol{\gamma}
-\boldsymbol{\nu}}))-\mbox{dim}(\mbox{Ker}(P^*_{-\boldsymbol{\gamma}+\boldsymbol
{\nu}}))\\
&\ \
-\mbox{dim}(\mbox{Ker}(P_{\boldsymbol{\gamma}+\boldsymbol{\nu}}))+\mbox{dim}
(\mbox{Ker}(P^*_{-\boldsymbol{\gamma}-\boldsymbol{\nu}})).\nonumber
\end{align}
Subtracting Equation \ref{eq:dimcalc} from Equation \ref{eq:indexcalc} proves
the desired inequality. 
\endproof


\section{Fredholm results for elliptic operators on
conifolds}\label{s:accs_analysis}

We now want to see how to achieve analogous results for certain elliptic
operators on conifolds. In parallel with Section \ref{s:acyl_analysis} it
is possible to develop an abstract definition and theory of
\textit{asymptotically conical} operators, analogous to that of asymptotically
translation-invariant operators on A.Cyl. manifolds. For simplicity, however, we
will limit ourselves to the special case of the Laplace
operator acting on functions. This already 
contains the main ideas of the general theory.

Let $(L,g)$ be a conifold. Consider the weighted spaces introduced in
Definition \ref{def:csac_sectionspaces}. As in Section \ref{s:acyl_analysis} we
denote the dual space $(W^p_{k,\boldsymbol{\beta}})^*$ by
$W^{p'}_{-k,-\boldsymbol{\beta}-m}$. This choice of weights is compatible with
the identifications of Remark \ref{rem:spacescoincide}, and the properties of
these dual spaces are analogous to those seen in Section \ref{s:acyl_analysis}.
It follows directly from the definitions that
\begin{equation*}
\nabla:W^p_{k,\boldsymbol{\beta}}\rightarrow W^p_{k-1,\boldsymbol{\beta}-1}
\end{equation*}
is a continuous operator. Equation \ref{eq:laplace} then implies that $\Delta_g$
extends to a continuous map 
\begin{equation}\label{eq:extendedlaplacian}
\Delta_{\boldsymbol{\beta}}:W^p_{k,\boldsymbol{\beta}}\rightarrow
W^p_{k-2,\boldsymbol{\beta}-2}.
\end{equation}
The following result is closely related to Lemma \ref{l:*=*} and uses the fact
that $\Delta_g$ is formally self-adjoint.

\begin{lemma} \label{l:intbyparts}
Let $(L,g)$ be a conifold. Choose $u\in W^p_{k,\boldsymbol{\beta}}$, $v\in
W^{p'}_{2-k, 2-\boldsymbol{\beta}-m}$. Then
\begin{equation}\label{eq:intbyparts}
\langle v,\Delta_g u\rangle=\langle dv,du\rangle=\langle \Delta_g
v,u\rangle.\end{equation}
\end{lemma}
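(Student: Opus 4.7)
The proof is a standard density-and-continuity argument, starting from integration by parts on compactly supported smooth functions. The plan is as follows.

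First I would check that the three expressions are meaningful duality pairings. By Equation \ref{eq:extendedlaplacian}, $\Delta_g u \in W^p_{k-2,\boldsymbol{\beta}-2}$, whose dual (under the identification introduced in Section \ref{s:accs_analysis}) is precisely $W^{p'}_{2-k,2-\boldsymbol{\beta}-m}$, so the first pairing $\langle v,\Delta_g u\rangle$ makes sense. Since $d:W^p_{k,\boldsymbol{\beta}}\to W^p_{k-1,\boldsymbol{\beta}-1}$ is a bounded map (immediate from the definition of the weighted norm), and similarly $d:W^{p'}_{2-k,2-\boldsymbol{\beta}-m}\to W^{p'}_{1-k,1-\boldsymbol{\beta}-m}$, the middle pairing $\langle dv,du\rangle$ is a pairing between dual spaces. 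The same reasoning applies to $\langle \Delta_g v, u\rangle$, with $\Delta_g v\in W^{p'}_{-k,-\boldsymbol{\beta}-m}=(W^p_{k,\boldsymbol{\beta}})^*$. In each case the pairing is jointly continuous in $(u,v)$, with norm bounded by the product of the operator norms of $d$ or $\Delta_g$ and the duality-pairing norm.

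Second, for $u,v\in C^\infty_c(L)$ the duality pairings reduce to the integral in Equation \ref{eq:duality}, and the identity
\[
\int_L v\,\Delta_g u\,\vol_g \;=\; \int_L (dv,du)_g\,\vol_g \;=\; \int_L u\,\Delta_g v\,\vol_g
\]
is the classical divergence-theorem computation: applying Stokes to the compactly supported $1$-forms $v\,du$ and $u\,dv$ produces no boundary contribution, and $\Delta_g=d^*d$ on functions. This is the concrete input.

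Third, I would invoke the density statement at the end of Remark \ref{rem:spacescoincide}, which asserts that $C^\infty_c(L)$ is dense in every weighted Sobolev space on a conifold (in either the CS or AC sense). Choosing approximating sequences $u_n\to u$ in $W^p_{k,\boldsymbol{\beta}}$ and $v_n\to v$ in $W^{p'}_{2-k,2-\boldsymbol{\beta}-m}$, each of the three pairings evaluated on $(u_n,v_n)$ converges, by the joint continuity established in the first step, to its value on $(u,v)$. Since the three pairings agree on $(u_n,v_n)$ by the second step, they agree on $(u,v)$, proving \eqref{eq:intbyparts}.

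The only subtlety, and the step where one has to be careful, is ensuring the continuity of $d$ and $\Delta_g$ between weighted Sobolev spaces of possibly negative order (since $2-k$ may be negative). This is handled exactly as in the A.Cyl. case discussed after Equation \ref{eq:contextension}: the extensions to negative-order spaces are defined by duality from the positive-order mapping properties, and are automatically bounded. Granting this, the argument above goes through verbatim, and in fact the outer equality $\langle v,\Delta_g u\rangle=\langle\Delta_g v,u\rangle$ can be seen as the conifold analogue of the self-adjointness statement at the end of Lemma \ref{l:*=*}.
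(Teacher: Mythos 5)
Your proposal is correct and follows essentially the same route as the paper's proof: each expression is a continuous bilinear form on $W^p_{k,\boldsymbol{\beta}}\times W^{p'}_{2-k,2-\boldsymbol{\beta}-m}$, the identities hold on the dense subset $C^\infty_c\times C^\infty_c$ because $\Delta_g=d^*d$, and continuity plus density extend them to the full spaces. Your write-up merely makes explicit the duality bookkeeping and the density statement from Remark \ref{rem:spacescoincide} that the paper leaves implicit.
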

\begin{proof}
Using the appropriate dualities, each expression in Equation \ref{eq:intbyparts}
defines by composition a continuous bilinear map $(u,v)\in W^p_{k,\beta}\times
W^{p'}_{2-k, 2-\beta-m}\rightarrow\R$. Since $\Delta_g=d^*d$ the equalities hold
on the dense subsets $C^\infty_c\times C^\infty_c$. By continuity the equalities
thus continue to hold on the full Sobolev spaces. 
\end{proof}

We now want to investigate the Fredholm properties of
$\Delta_{\boldsymbol{\beta}}$. It is initially useful to distinguish between the
AC and CS case. To begin, let $(L,g)$ be an AC manifold with ends $S_j$ and
links $\Sigma_j$. The starting point for the Fredholm theory is then the
following observation. 

\begin{lemma} \label{l:conformalac=acyl}
Let $(\Sigma,g')$ be a Riemannian manifold. Let the corresponding cone
$C:=\Sigma\times (0,\infty)$ have the conical metric $\tg:=dr^2+r^2g'$. Let
$\Delta_{\tg}$ denote the corresponding Laplace operator on functions. Then,
under the substitution $r=e^z$, the operator $r^2\Delta_{\tg}$ coincides with
the translation-invariant operator 
\begin{equation}\label{eq:rescaledlaplacian}
P_\infty:=-(\partial z)^2+(2-m)\partial z+\Delta_\Sigma
\end{equation}
on the cylinder $\Sigma\times\R$.
\end{lemma}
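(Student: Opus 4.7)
The plan is a direct local computation of $\Delta_{\tilde g}$ on the cone followed by the change of variables $r = e^z$. First, I would observe that in coordinates $(r,\theta)$ where $g' = g'_{ij}\,d\theta^i d\theta^j$, the metric $\tilde g = dr^2 + r^2 g'$ is block-diagonal with $\tilde g_{rr}=1$ and $\tilde g_{ij} = r^2 g'_{ij}$, so $\det \tilde g = r^{2(m-1)} \det g'$ and the inverse metric has $\tilde g^{rr}=1$, $\tilde g^{ij} = r^{-2} g'^{ij}$. Applying the formula
\[
\Delta_{\tilde g} u = -\frac{1}{\sqrt{\det \tilde g}} \partial_\mu\!\left(\sqrt{\det \tilde g}\, \tilde g^{\mu\nu} \partial_\nu u\right)
\]
and separating the $r$-derivative from the $\theta$-derivatives, the factor $\sqrt{\det g'}$ is independent of $r$ and $r^{m-1}$ factors out of the angular part, yielding
\[
\Delta_{\tilde g} u = -\partial_r^2 u - \frac{m-1}{r} \partial_r u + \frac{1}{r^2} \Delta_\Sigma u.
\]

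Multiplying through by $r^2$ gives
\[
r^2 \Delta_{\tilde g} u = -r^2 \partial_r^2 u - (m-1) r \partial_r u + \Delta_\Sigma u.
\]
Next I would perform the substitution $r = e^z$. The chain rule gives $\partial_r = e^{-z}\partial_z$, hence $r\partial_r = \partial_z$ and
\[
r^2 \partial_r^2 = r \partial_r(r \partial_r) - r\partial_r = \partial_z^2 - \partial_z.
\]
Substituting yields
\[
r^2 \Delta_{\tilde g} u = -(\partial_z^2 - \partial_z) u - (m-1)\partial_z u + \Delta_\Sigma u = -\partial_z^2 u + (2-m)\partial_z u + \Delta_\Sigma u,
\]
which is exactly $P_\infty u$ as claimed. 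There is no serious obstacle; the only point to watch is the cross-term $-\partial_z$ coming from the conversion of $r^2\partial_r^2$ into $\partial_z$-derivatives, which combines with the $-(m-1)\partial_z$ arising from the first-order radial term to produce the coefficient $(2-m)$.
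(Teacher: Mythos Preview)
Your proof is correct and follows essentially the same approach as the paper: both use the local coordinate formula for the Laplacian to derive $\Delta_{\tilde g}=-\partial_r^2-\tfrac{m-1}{r}\partial_r+r^{-2}\Delta_\Sigma$, then substitute $r=e^z$ (equivalently $r\partial_r=\partial_z$) to conclude. You have simply written out the ``simple calculation'' that the paper leaves implicit, including the identity $r^2\partial_r^2=\partial_z^2-\partial_z$.
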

\proof{}
Recall that in any local coordinate system the Laplace operator on functions is
given by the formula 
\begin{equation}\label{eq:locallaplacian}
\Delta_g=-\frac{1}{\sqrt{g}}\partial_j(\sqrt{g}g^{ij}\partial_i).
\end{equation}
Let $U$ be a local chart on $\Sigma$ so that $U\times (0,\infty)$ is a local
chart on $C$. Equation \ref{eq:locallaplacian} then shows that 
\begin{equation}
\Delta_{\tg}=-(\partial r)^2-\frac{m-1}{r}\partial r+r^{-2}\Delta_\Sigma.
\end{equation}
The substitution $r=e^z$ implies $r\partial r=\partial z$. The claim is then a
simple calculation.
\endproof

Lemma \ref{l:conformalac=acyl} allows us to study the Fredholm properties of
$\Delta_g$ by building an equivalent problem for an A.Cyl. manifold, as follows.
We use the notation of Section \ref{s:acyl_analysis}.

Multiplication by $\rho^2$ defines an isometry
$W^p_{k-2,\boldsymbol{\beta}-2}\simeq W^p_{k-2,\boldsymbol{\beta}}$. Thus
$\Delta_{\boldsymbol{\beta}}$ in Equation \ref{eq:extendedlaplacian} is Fredholm
iff the operator 
\begin{equation}\label{eq:extendedrescaledlaplacian}
\rho^2\Delta_{\boldsymbol{\beta}}:W^p_{k,\boldsymbol{\beta}}\rightarrow
W^p_{k-2,\boldsymbol{\beta}}
\end{equation}
is Fredholm. Now consider the A.Cyl. manifold $(L,h)$, where $h=\rho^{-2}g$. It
follows from Equation \ref{eq:laplace} and Lemma \ref{l:conformalac=acyl} that
the operator $P:=\rho^2\Delta_g$ is asymptotic in the sense of Definition
\ref{def:acyl_bundles} to the translation-invariant operator $P_\infty$ of
Equation \ref{eq:rescaledlaplacian}. One can check that the convergence rate
$\boldsymbol{\nu}$ of $P$ coincides with the convergence rate $\boldsymbol{\nu}$
of the AC manifold, cf. Definition \ref{def:metrics_ends}. 

It is simple to verify that the equation
$P_\infty(e^{(\gamma+i\delta)z}\sigma(\theta))=0$ is equivalent to the following
eigenvalue problem on the link:
\begin{equation}\label{eq:ac_harmonicbis}
\Delta_{\Sigma_j}\sigma=[(\gamma+i\delta)^2+(m-2)(\gamma+i\delta)]\sigma.
\end{equation}
Using the fact that the eigenvalues $e_n^j$ of $\Delta_{\Sigma_j}$ are real and
non-negative, it follows that $\delta=0$ and that $\gamma$ satisfies
$\gamma^2+(m-2)\gamma=e_n^j$ for some $n$, \textit{i.e.} 
\begin{equation}\label{eq:exceptionalforlaplacian}
\gamma=\frac{(2-m)\pm\sqrt{(2-m)^2+4e_n^j}}{2}.
\end{equation}

This shows that, for this particular operator,
$\mathcal{C}^j_{P_\infty}=\mathcal{D}^j_{P_\infty}$. It also follows from Lemma
\ref{l:conformalac=acyl} that the equation $P_\infty(e^{\gamma
z}\sigma(\theta))=0$ is equivalent to $\Delta_{\tg}(r^{\gamma}\sigma)=0$. Thus
\begin{equation}\label{eq:ac_harmonicter}
V^j_{\gamma}=\{r^\gamma\sigma(\theta): \Delta_{\tilde{g}}(r^{\gamma}\sigma)=0\},
\end{equation}
\textit{i.e.} $V^j_{\gamma}$ coincides with the space of homogeneous harmonic
functions of degree $\gamma$ on the cone $\Sigma_j\times (0,\infty)$. 

Varying the choice of eigenvalue $e_n^j$ gives the set of exceptional weights
for $P_\infty$ on the end $S_j$. Repeating this for each end defines the set
$\mathcal{D}_{P_\infty}\subset\R^e$. According to Theorem
\ref{thm:acyl_fredholm} these are the weights for which the operator $P$ is not
Fredholm with respect to the Sobolev spaces of $(L,h)$. However, recall from
Remark \ref{rem:spacescoincide} that the Sobolev spaces of $(L,g)$ and $(L,h)$
coincide. Thus $\mathcal{D}_{P_\infty}\subset\R^e$ are also the weights for
which the operators of Equations \ref{eq:extendedrescaledlaplacian},
\ref{eq:extendedlaplacian} are not Fredholm. 

\begin{remark}
Notice that in this particular case (and in the analogous case presented in
Example \ref{e:acyl_harmonic}) the generalized eigenvalue problem introduced in
Remark \ref{rem:exceptional} has reduced to an eigenvalue problem in the usual
sense.
\end{remark}

It is also fairly straight-forward to verify that, for this operator $P_\infty$,
the spaces $\widetilde{V}^j_{\gamma+i\delta}$ and $V^j_{\gamma+i\delta}$
coincide, cf. Joyce \cite{joyce:I} Proposition 2.4 for details. This allows us
to simplify the definition of the multiplicity $m(\boldsymbol{\gamma})$.

\ 

The situation for CS manifolds is similar. The change of variables $r=e^{-z}$
introduces a change of sign in Equation \ref{eq:rescaledlaplacian}. This sign is
later cancelled by a change of sign in the identification of Sobolev spaces of
$(L,g)$ and $(L,h)$. The final result is thus identical to the AC case.
Combining these results leads to the following conclusion.

\begin{corollary}\label{cor:laplaceresults}
Let $(L,g)$ be a conifold with $e$ ends. For each end $S_j$ with
link $\Sigma_j$ let $e^j_n$ denote the eigenvalues of the positive Laplace
operator $\Delta_{\Sigma_j}$ and define the set of ``exceptional weights"
$\mathcal{D}^j:=\{\gamma_j\}\subseteq\R$ as in Equation
\ref{eq:exceptionalforlaplacian}. Given any weight $\gamma\in\R$ define
$V^j_\gamma$ as in Equation \ref{eq:ac_harmonicter} and let $m^j(\gamma)$ denote
its dimension. Given any weight $\boldsymbol{\gamma}\in \R^e$ set
$m(\boldsymbol{\gamma}):=\sum_{j=1}^e m^j(\gamma_j)$. Let
$\mathcal{D}\subseteq\R^e$ denote the set of weights $\boldsymbol{\gamma}$ for
which $m(\boldsymbol{\gamma})>0$. Then each multiplicity
$m(\boldsymbol{\gamma})$ is finite and the Laplace operator 
\begin{equation}
\Delta_g:W^p_{k,\boldsymbol{\beta}}\rightarrow W^p_{k-2,\boldsymbol{\beta}-2}
\end{equation}
is Fredholm iff $\boldsymbol{\beta}\notin \mathcal{D}$.

The analogue of Theorem \ref{thm:acyl_indexchange} also holds. For example,
assume $L$ is a CS/AC manifold and write
$\boldsymbol{\beta}=(\boldsymbol{\mu},\boldsymbol{\lambda})$. Choose
$(\boldsymbol{\mu}_1,\boldsymbol{\lambda}_1),
(\boldsymbol{\mu}_2,\boldsymbol{\lambda}_2) \in \R^e\setminus \mathcal{D}$ with
$\boldsymbol{\mu}_1\geq\boldsymbol{\mu}_2$,
$\boldsymbol{\lambda}_1\leq\boldsymbol{\lambda}_2$. Then 
$$i_{\boldsymbol{\mu}_2,\boldsymbol{\lambda}_2}(\Delta_g)-i_{\boldsymbol{\mu}_1,
\boldsymbol{\lambda}_1}(\Delta_g)=\sum
m(\boldsymbol{\mu},\boldsymbol{\lambda}),$$
where the sum is taken over all
$(\boldsymbol{\mu},\boldsymbol{\lambda})\in\mathcal{D}$ such that
$\boldsymbol{\mu}_1\geq\boldsymbol{\mu}\geq\boldsymbol{\mu}_2$,
$\boldsymbol{\lambda}_1\leq\boldsymbol{\lambda}\leq\boldsymbol{\lambda}_2$.
\end{corollary}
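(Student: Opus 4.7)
The plan is to reduce everything to the A.Cyl. setting and then invoke Theorems \ref{thm:acyl_fredholm} and \ref{thm:acyl_indexchange}. The first observation is that multiplication by $\rho^2$ is an isometry $W^p_{k-2,\boldsymbol{\beta}-2}\to W^p_{k-2,\boldsymbol{\beta}}$, so $\Delta_g$ is Fredholm in the sense of Equation \ref{eq:extendedlaplacian} iff $P:=\rho^2\Delta_g$ is Fredholm as a map $W^p_{k,\boldsymbol{\beta}}\to W^p_{k-2,\boldsymbol{\beta}}$; the kernels coincide and the cokernels have equal dimension. Next I introduce the conformally related A.Cyl. metric $h:=\rho^{-2}g$. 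By Remark \ref{rem:spacescoincide} the conifold weighted Sobolev spaces on $(L,g)$ with weight $\boldsymbol{\beta}$ coincide, up to equivalent norms, with the A.Cyl. weighted Sobolev spaces on $(L,h)$, where on each AC end the A.Cyl. weight is $\beta_j$ and on each CS end it is $-\beta_j$. Combining Equation \ref{eq:laplace} with Lemma \ref{l:conformalac=acyl} and the decay estimates of Remark \ref{rem:metrics_ends} shows that $P$ is asymptotic in the sense of Definition \ref{def:acyl_bundles} to the translation-invariant operator $P_\infty^j:=-(\partial z)^2+(2-m)\partial z+\Delta_{\Sigma_j}$ on each AC end and to $-(\partial z)^2-(2-m)\partial z+\Delta_{\Sigma_j}$ on each CS end, after the substitutions $r=e^z$ and $r=e^{-z}$ respectively.

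Theorem \ref{thm:acyl_fredholm} then gives Fredholmness on the A.Cyl. spaces precisely outside the exceptional set of $P_\infty$. To compute this set, I insert $e^{(\gamma+i\delta)z}\sigma(\theta)$ into $P_\infty^j$ and obtain
\[
\Delta_{\Sigma_j}\sigma=\bigl[(\gamma+i\delta)^2+(m-2)(\gamma+i\delta)\bigr]\sigma.
\]
Self-adjointness of $\Delta_{\Sigma_j}$ and non-negativity of its eigenvalues $e_n^j$ force the bracketed expression to be real and non-negative: taking imaginary parts yields $\delta[2\gamma+(m-2)]=0$, and the alternative $\gamma=(2-m)/2$ produces a real part $-(2-m)^2/4-\delta^2$ which, under the standing assumption $m\geq 3$, is incompatible with $e_n^j\geq 0$. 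Hence $\delta=0$ and $\gamma$ solves the quadratic of Equation \ref{eq:exceptionalforlaplacian}. The CS case is handled identically: the first-order sign flip in $P_\infty$ is precisely cancelled by the opposite sign in the correspondence between A.Cyl. and conifold weights on CS ends, so the same set of exceptional weights arises in conifold coordinates.

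For the multiplicity, Lemma \ref{l:conformalac=acyl} applied to a solution $e^{\gamma z}\sigma$ shows that under $r=e^{\pm z}$ it corresponds exactly to a homogeneous harmonic function $r^\gamma\sigma(\theta)$ on the cone, giving the identification $V^j_\gamma$ of Equation \ref{eq:ac_harmonicter}. Finiteness of $m^j(\gamma)$ follows because $\sigma$ lies in the eigenspace of the elliptic operator $\Delta_{\Sigma_j}$ on the compact manifold $\Sigma_j$, which is finite-dimensional; invoking Joyce \cite{joyce:I} Proposition 2.4 one further sees that $\widetilde V^j_\gamma=V^j_\gamma$ in this specific setting, so the multiplicities of Definition \ref{def:indexchange} agree with those defined in the corollary. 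Summing over the finitely many ends then yields $m(\boldsymbol{\gamma})<\infty$.

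The change-of-index formula follows by applying Theorem \ref{thm:acyl_indexchange} to $P$ on $(L,h)$ between the A.Cyl. weights corresponding to $(\boldsymbol{\mu}_1,\boldsymbol{\lambda}_1)$ and $(\boldsymbol{\mu}_2,\boldsymbol{\lambda}_2)$. Under the identification of Remark \ref{rem:spacescoincide}, an AC weight $\boldsymbol{\lambda}$ corresponds to A.Cyl. weight $\boldsymbol{\lambda}$, while a CS weight $\boldsymbol{\mu}$ corresponds to A.Cyl. weight $-\boldsymbol{\mu}$; consequently the A.Cyl. weight increases precisely when $\boldsymbol{\lambda}$ increases and $\boldsymbol{\mu}$ decreases, which is exactly the sign convention in the stated inequalities. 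The main obstacle throughout is bookkeeping: carefully tracking (i) the weight shift by $2$ introduced by the factor $\rho^2$, (ii) the orientation reversal on CS ends when passing between conifold and A.Cyl. weight conventions, and (iii) the asymptotic decay of $P-P_\infty$ (with negative convergence rate) coming from Remark \ref{rem:metrics_ends}. None of these steps is deep, but keeping the conventions consistent across both AC and CS ends is where mistakes could easily propagate.
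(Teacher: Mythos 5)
Your proposal is correct and follows essentially the same route as the paper: conformal rescaling to the A.Cyl. manifold $(L,\rho^{-2}g)$, the isometry given by multiplication by $\rho^2$, Lemma \ref{l:conformalac=acyl} to identify $P_\infty$, the eigenvalue computation on the link forcing $\delta=0$, and the sign cancellation on CS ends under $r=e^{-z}$. The only difference is that you spell out the $\delta=0$ argument in more detail than the paper does, which is a harmless (indeed welcome) elaboration.
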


In the same way one can also prove the analogue of Proposition
\ref{prop:acylindep}.


\section{Application: harmonic functions on conifolds}\label{s:accs_harmonic}
We can use the results of Sections \ref{s:weightcrossing} and
\ref{s:accs_analysis} to reach a good understanding of the properties of the
Laplace operator acting on functions on conifolds. Specifically, we will
be interested in the kernel and cokernel of $\Delta_g$. 
\subsubsection*{Smooth compact manifolds} Let $(L,g)$ be a smooth compact
Riemannian manifold. Let $\Delta_g$ denote the positive Laplace operator on
functions. Consider the map
\begin{equation}\label{eq:cptextendedlaplacian}
\Delta_g:W^p_k(L)\rightarrow W^p_{k-2}(L).
\end{equation}
For all $p>1$ and $k\in\Z$, standard elliptic regularity shows that any
$f\in\mbox{Ker}(\Delta_g)$ is smooth. The maximum principle then proves that $f$
is constant. Thus $\mbox{Ker}(\Delta_g)=\R$, independently of the choice of
$p,k$.
 
As seen in Section \ref{s:prelim}, $f\in\mbox{Im}(\Delta_g)$ iff $<u,f>=0$, for
all $u\in \mbox{Ker}(\Delta_g^*)$, where $\Delta_g^*$ is the operator dual to that of
Equation \ref{eq:cptextendedlaplacian}. As in Lemma \ref{l:*=*} we can identify
this with the formal adjoint operator. However, $\Delta_g$ is formally
self-adjoint, \textit{i.e.} the operators $\Delta_g$ and $\Delta_g^*$ coincide
on smooth functions. By continuity they continue to coincide when extended to
any Sobolev space. Thus $\mbox{Ker}(\Delta_g^*)=\mbox{Ker}(\Delta_g)=\R$. As in
Equation \ref{eq:duality} we find $<u,f>=\int_Luf\,\mbox{vol}_g$. It follows
that $\mbox{Im}(\Delta_g)=\{f\in W^p_{k-2}(L):\int_Lf\,\mbox{vol}_g=0\}$. In
particular, $\Delta_g$ has index zero.
\subsubsection*{AC manifolds} Let $(L,g)$ be a AC manifold with convergence
rate $\boldsymbol{\nu}<0$ as in Definition \ref{def:metrics_ends}. Let
$\Delta_g$ denote the positive Laplace operator on weighted Sobolev spaces of
functions, as in Equation \ref{eq:extendedlaplacian}. For simplicity, we will
restrict our attention to the case of $L$ with 2 ends. 

Each end defines exceptional weights, plotted as points on the horizontal and
vertical axes of Figure \ref{fig:1}. Each exceptional weight gives rise to an
exceptional hyperplane, plotted as a vertical or horizontal line. The Laplacian
is Fredholm for weights $\boldsymbol{\beta}=(\beta_1,\beta_2)$ which are
non-exceptional, \textit{i.e.} which do not lie on these lines. The arrow
indicates the direction in which the corresponding Sobolev spaces, thus the
kernel of $\Delta_g$, become bigger.

Choose $\boldsymbol{\beta}$ non-exceptional. For all $p>1$ and $k\in\Z$,
standard elliptic regularity proves that any $f\in\mbox{Ker}(\Delta_g)$ is
smooth. Furthermore, since $\mbox{Ker}(\Delta_g)$ is independent of $p$ and $k$,
the Sobolev Embedding Theorems show that $f$ has growth of the order
$O(r^{\boldsymbol{\beta}})$. If $\boldsymbol{\beta}<0$ we can thus apply the
maximum principle to conclude that $f\equiv 0$. In other words, $\Delta_g$ is
injective throughout the quadrant defined by the lower shaded region. Since
$\Delta_g$ is formally self-adjoint, the same holds for $\Delta_g^*$. Recall
from Section \ref{s:accs_analysis} how weights on AC manifolds change under
duality. We conclude, following Section \ref{s:prelim}, that
$\mbox{Coker}(\Delta_g)=0$ for $\boldsymbol{\beta}>2-m$. In other words,
$\Delta_g$ is surjective throughout the quadrant defined by the upper shaded
region. In particular, the map of Equation \ref{eq:extendedlaplacian} is an
isomorphism and has index zero for $2-m<\boldsymbol{\beta}<0$, \textit{i.e.} in
the region marked by A.

When $\boldsymbol{\beta}>2-m$ the cokernel is independent of the weight. Thus,
any
change of index corresponds entirely to a change of kernel. Furthermore,
$\mbox{Ker}(\Delta_g)=\mbox{Ker}(\rho^2\Delta_g)$. We can thus use the results
of Section \ref{s:weightcrossing} 
to study how the kernel changes as $\boldsymbol{\beta}$
increases. For example, assume we are interested in harmonic functions for some
(thus any) $\boldsymbol{\beta}$ in the region B. We can reach this region by
keeping $\beta_2$ fixed and repeatedly increasing $\beta_1$, starting from the
region A. Each time we cross an exceptional line $x=\gamma$, new harmonic
functions on $(L,g)$ are generated by elements $r^\gamma\sigma(\theta)\in
V^1_\gamma$. Specifically, these new harmonic functions will be asymptotic to
$r^\gamma\sigma$ on the first end and to zero on the second end. Using the ideas
of Section \ref{s:weightcrossing} we can further show that the lower-order terms
will have rate $O(r^{\gamma+\nu_1})$ on the first end and $O(r^{\nu_2})$ on the
second. Analogous results hold for harmonic functions for $\boldsymbol{\beta}$
in the region C. The construction shows that the harmonic functions in the
regions B and C are linearly independent. We can thus apply the change of index
formula to show that harmonic functions in the generic region D are generated by
linear combinations of harmonic functions in the regions B, C. 

It may be good to emphasize that the above constructions depend on the specific
$(L,g)$ only in terms of the specific exceptional weights, but are otherwise
completely independent of $(L,g)$. However, these constructions fail if D is
chosen outside the region where $\Delta_g$ is surjective. 

\begin{figure}
\includegraphics[width=90mm,height=90mm]{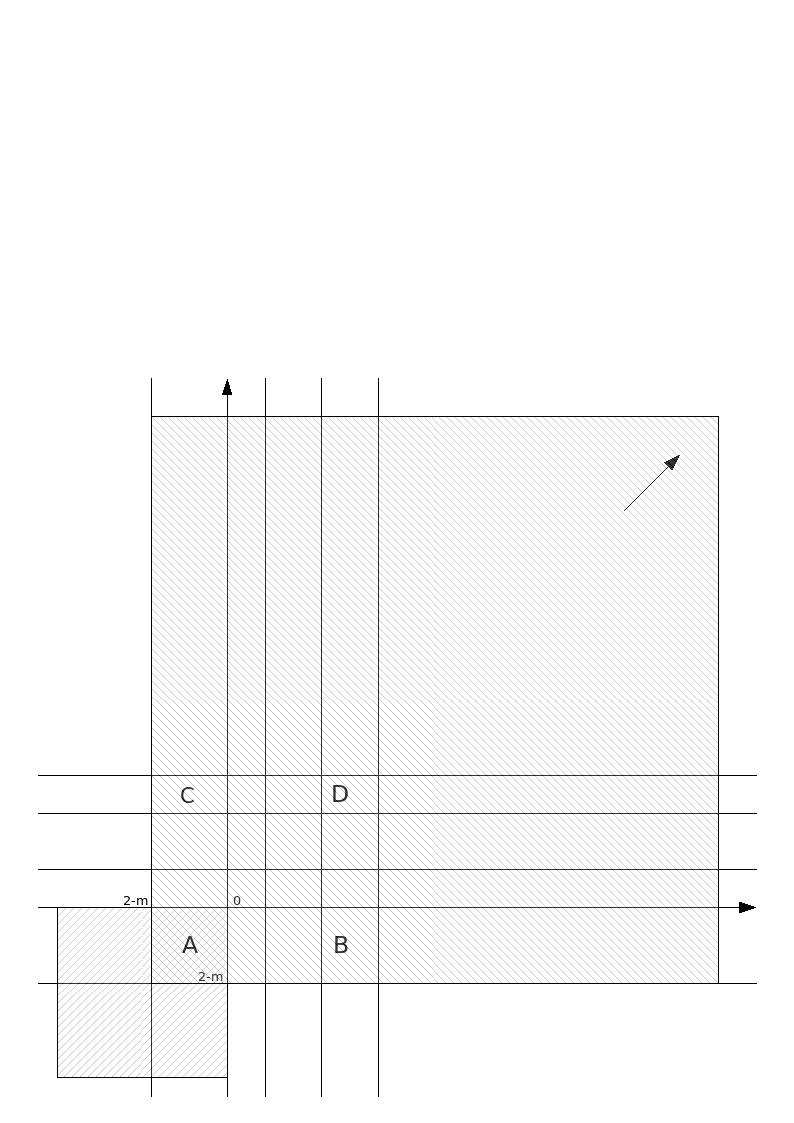}
\caption{Harmonic functions on AC manifolds}\label{fig:1}
\end{figure}
\subsubsection*{CS manifolds} Let $(L,g)$ be a CS manifold with convergence
rate $\boldsymbol{\nu}>0$ as in Definition \ref{def:metrics_ends}. As before,
let $\Delta_g$ denote the positive Laplace operator on weighted Sobolev spaces
of functions, as in Equation \ref{eq:extendedlaplacian}. We again restrict our
attention to the case of $L$ with 2 ends.

Figure \ref{fig:2} plots the exceptional weights and lines in this case. Once
again the arrow indicates the direction in which the corresponding Sobolev
spaces, thus the kernel of $\Delta_g$, become bigger. Choose
$\boldsymbol{\beta}$ non-exceptional. As before, any $f\in \mbox{Ker}(\Delta_g)$
is smooth with growth of order $O(r^{\boldsymbol{\beta}})$. If
$\boldsymbol{\beta}>0$ the maximum principle shows that $f=0$. Now assume
$\boldsymbol{\beta}=\frac{2-m}{2}$. In this case
$(W^2_{k-2,\boldsymbol{\beta}-2})^*=W^2_{2-k,\boldsymbol{\beta}}$. Choose $f\in
W^2_{k,\boldsymbol{\beta}}$ and assume $\Delta_gf=0$. Then, choosing $u=v=f$ in
Lemma \ref{l:intbyparts} and using regularity, we can conclude $df=0$ so $f$ is
constant. This shows that, for any weight in the region A,
$\mbox{Ker}(\Delta_g)=\R$.
As before we also find that, in this region, $\mbox{Im}(\Delta_g)=\{f\in
W^p_{k-2,{\beta}-2}:\int_Lf\,\mbox{vol}_g=0\}$. In particular, the index of
$\Delta_g$ is zero. 

Now assume $(\beta_1,\beta_2)>(0,\frac{2-m}{2})$. Then
$W^p_{k,\boldsymbol{\beta}}\subset W^p_{k,(\frac{2-m}{2},\frac{2-m}{2})}$ so our
integration by parts argument remains valid. On the other hand the only constant
function in $W^p_{k,\boldsymbol{\beta}}$ is zero so in this case we find that
$\Delta_g$ is injective. The same holds for
$(\beta_1,\beta_2)>(\frac{2-m}{2},0)$. Thus $\Delta_g$ is injective in the upper
shaded region. By duality we deduce that $\Delta_g$ is surjective in the lower
shaded region.

Now assume $\boldsymbol{\beta}$ crosses from A to B. In this particular case the
method used above for AC manifolds fails, because it would require $\Delta_g$ to
be surjective in the region A. We can however bypass this problem as follows:
the change of index formula shows that the index increases by one and we know
that the Laplacian is surjective in B, so $\mbox{Ker}(\Delta_g)=\R$ in B. The
same is true for the region C. We can use Section \ref{s:weightcrossing} to
study the harmonic functions in the lower shaded region. For example, the
harmonic functions in D will be generated by functions which are of the form
$r^\gamma\sigma+O(r^{\gamma+\nu_1})$ on the first end and of the form
$O(r^{\nu_2})$ on the second end. Notice a difference with respect to AC
manifolds: harmonic functions in B and C (more generally, in D and E) are not
necessarily linearly independent. Thus we cannot write harmonic functions in F
as the direct sum of harmonic functions in D and E, as in the AC case. Once
again, harmonic functions elsewhere will be heavily dependent on the specific
$(L,g)$.

We may also be interested in the cokernel of $\Delta_g$. The change of index
formula shows that the dimension of the cokernel increases with $\beta$. For
example, the index is -1 in the regions G,H. Since $\Delta_g$ is injective here
this implies that the cokernel has dimension 1. More generally, the change of
index formula allows us to compute the dimension of the cokernel wherever
$\Delta_g$ is injective. We can also use the ideas of Remark
\ref{rem:nestlingtheker} to build complements of $\mbox{Im}(\Delta_g)$ which
grow with $\boldsymbol{\beta}$.

\begin{figure}
\includegraphics[width=90mm,height=90mm]{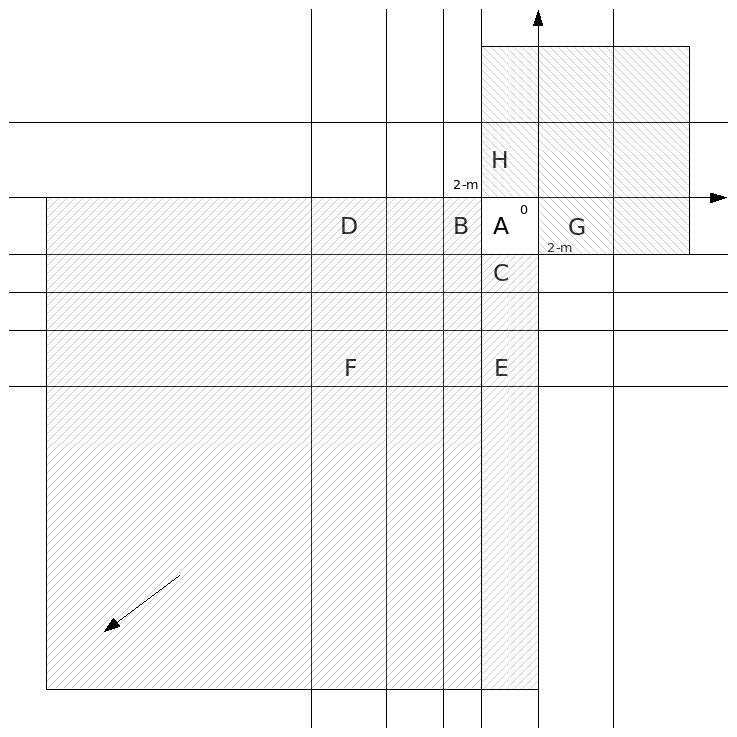}
\caption{Harmonic functions on CS manifolds}\label{fig:2}
\end{figure} 
\subsubsection*{CS/AC manifolds} Let $(L,g)$ be a CS/AC manifold with
convergence rate $\boldsymbol{\nu}$. Following the same conventions as before,
we now turn to Figure \ref{fig:3}. Here, the horizontal axis corresponds to the
CS end with weight $\mu$ and the vertical axis corresponds to the AC end with
weight $\lambda$. 

When $\lambda<0$ and $\mu>2-m$, the maximum principle and integration by parts
show that $\Delta_g$ is injective. Dually, when $\lambda>2-m$ and $\mu<0$,
$\Delta_g$ is surjective. In the region A, $\Delta_g$ is an isomorphism with
index zero. Harmonic functions in the region B are of the form
$r^\gamma\sigma+O(r^{\gamma+\nu_2})$ on the AC end and of the form
$O(r^{\nu_1})$ on the CS end. Harmonic functions in the region C are of the form
$r^\gamma\sigma+O(r^{\gamma+\nu_1})$ on the CS end and of the form
$O(r^{\nu_2})$ on the AC end. Since these functions are linearly independent,
their linear combinations give the harmonic functions in the region D.

\begin{figure}
\includegraphics[width=90mm,height=90mm]{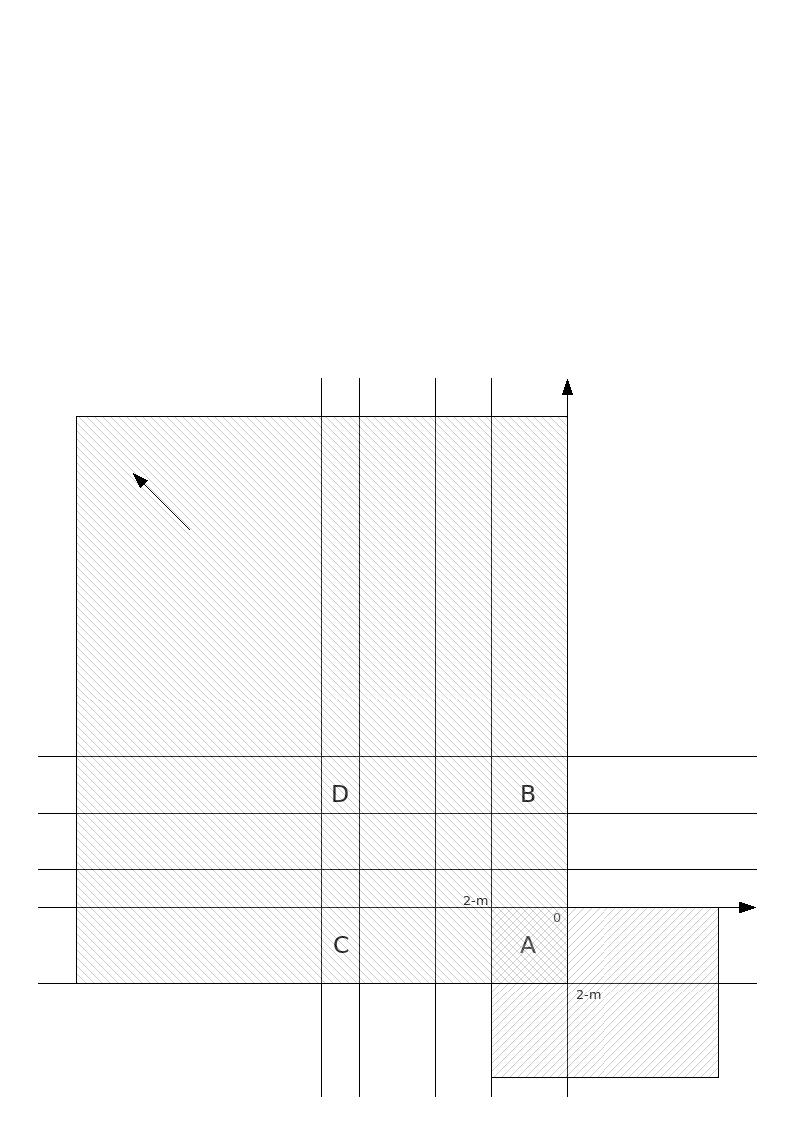}
\caption{Harmonic functions on CS/AC manifolds}\label{fig:3}
\end{figure}

\begin{example} 
$\R^m$ with its standard metric can be viewed as a CS/AC manifold, the CS end
being a neighbourhood of the origin. In this case all harmonic functions can be
written explicitly, so in this case we have exact information on their
asymptotics.
\end{example}


\part{Conifold connect sums and uniform estimates}

Ths is the main part of the paper. Our goal is to introduce a certain ``parametric connect sum''
construction between conifolds; as mentioned in the Introduction, this is the
abstract analogue of
certain desingularization procedures used in Differential Geometry, in which an
isolated conical singularity is
replaced by something smooth or perhaps by a new collection of AC or CS ends. We will show that careful choices of parameters and weights lead to uniform estimates concerning both Sobolev Embedding Theorems and the Laplace operator. These estimates are at the heart of the paper \cite{pacini:slgluing}. Readers interested in specific applications of these estimates can thus refer there for details.  

\section{Conifold connect sums}\label{s:sums_sobolev}

The goal of this section is to define the ``parametric connect sum''
construction and prove that the scaled and weighted Sobolev
constants
are independent of the parameter $\boldsymbol{t}$. For simplicity we start with
the
non-parametric version.

\begin{definition}\label{def:marking}
Let $(L,g)$ be a conifold, not necessarily connected. Let $S$ denote
the union of its ends. A subset $S^*$ of $S$ defines a \textit{marking} on $L$.
We can then write $S=S^*\amalg S^{**}$, where $S^{**}$ is simply the complement
of $S^*$. We say $S^*$ is a \textit{CS-marking} if all ends in $S^*$ are CS; it
is an \textit{AC-marking} if all ends in $S^*$ are AC. We will denote by $d$
the number of ends in $S^*$.

If $L$ is weighted via $\boldsymbol{\beta}$ we require that
$\beta_i=\beta_j$ if $S_i$ and $S_j$ are marked ends belonging to the same
connected component of $L$.
\end{definition}

\begin{definition}\label{def:compatible}
 Let $(L,g,S^*)$ be a CS-marked conifold. Let
$\Sigma^*$, $C^*$ denote the
links and cones corresponding to $S^*$, as in Definition
\ref{def:metrics_ends}. Given any end $S_i\subseteq S^*$ let
$\phi_i:\Sigma_i\times (0,\epsilon]\rightarrow \overline{S_i}$ be the
diffeomorphism of
Definition \ref{def:metrics_ends}. 

Let $(\hat{L},\hat{g},\hat{S}^*)$ be an AC-marked
conifold. Let $\hat{\Sigma}^*$, $\hat{C}^*$,
$\hat{\phi}_i:\hat{\Sigma}_i\times
[\hat{R},\infty)\rightarrow \overline{\hat{S}_i}$ denote the corresponding
links, cones and diffeomorphisms, as above. 

We say that $L$ and $\hat{L}$ are \textit{compatible} if they satisfy the
following assumptions:
\begin{enumerate}
\item $C^*=\hat{C}^*$. Up to relabelling the ends, we may assume that $C_i^*=\hat{C}_i^*$.
\item $\hat{R}<\epsilon$.  We can then identify
appropriate
subsets of $S^*$ and $\hat{S}^*$ via the maps $\hat{\phi}_i\circ\phi_i^{-1}$. 
\item On each marked AC end,
the metrics $\hat{\phi}_i^*\hat{g}$ and $\tg_i$ are scaled-equivalent in the
sense of Definition \ref{def:equivalentscaledmetrics}. Analogously, on each
marked CS end,
the metrics $\phi_i^*g$ and $\tg_i$ are scaled-equivalent in the
sense of Definition \ref{def:equivalentscaledmetrics}. 
\end{enumerate}
If
$L$ is weighted via $\boldsymbol{\beta}$ and $\hat{L}$ is weighted via
$\hat{\boldsymbol{\beta}}$ we further require that, on the marked ends, the
corresponding constants satisfy
$\beta_i=\hat{\beta}_i$ and that $\hat{\beta}_i=\hat{\beta}_j$ if $\hat{S}_i$ and $\hat{S}_j$ are marked ends in the same connected component of $\hat{L}$.
\end{definition}

\begin{remark}\label{rem:compatible}
The condition $\hat{R}<\epsilon$ may seem rather strong. However, let
$(L,g,S^*)$ be CS-marked, $(\hat{L},\hat{g},\hat{S}^*)$ be AC-marked
and $C^*=\hat{C}^*$. As seen in Remark \ref{rem:metrics_ends}, by making
$\hat{R}$ larger if necessary it is possible to assume that the
metrics $\hat{\phi}_i^*\hat{g}$, $\tg_i$ on $\Sigma_i\times [\hat{R},\infty)$
are scaled-equivalent in the
sense of Definition \ref{def:equivalentscaledmetrics}. Lemma
\ref{l:rescaledconifold} then shows that the metrics
$\hat{\phi}_{t,i}^*(t^2\hat{g})$, $\tg_i$ on
$\Sigma_i\times [t\hat{R},\infty)$ are also scaled-equivalent, with the same
bounds. Analogously, by making $\epsilon$ smaller if necessary, we can assume 
that the metrics $\phi_i^*g$, $\tg_i$ on $\Sigma_i\times
(0,\epsilon]$ are scaled-equivalent. By first making
$\hat{R}$ large and $\epsilon$ small and then rescaling to satisfy the
condition $\hat{R}<\epsilon$ we thus obtain 
compatible conifolds in the sense of Definition \ref{def:compatible}.
\end{remark}

\begin{definition}\label{def:connectsum}
 Let $(L,g,S^*)$, $(\hat{L},\hat{g},\hat{S}^*)$ be
compatible marked conifolds. We define the \textit{connect sum}
of $L$ and $\hat{L}$ as follows. We set
\begin{equation}
\hat{L}\#L:=(\hat{L}\setminus\hat{S}^*)\cup
(\Sigma^*\times[\hat{R},\epsilon])\cup (L\setminus S^*), 
\end{equation}
where the boundary of $\hat{L}\setminus\hat{S}^*$ is identified with
$\Sigma^*\times\{\hat{R}\}$ via the maps $\hat{\phi}_i$ and the boundary of
$L\setminus S^*$ is identified with $\Sigma^*\times\{\epsilon\}$ via
the maps $\phi_i$. We can endow this manifold with any metric $\hat{g}\#g$ which
restricts to $\hat{g}$ on $\hat{L}\setminus\hat{S}^*$ and to $g$ on
$L\setminus{S}^*$. Then $\hat{L}\#L$ is a conifold. Its ends are
$\hat{S}^{**}\amalg S^{**}$. We
call $\Sigma^*\times[\hat{R},\epsilon]$ the \textit{neck region} of
$\hat{L}\#L$.

Given radius functions $\rho$ on $L$ and $\hat{\rho}$ on $\hat{L}$ we can endow
$\hat{L}\#L$ with the radius function
\begin{equation*}
 \hat{\rho}\#\rho:=\left\{
\begin{array}{ll}
 \hat{\rho} & \mbox{ on } \hat{L}\setminus\hat{S}^*\\
r & \mbox{ on } \Sigma^*\times[\hat{R},\epsilon]\\
\rho & \mbox{ on }L\setminus S^*.
\end{array}\right.
\end{equation*}
If $L$, $\hat{L}$ are weighted via $\boldsymbol{\beta}$,
$\hat{\boldsymbol{\beta}}$ then $\hat{L}\#L$ is weighted via the function
\begin{equation*}
 \hat{\boldsymbol{\beta}}\#\boldsymbol{\beta}:=\left\{
\begin{array}{ll}
 \hat{\boldsymbol{\beta}} & \mbox{ on } \hat{L}\setminus\hat{S}^*\\
\boldsymbol{\beta}_{|S^*} & \mbox{ on } \Sigma^*\times[\hat{R},\epsilon]\\
\boldsymbol{\beta} & \mbox{ on }L\setminus S^*.
\end{array}\right.
\end{equation*}
\end{definition}

\begin{example}\label{e:connectsum}
Let $\overline{L}$ be a smooth $m$-dimensional submanifold of
$\R^n$, endowed with the induced metric. Assume that it is either compact or
that it has AC ends: \textit{e.g.}, it could be a collection of $m$-planes in
$\R^n$. Now assume it has transverse self-intersection
points $x_1,\dots,x_k\in \R^n$. For each $x_i$ choose a ball $B(x_i,\epsilon)$
in $\R^n$. Then $L:=\overline{L}\setminus \{x_1,\dots,x_k\}$ is a conifold with
$s$ CS ends defined by the connected components of $\left(
B(x_1,\epsilon)\cup\dots\cup B(x_k,\epsilon)\right)\cap L$. The corresponding
cones are copies of $\R^m$. Choose a pair $S_1$, $S_2$ of
connected components of $B(x_1,\epsilon)\cap L$ and an appropriately rescaled
$m$-dimensional hyperboloid $\hat{L}\subseteq \R^n$ asymptotic to the
corresponding cones $C_1$,
$C_2$. Then $L$, $\hat{L}$ are compatible and $\hat{L}\#L$ is an abstract
Riemannian manifold which we can think of as a desingularization of
$\overline{L}$. Our hypothesis in Definition \ref{def:marking} that $L$,
$\hat{L}$ are not necessarily connected allows us to extend this construction to
intersection points of distinct submanifolds and to desingularize all points
simultaneously.
\end{example}

Since $\hat{L}\#L$ is again a conifold it is clear that all versions of the
Sobolev Embedding Theorems continue to hold for it. Notice that
$\hat{S}^{**}\cup S^{**}$ might also be empty: in this case $\hat{L}\#L$ is
a smooth compact manifold. We now consider the parametric version of this
construction.
 
\begin{definition}\label{def:tconnectsum}
 Let $(L,g,S^*)$, $(\hat{L},\hat{g},\hat{S}^*)$ be
compatible marked conifolds with $d$ marked ends. Let
$(\rho,\boldsymbol{\beta})$, respectively
$(\hat{\rho},\hat{\boldsymbol{\beta}})$, be corresponding radius functions and
weights. Choose parameters $\boldsymbol{t}=(t_1,\dots,t_d)>0$ sufficiently
small. We assume that $\boldsymbol{t}$ is compatible with the
decomposition of $\hat{L}$ into its connected components: specifically, that
$t_i=t_j$ if $\hat{S}_i$ and $\hat{S}_j$ belong to the same connected component
of $\hat{L}$. We then define the
\textit{parametric connect sum}
of $L$ and $\hat{L}$ as follows. We set
\begin{equation*}
L_{\boldsymbol{t}}:=(\hat{L}\setminus\hat{S}^*)\cup
(\cup_{\Sigma_i\subseteq\Sigma^*}\Sigma_i\times[t_i\hat{R},\epsilon])\cup
(L\setminus S^*), 
\end{equation*}
where the components of the boundary of $\hat{L}\setminus\hat{S}^*$ are
identified with
the $\Sigma_i\times\{t_i\hat{R}\}$ via maps $\hat{\phi}_{t_i,i}$ defined as in
Lemma
\ref{l:rescaledconifold} and the components of the boundary of $L\setminus S^*$
are identified
with the $\Sigma_i\times\{\epsilon\}$ via the maps $\phi_i$. Choose
$\tau\in (0,1)$. If the $t_i$ are sufficiently small, we find
$t_i\hat{R}<t_i^\tau<2t_i^\tau<\epsilon$. Choose any
metric $g_{\boldsymbol{t}}$ on $L_{\boldsymbol{t}}$ such that, for each
$\Sigma_i\subseteq \Sigma^*$,
\begin{equation*}
 g_{\boldsymbol{t}}:=\left\{
\begin{array}{llll}
 t_i^2\hat{g} & \mbox{ on the corresponding component of }
\hat{L}\setminus\hat{S}^*\\
\hat{\phi}_{t_i,i}^*(t_i^2\hat{g}) & \mbox{ on
}\Sigma_i\times[t_i\hat{R},t_i^\tau]\\
\phi_i^*g & \mbox{ on } \Sigma_i\times[2t_i^\tau,\epsilon]\\
g & \mbox{ on } L\setminus S^*
\end{array}\right.
\end{equation*}
and such that, for all $j\geq 0$ and as
$\boldsymbol{t}\rightarrow 0$,
$$\sup_{\Sigma_i\times[t_i^\tau,2t_i^\tau]}
|\tnabla^j(g_{\boldsymbol{t}}-\tg_i)|_{
r^ { -2 } \tg_i\otimes\tg_i} \rightarrow 0.$$
 We endow $L_{\boldsymbol{t}}$ with the radius function
\begin{equation*}
 \rho_{\boldsymbol{t}}:=\left\{
\begin{array}{ll}
 t_i\hat{\rho} & \mbox{ on the corresponding component of }
\hat{L}\setminus\hat{S}^*\\
r & \mbox{ on } \Sigma_i\times[t_i\hat{R},\epsilon]\\
\rho & \mbox{ on }L\setminus S^*
\end{array}\right.
\end{equation*}
and the weight
\begin{equation*}
 \boldsymbol{\beta}_{\boldsymbol{t}}:=\left\{
\begin{array}{ll}
 \hat{\boldsymbol{\beta}} & \mbox{ on } \hat{L}\setminus\hat{S}^*\\
\beta_i & \mbox{ on } \Sigma_i\times[t_i\hat{R},\epsilon]\\
\boldsymbol{\beta} & \mbox{ on }L\setminus S^*.
\end{array}\right.
\end{equation*}
We now need to define the weight function $w_{\boldsymbol{t}}$. As in Corollary \ref{cor:rescaledconifold}, the simplest
case is when $\hat{\boldsymbol{\beta}}$ is constant on each connected
component of $\hat{L}$. We then define
\begin{equation*}
 w_{\boldsymbol{t}}:=\rho_t^{-\boldsymbol{\beta}_t}=\left\{
\begin{array}{ll}
 (t_i\hat{\rho})^{-\hat{\beta}_i} & \mbox{ on the corresponding
component of }
\hat{L}\setminus\hat{S}^*\\
r^{-\beta_i} & \mbox{ on } \Sigma_i\times[t_i\hat{R},\epsilon]\\
\rho^{-\boldsymbol{\beta}} & \mbox{ on }L\setminus S^*.
\end{array}\right.
\end{equation*}
For general weights $\hat{\boldsymbol{\beta}}$ we need to modify the
weight function. As in Corollary \ref{cor:rescaledconifold}, on the $i$-th component of
$\hat{L}$ consider the constant ``reference'' weight
$\hat{\beta}_i$. We then define
\begin{equation*}
 w_{\boldsymbol{t}}:=\left\{
\begin{array}{ll}
 (t_i^\frac{\hat{\beta}_i-\hat{\boldsymbol{\beta}}}{\hat{\boldsymbol{\beta}}}t_i\hat{\rho})^{-\hat{
\boldsymbol {\beta}} }  & \mbox{ on the
corresponding component of }\hat{L}\setminus\hat{S}^*\\
r^{-\beta_i} & \mbox{ on } \Sigma_i\times[t_i\hat{R},\epsilon]\\
\rho^{-\boldsymbol{\beta}} & \mbox{ on }L\setminus S^*.
\end{array}\right.
\end{equation*}
We may equivalently write this as
\begin{equation*}
w_{\boldsymbol{t}}:=\left\{
\begin{array}{ll}
t_i^{\hat{\boldsymbol{\beta}}-\hat{\beta}_i}\rho_{\boldsymbol{t}}^{-\boldsymbol{\beta}_{\boldsymbol{t}}}&\mbox{ on }\hat{L}\setminus \hat{S}^*\\
\rho_{\boldsymbol{t}}^{-\boldsymbol{\beta}_{\boldsymbol{t}}}&\mbox{ elsewhere}.
\end{array}\right.
\end{equation*}
Using this data we now define weighted Sobolev spaces
$W^p_{k,\boldsymbol{\beta}_{\boldsymbol{t}}}$ on $L_{\boldsymbol{t}}$ as in
Section \ref{s:weighted}. We call $\Sigma_i\times[t_i\hat{R},\epsilon]$ the
\textit{neck regions} of $L_{\boldsymbol{t}}$.
\end{definition}

\begin{theorem}\label{thm:normstequivalent}
 Let $(L,g,S^*)$, $(\hat{L},\hat{g},\hat{S}^*)$ be
compatible weighted marked conifolds. Define $L_{\boldsymbol{t}}$, $g_{\boldsymbol{t}}$,
$\rho_{\boldsymbol{t}}$ and
$\boldsymbol{\beta}_{\boldsymbol{t}}$
as in Definition \ref{def:tconnectsum}. Then all
forms of the weighted Sobolev Embedding Theorems hold uniformly in
${\boldsymbol{t}}$,
\textit{i.e.} the corresponding Sobolev constants are independent of
${\boldsymbol{t}}$.
\end{theorem}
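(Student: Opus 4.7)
The plan is to reduce the theorem to a direct verification of the hypotheses of Theorem \ref{thm:weighted_ok} on $(L_{\boldsymbol{t}},g_{\boldsymbol{t}},\rho_{\boldsymbol{t}},w_{\boldsymbol{t}})$, with constants $R_1,R_2,R_3,R_4,\zeta$ independent of $\boldsymbol{t}$. Once this is established, Theorem \ref{thm:weighted_ok} immediately yields the Sobolev Embedding Theorems with uniform constants.

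First I would decompose $L_{\boldsymbol{t}}$ into three overlapping pieces: the \emph{outer piece} $L\setminus S^*$ carrying $(g,\rho)$, the \emph{neck pieces} $\Sigma_i\times[t_i\hat R,\epsilon]$ carrying the radius $r$, and the \emph{inner piece} $\hat L\setminus \hat S^*$ carrying $(t_i^2\hat g,t_i\hat\rho)$. On the outer piece the assumptions A1--A3 on $\rho$ and the bounded-variation condition on $w=\rho^{-\boldsymbol{\beta}}$ are simply part of the data of the weighted conifold $(L,g,\rho,\boldsymbol{\beta})$ (Example \ref{e:radius}, Corollary \ref{cor:embedding}). On the inner piece, Lemma \ref{l:rescaledconifold} and Remark \ref{rem:tscaled_ok}/Corollary \ref{cor:rescaledconifold} show that the rescaled data $(\hat L,t_i^2\hat g,t_i\hat\rho)$ satisfies A1--A3 with the same constants as $(\hat L,\hat g,\hat\rho)$, since these assumptions are scale-invariant. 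On the neck, since $g_{\boldsymbol{t}}$ is by construction $\boldsymbol{t}$-uniformly scaled-equivalent to the conical metric $\tg_i$ (using Definition \ref{def:compatible} on one side, Lemma \ref{l:rescaledconifold} on the other, and the interpolation clause on $[t_i^\tau,2t_i^\tau]$), A1--A3 for $r$ on $\tg_i$ (Example \ref{e:radius}(2)) transfer to uniform bounds for $\rho_{\boldsymbol{t}}=r$ on $g_{\boldsymbol{t}}$.

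The transitions between the three pieces are automatic: $\rho_{\boldsymbol{t}}$ is smooth and equals $r$ on overlaps, and the neck metric $\hat\phi_{t_i,i}^*(t_i^2\hat g)$ equals $t_i^2\hat g$ on the relevant subset of $\hat L\setminus \hat S^*$ by construction, so the injectivity and Ricci bounds match. For the weight function, the critical point is to simplify
\[
w_{\boldsymbol{t}}=\bigl(t_i^{(\hat\beta_i-\hat{\boldsymbol{\beta}})/\hat{\boldsymbol{\beta}}}\,t_i\hat\rho\bigr)^{-\hat{\boldsymbol{\beta}}}
= t_i^{\hat{\boldsymbol{\beta}}-\hat\beta_i}\,(t_i\hat\rho)^{-\hat{\boldsymbol{\beta}}}
= t_i^{-\hat\beta_i}\,\hat\rho^{-\hat{\boldsymbol{\beta}}}
\]
on each component of $\hat L\setminus\hat S^*$. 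Thus $w_{\boldsymbol{t}}$ differs from the original weight $\hat\rho^{-\hat{\boldsymbol{\beta}}}$ only by the global constant $t_i^{-\hat\beta_i}$, so its bounded-variation constant $R_4$ is exactly the one for $(\hat L,\hat g,\hat\rho,\hat{\boldsymbol{\beta}})$. On the neck $w_{\boldsymbol{t}}=r^{-\beta_i}$ has bounded variation from that of $r$; and at the neck/inner transition $\hat\rho=\hat R$, the compatibility $\hat\beta_i=\beta_i$ gives matching values $(t_i\hat R)^{-\beta_i}$, so the bounded variation condition holds across the overlap up to a $\boldsymbol{t}$-independent factor. The neck/outer transition is analogous.

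The main obstacle is the bookkeeping for the weight function when $\hat{\boldsymbol{\beta}}$ is non-constant: naively, the $t$-dependence of $w_{\boldsymbol{t}}$ on $\hat L\setminus\hat S^*$ would be position-dependent and ruin the bounded-variation constant uniformly in $\boldsymbol{t}$. The corrective factor $t_i^{(\hat\beta_i-\hat{\boldsymbol{\beta}})/\hat{\boldsymbol{\beta}}}$ in Definition \ref{def:tconnectsum} is precisely engineered (cf.\ Remark \ref{rem:corrective_term}) so that after simplification the $x$-dependence collapses to the original $\hat\rho^{-\hat{\boldsymbol{\beta}}}$ multiplied by an overall constant, which is invisible to Theorem \ref{thm:weighted_ok}. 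Once this algebraic reduction is made explicit, the remaining verifications are a matter of unwinding definitions, and the theorem follows.
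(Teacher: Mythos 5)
Your proposal is correct and follows essentially the same route as the paper: verify the hypotheses of Theorem \ref{thm:weighted_ok} piece by piece (outer, neck, rescaled inner region), using the scale-invariance of those hypotheses via Corollary \ref{cor:rescaledconifold} and the simplification $w_{\boldsymbol{t}}=t_i^{-\hat\beta_i}\hat\rho^{-\hat{\boldsymbol{\beta}}}$ for the weight. One small caution: the injectivity-radius and Ricci assumptions A1--A2 do not literally ``transfer'' to $g_{\boldsymbol{t}}$ under (scaled-)equivalence of metrics; the paper instead verifies A1--A3 for the exactly conical/rescaled model metric and then invokes the second paragraph of Theorem \ref{thm:weighted_ok}, which delivers the embeddings for any metric uniformly $C^0$-equivalent to one satisfying the assumptions.
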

\begin{proof}
The proof is similar to that of Corollary \ref{cor:embedding}. Let us for the
moment pretend that the metrics $g$, $\hat{g}$ are exactly conical on all ends
of $L$, $\hat{L}$. This allows us to assume that the metrics
$g_{\boldsymbol{t}}$ are exactly
conical on
all ends and neck regions of $L_{\boldsymbol{t}}$ so the assumptions of
Theorem \ref{thm:weighted_ok} are satisfied in these regions. On
$\hat{L}\setminus\hat{S}^*$ we are using rescaled metrics, radius
functions and weights as in Corollary \ref{cor:rescaledconifold}. As seen in
Remark \ref{rem:tweighted_ok}, the assumptions of Theorem \ref{thm:weighted_ok}
are $\boldsymbol{t}$-independent so they are verified here.
These assumptions are also
verified on $L\setminus S^*$ and on the neck regions. We conclude that all forms
of the weighted Sobolev
Embedding Theorems hold for these metrics, with
$\boldsymbol{t}$-independent Sobolev constants.

Let us now go back to the metric $g_{\boldsymbol{t}}$. Recall from Lemma
\ref{l:rescaledconifold} that we can assume that, on each end of
$L_{\boldsymbol{t}}$, $g_{\boldsymbol{t}}$ is a $\boldsymbol{t}$-uniformly
small perturbation of the conical metric. The same is true also on the neck
regions. Specifically, on $\Sigma_i\times[t_i\hat{R},t_i^\tau]$ Lemma
\ref{l:rescaledconifold} shows
that 
\begin{equation*}
 \sup |\phi_{t,i}^*(t_i^2\hat{g})-\tg_i|\leq C_0\hat{R}^{\hat{\nu}_i}.
\end{equation*}
On $\Sigma_i\times[t_i^\tau,2t_i^\tau]$ our hypotheses imply 
\begin{equation*}
\sup |g_{\boldsymbol{t}}-\tg_i|_{
r^ { -2 } \tg_i\otimes\tg_i} \leq C_0.
\end{equation*}
The analogue is true also on $\Sigma_i\times[2t_i^\tau,\epsilon]$, using the
estimates provided by Definition \ref{def:metrics_ends}. 

These perturbations are all $\boldsymbol{t}$-independent so according to
Theorem \ref{thm:weighted_ok} the weighted Sobolev Embedding Theorems 
hold also for $g_{\boldsymbol{t}}$, with $\boldsymbol{t}$-independent Sobolev
constants.
\end{proof}

\begin{remark} Notice that Theorem
\ref{thm:normstequivalent} actually requires only $\boldsymbol{t}$-uniform
$C^0$-bounds
over the metrics $g_{\boldsymbol{t}}$. In Definition \ref{def:tconnectsum} we
include control
over the higher derivatives and the assumption that the quantities in
question tend to zero for use in later sections. The same is also true for
various other results, \textit{e.g.} Corollary \ref{cor:embedding}.
\end{remark}

We conclude with the following result which serves to highlight certain 
properties
of $g_{\boldsymbol{t}}$ as $\boldsymbol{t}\rightarrow 0$. This is
important for Section \ref{s:sums_laplace}.

\begin{lemma}\label{l:neckestimate}
Consider $g_{\boldsymbol{t}}$ as in Definition \ref{def:tconnectsum}. Choose
a neck region in $L_{\boldsymbol{t}}$ and $b\in (0,\tau)$
so that $t_i\hat{R}<t_i^\tau<2t_i^\tau<t_i^b<\epsilon$.
 Then, on $\Sigma_i\times [t_i\hat{R},t_i^b]$, the metric
$g_{\boldsymbol{t}}$ converges to the rescaled metric $t_i^2\hat{\phi}_{t,i}^*\hat{g}$ in the
following sense: for all $j\geq 0$ and as
$t\rightarrow 0$,
\begin{equation*}
 \sup|r^j\hat{\nabla}^j(g_{\boldsymbol{t}}-t_i^2\hat{\phi}_{t,i}^*\hat{g})|_{
t_i^2\hat{\phi}_{t,i}^*\hat{g}\otimes t_i^2\hat{\phi}_{t,i}^*\hat{g}
}\rightarrow 0,
\end{equation*}
where $\hat{\nabla}$ denotes the Levi-Civita connection defined by
$\hat{\phi}_{t_i,i}^*\hat{g}$ on $\Sigma_i\times[t_i\hat{R},t_i^b]$.
\end{lemma}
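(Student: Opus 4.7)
The plan is to split the neck $\Sigma_i\times[t_i\hat R,t_i^b]$ into the three subregions appearing in Definition \ref{def:tconnectsum}, then use the uniform scaled-equivalence of $t_i^2\hat\phi_{t_i,i}^*\hat g$ and the exact conical metric $\tg_i$ supplied by Lemma \ref{l:rescaledconifold} to translate the statement into estimates taken with respect to $\tg_i$ and its Levi-Civita connection $\tnabla$, where everything can be read off directly from the definitions.

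First I would use Lemma \ref{l:rescaledconifold} to note that $t_i^2\hat\phi_{t_i,i}^*\hat g$ is scaled-equivalent to $\tg_i$ on $\Sigma_i\times[t_i\hat R,\infty)$ in the sense of Definition \ref{def:equivalentscaledmetrics}, with constants independent of $\boldsymbol{t}$. Combining Remark \ref{rem:scaledequivalence} and the argument of Lemma \ref{l:equivalentscalednorms}, this implies that the quantity whose supremum we want to bound is dominated, up to a $\boldsymbol{t}$-independent multiplicative constant, by
\begin{equation*}
\sum_{k=0}^{j}\sup_{\Sigma_i\times[t_i\hat R,t_i^b]}\bigl|r^k\tnabla^k(g_{\boldsymbol{t}}-t_i^2\hat\phi_{t_i,i}^*\hat g)\bigr|_{r^{-2}\tg_i\otimes\tg_i},
\end{equation*}
with the connection-difference corrections absorbed into the constant because $A:=\hat\nabla-\tnabla$ and its iterated $\tnabla$-derivatives have uniformly bounded scaled norms. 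It therefore suffices to show that each of these $\tnabla$-terms tends to zero as $\boldsymbol{t}\to 0$ in each of the three pieces of the neck.

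On $\Sigma_i\times[t_i\hat R,t_i^\tau]$ the difference $g_{\boldsymbol{t}}-t_i^2\hat\phi_{t_i,i}^*\hat g$ vanishes identically by construction. On the interpolation piece $\Sigma_i\times[t_i^\tau,2t_i^\tau]$, the explicit hypothesis in Definition \ref{def:tconnectsum} gives $\sup|\tnabla^k(g_{\boldsymbol{t}}-\tg_i)|_{r^{-2}\tg_i\otimes\tg_i}\to 0$, while Lemma \ref{l:rescaledconifold} (applied to the AC end, where $\hat\nu_i<0$) yields
\begin{equation*}
\sup\bigl|\tnabla^k(t_i^2\hat\phi_{t_i,i}^*\hat g-\tg_i)\bigr|_{r^{-2}\tg_i\otimes\tg_i}\leq C_k(r/t_i)^{\hat\nu_i}\leq C_k t_i^{(\tau-1)\hat\nu_i}\to 0,
\end{equation*}
since $(\tau-1)\hat\nu_i>0$, and the triangle inequality finishes this piece. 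On $\Sigma_i\times[2t_i^\tau,t_i^b]$ we have $g_{\boldsymbol{t}}=\phi_i^*g$, and Definition \ref{def:metrics_ends} (the CS end of $L$, so $\nu_i>0$) gives $\sup|\tnabla^k(\phi_i^*g-\tg_i)|_{r^{-2}\tg_i\otimes\tg_i}=O(r^{\nu_i})\leq Ct_i^{b\nu_i}\to 0$, while the other term is controlled exactly as in the interpolation piece; the triangle inequality again concludes.

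The main obstacle is the bookkeeping in the opening reduction: the statement is phrased with the metric $t_i^2\hat\phi_{t_i,i}^*\hat g$ and its connection $\hat\nabla$, but the natural estimates (from Definition \ref{def:metrics_ends}, Definition \ref{def:tconnectsum}, and Lemma \ref{l:rescaledconifold}) live on the cone $(C_i,\tg_i)$. Passing between the two introduces both a change of fibrewise norm on higher tensor bundles and a change of connection, producing commutator terms of the form $\tnabla^{k_1}A\otimes\cdots\otimes\tnabla^{k_p}A\otimes\tnabla^{k_0}(g_{\boldsymbol{t}}-t_i^2\hat\phi_{t_i,i}^*\hat g)$. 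Verifying that all such corrections are benign reduces to the fact that $A$ and its scaled derivatives are uniformly bounded in $\boldsymbol{t}$ — which is precisely what Lemma \ref{l:rescaledconifold} and Remark \ref{rem:scaledequivalence} deliver — so the convergence established above survives intact.
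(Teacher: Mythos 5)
Your proof is correct and follows essentially the same route as the paper: the same three-region decomposition of the neck, the vanishing of the difference on the inner piece, and the triangle inequality combined with the estimates $C_k(r/t_i)^{\hat\nu_i}\to 0$ and $C_k r^{\nu_i}\leq C_k t_i^{b\nu_i}\to 0$ on the middle and outer pieces. The only difference is cosmetic: the paper first pulls everything back by the dilation $\delta_{t_i}$ to the domain $\Sigma_i\times[\hat R,t_i^{b-1}]$ before estimating, whereas you work directly in the neck coordinates and make explicit the (uniform) change of connection and fibrewise norm from $(\hat\nabla, t_i^2\hat\phi_{t_i,i}^*\hat g)$ to $(\tnabla,\tg_i)$, a step the paper leaves as ``a simple computation.''
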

\begin{proof}
Consider the map 
\begin{equation*}
 \delta_{t_i}:\Sigma_i\times[\hat{R},t_i^{b-1}]\rightarrow \Sigma_i\times
[t_i\hat{R},t_i^b], \ \ (\theta,r)\mapsto(\theta,t_ir).
\end{equation*}
We can use this map to pull the estimate back to $\Sigma_i\times[\hat{R},t_i^{b-1}]$. We can then write it as follows: for all $j\geq 0$ and as
$t\rightarrow 0$,
\begin{equation}\label{eq:neckestimate}
\sup
|\hat{\nabla}^j(\delta_{t_i}^*(t_i^{-2}g_{\boldsymbol{t}})-\hat{\phi}_i^*\hat{g}
)|_ { r^ { -2 }
\hat{\phi}_i^*\hat{g}\otimes\hat{\phi}_i^*\hat{g} } \rightarrow 0,
\end{equation}
where $\hat{\nabla}$ denotes the Levi-Civita connection defined by
$\hat{\phi}_i^*\hat{g}$ on $\Sigma_i\times[\hat{R},t^{b-1}]$.

We choose to prove this form of the estimate. 

On $\Sigma_i\times [\hat{R},t_i^{\tau-1}]$ it
follows from Definition
\ref{def:tconnectsum} that
$\delta_{t_i}^*(t_i^{-2}g_{\boldsymbol{t}})=\hat{\phi}_i^*\hat{g}$
so the equation is trivially true.

On $\Sigma_i\times[t_i^{\tau-1},2t_i^{\tau-1}]$, 
\begin{align*}
|\tnabla^j(\delta_{t_i}^*(t_i^{-2}g_{\boldsymbol{t}})-\tg_i)|_{r^{-2}
\tg_i\otimes\tg_i }
&=|\tnabla^j(\delta_{t_i}^*(t_i^{-2}g_{\boldsymbol{t}})-\delta_{t_i}^*(t_i^{-2}
\tg_i))|_
{\delta_{t_i}^*(r/{t_i})^{-2}\delta_{t_i}^*({t_i}^{-2}\tg_i)\otimes\delta_{
t_i}^*(t_i^ { -2 } \tg_i) } \\
&=\delta_{t_i}^*
\left(|\tnabla^j(t_i^{-2}g_{\boldsymbol{t}}-t_i^{-2}\tg_i)|_{(r/{t_i})^{-2}t_i^{
-2 } \tg_i\otimes
t_i^{-2}\tg_i}\right)\\
&=\delta_{t_i}^*\left(|\tnabla^j(g_{\boldsymbol{t}}-\tg_i)|_{r^{-2}
\tg_i\otimes\tg_i }
\right)\rightarrow 0,
\end{align*}
where the last statement follows from Definition \ref{def:tconnectsum}.
Furthermore, it follows from Definition \ref{def:metrics_ends} that
\begin{equation*}
 |\tnabla^j(\hat{\phi}_i^*\hat{g}-\tg_i)|_{r^{-2}\tg_i\otimes\tg_i}\leq
C_j{t_i}^{(\tau-1)\hat{\nu}_i}\rightarrow 0,
\end{equation*}
using $(\tau-1)\hat{\nu}_i>0$. We have thus found that both metrics of interest
converge to the same metric $\tg_i$. The conclusion is a simple computation.

On $\Sigma_i\times[2t_i^{\tau-1},t_i^{b-1}]$, as above and using
$g_{\boldsymbol{t}}=\phi_i^*g$,
\begin{equation*}
|\tnabla^j(\delta_{t_i}^*(t_i^{-2}g_{\boldsymbol{t}})-\tg_i)|_{r^{-2}
\tg_i\otimes\tg_i }
=\delta_{t_i}^*\left(|\tnabla^j(\phi_i^*g-\tg_i)|_{r^{-2}\tg_i\otimes\tg_i}
\right)\leq
C_jt_i^{b\nu_i}\rightarrow 0 ,
\end{equation*}
using $b\nu_i>0$. Furthermore, 
\begin{equation*}
 |\tnabla^j(\hat{\phi}_i^*\hat{g}-\tg_i)|_{r^{-2}\tg_i\otimes\tg_i}\leq
C_j(2t_i^{\tau-1})^{\hat{\nu}_i}\rightarrow 0.
\end{equation*}
Again, combining these estimates implies the claim.
\end{proof}


\section{The Laplacian on conifold connect sums}\label{s:sums_laplace}

Let $(L,g,\rho, S^*)$,
$(\hat{L},\hat{g},\hat{\rho},\hat{S}^*)$ be compatible marked
conifolds. As seen in Section \ref{s:sums_sobolev}, we can define their connect sum
$(\hat{L}\#L,\hat{g}\#g,\hat{\rho}\#\rho)$. This is a new conifold so we can
study the properties of its Laplace operator as in Section
\ref{s:accs_harmonic}. 

We start with the case in which $\hat{S}^{**}\cup S^{**}\neq\emptyset$, \textit{i.e.} the set of ends is non-empty. This case actually turns out to be easier than the alternative situation, where $\hat{L}\#L$ is smooth and compact, because we can use weights to force injectivity of the Laplacian.
\subsubsection*{Non-compact conifolds} Assume the set $\hat{S}^{**}\cup S^{**}$ of ends of $\hat{L}\#L$ is non-empty. If weights $\boldsymbol{\beta}$,
$\hat{\boldsymbol{\beta}}$ are non-exceptional for $\Delta_g$,
$\Delta_{\hat{g}}$ then the weight
$\hat{\boldsymbol{\beta}}\#\boldsymbol{\beta}$ is non-exceptional for
$\Delta_{\hat{g}\#g}$ so 
$$\Delta_{\hat{g}\#g}:W^p_{k,\hat{\boldsymbol{\beta}}\#\boldsymbol{\beta}}
\rightarrow W^p_{k-2,\hat{\boldsymbol{\beta}}\#\boldsymbol{\beta}-2} $$ 
is Fredholm. The same holds
for the parametric connect sums
$(L_{\boldsymbol{t}},g_{\boldsymbol{t}},\rho_{\boldsymbol{t}},\boldsymbol{\beta}
_{\boldsymbol{t}})$. 

We want to study the invertibility of the Laplace operator. The following result is obvious.

\begin{lemma}\label{l:sum_injective}
Let $(\hat{L},\hat{g},\hat{\rho},\hat{\boldsymbol{\beta}},\hat{S}^*)$ be a weighted
AC-marked conifold. Assume $\hat{\boldsymbol{\beta}}$ satisfies the
conditions
\begin{equation*}
\left\{
\begin{array}{l}
\hat{\beta}_i<0\mbox{ for all AC ends
}\hat{S}_i\in \hat{S}\\
\hat{\beta}_i>2-m\mbox{
for all CS ends }\hat{S}_i\in \hat{S}
\end{array}
\right.
\end{equation*}
so that $\Delta_{\hat{g}}$ is injective.

Let $(L,g,\rho,\boldsymbol{\beta}, S^*)$ be a weighted CS-marked conifold. Assume
$\boldsymbol{\beta}$ satisfies the conditions
\begin{equation*}
\left\{
\begin{array}{l}
\beta_i<0\mbox{ for all AC
ends }S_i\in S\\
\beta_i>2-m\mbox{ for all CS ends
}S_i\in S.
\end{array}
\right.
\end{equation*}
This is not yet sufficient to conclude that $\Delta_g$ is
injective because the set of AC ends might be empty. To obtain injectivity we
must furthermore assume that each connected component of $L$ has at least one end, \textit{e.g.}
$S'$, satisfying the condition
\begin{equation*}
\left\{
\begin{array}{l}
\beta'<0 \mbox{ if $S'$ is AC}\\
\beta'>0 \mbox{ if $S'$ is CS}.
\end{array}
\right.
\end{equation*}

Now assume that $L$, $\hat{L}$ are compatible. Then, for all ends $S_i\in S^*$,
$2-m<\beta_i<0$. This implies that $S'\in S^{**}$ so
$\hat{L}\#L$ has at least one end. Furthermore,
$\hat{\boldsymbol{\beta}}\#\boldsymbol{\beta}$ satisfies the conditions
\begin{equation*}
\left\{
\begin{array}{l}
\hat{\boldsymbol{\beta}}\#\boldsymbol{\beta}_{|S_i}<0\mbox{ for all AC ends
}S_i\in \hat{S}^{**}\cup S^{**}\\
\hat{\boldsymbol{\beta}}\#\boldsymbol{\beta}_{|S_i}>2-m\mbox{
for all CS ends }S_i\in \hat{S}^{**}\cup S^{**}.
\end{array}
\right.
\end{equation*}
Together with the condition on $S'$, this implies that
$\Delta_{\hat{g}\#g}$ is injective. 

If furthermore $\boldsymbol{\beta}$, $\hat{\boldsymbol{\beta}}$ are
non-exceptional for $\Delta_g$,
$\Delta_{\hat{g}}$ then 
$$\Delta_{\hat{g}\#g}:W^p_{k,\hat{\boldsymbol{\beta}}\#\boldsymbol{\beta}}
\rightarrow W^p_{k-2,\hat{\boldsymbol{\beta}}\#\boldsymbol{\beta}-2} $$
is a topological isomorphism onto its image so there exists $C>0$ such that, for
all $f\in W^p_{k,\hat{\boldsymbol{\beta}}\#\boldsymbol{\beta}}$,
$$\|f\|_{W^p_{k,\hat{\boldsymbol{\beta}}\#\boldsymbol{\beta}}}\leq C\|\Delta
f\|_{W^p_{k-2,\hat{\boldsymbol{\beta}}\#\boldsymbol{\beta}-2}}.$$
\end{lemma}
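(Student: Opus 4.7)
The lemma's conclusion has essentially three parts: (a) the weight-bookkeeping claim that $S' \in S^{**}$ and that $\hat{\boldsymbol{\beta}}\#\boldsymbol{\beta}$ satisfies the displayed sign conditions on every end of $\hat{L}\#L$; (b) injectivity of $\Delta_{\hat{g}\#g}$; (c) the uniform estimate under the non-exceptional assumption. My plan is to dispatch (a) as a direct unwinding of the definitions, reduce (b) to the standard arguments of Section \ref{s:accs_harmonic}, and obtain (c) formally from (b) using the Fredholm theory and the Open Mapping Theorem.

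For (a) the key observation is that compatibility forces $\hat{\boldsymbol{\beta}}_{|\hat{S}^*} = \boldsymbol{\beta}_{|S^*}$, while the sign conditions on $\hat{\boldsymbol{\beta}}$ applied to the AC-marking give $\hat{\beta}_i < 0$, and those on $\boldsymbol{\beta}$ applied to the CS-marking give $\beta_i > 2-m$. Hence marked weights satisfy $2-m < \beta_i < 0$. The special end $S'$ requires either $\beta_1 < 0$ with $S'$ AC or $\beta_1 > 0$ with $S'$ CS; in the first case $S' \notin S^*$ because $S^*$ contains only CS ends, and in the second case $\beta_1 > 0$ is incompatible with $\beta_i \in (2-m,0)$, so again $S' \in S^{**}$. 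The description of $\hat{\boldsymbol{\beta}}\#\boldsymbol{\beta}$ in Definition \ref{def:connectsum} then shows it agrees with the original weights on each unmarked end, transferring the sign conditions to $\hat{L}\#L$.

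For (b), let $f \in W^p_{k,\hat{\boldsymbol{\beta}}\#\boldsymbol{\beta}}$ satisfy $\Delta_{\hat{g}\#g} f = 0$. Elliptic regularity shows $f$ is smooth. By the embedding of weighted Sobolev into weighted $C^0$ spaces (Corollary \ref{cor:embedding} together with Remark \ref{rem:embedding}), after a small perturbation of the weight which preserves the sign conditions, $f$ lies in $C^0_{\hat{\boldsymbol{\beta}}\#\boldsymbol{\beta}'}$. On each unmarked AC end $f$ decays at infinity; on each unmarked CS end $f$ grows strictly slower than $r^{2-m}$, which is the borderline for the Green's function, so the singularity is removable in the integration-by-parts sense. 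Now fix a connected component of $\hat{L}\#L$: by part (a) it contains one of the distinguished ends $S'$ which enforces genuine vanishing (either $f \to 0$ at infinity if $S'$ is AC, or $f = o(1)$ as $r \to 0$ if $S'$ is CS with $\beta_1 > 0$). Exactly as in Section \ref{s:accs_harmonic} (the CS, AC and CS/AC cases there), the combination of the maximum principle with the integration-by-parts identity $\int (\Delta f) f \, \mathrm{vol} = \int |df|^2 \, \mathrm{vol}$ (legitimate because the weights place $f$ on the $L^2$ side of the borderline) forces $f$ to be locally constant; the $S'$ end then eliminates nonzero constants. I expect the only mildly delicate point here to be justifying that the boundary terms from integration by parts vanish in the limit: this is handled exactly as in Section \ref{s:accs_harmonic} by cutting off with a smooth function supported away from the singularities and passing to the limit using the weight conditions.

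For (c), once $\boldsymbol{\beta}$ and $\hat{\boldsymbol{\beta}}$ are non-exceptional, the same holds for $\hat{\boldsymbol{\beta}}\#\boldsymbol{\beta}$ on $\hat{L}\#L$: the exceptional set of Corollary \ref{cor:laplaceresults} is determined solely by the spectra of the link Laplacians on the ends, and the ends of $\hat{L}\#L$ are precisely the unmarked ends of $\hat{L}$ and $L$, with the same links and the same weights. Thus $\Delta_{\hat{g}\#g}$ is Fredholm; being also injective by (b), it is a topological isomorphism onto its closed image, and Equation \ref{eq:normofinverse} applied to the inverse gives the stated bound with $C = \|(\Delta_{\hat{g}\#g})^{-1}\|$.
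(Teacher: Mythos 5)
The paper offers no proof of this lemma (it is introduced with ``The following result is obvious''), and your three-part assembly is exactly the intended one: the weight bookkeeping follows from Definitions \ref{def:marking}, \ref{def:compatible} and \ref{def:connectsum}; injectivity is imported from Section \ref{s:accs_harmonic}; and the estimate follows from Fredholmness (Corollary \ref{cor:laplaceresults}, using that the ends and links of $\hat{L}\#L$ are the unmarked ends of $L$ and $\hat L$ with their original weights) together with the Open Mapping Theorem and Equation \ref{eq:normofinverse}. Parts (a) and (c) of your argument are correct as written.

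The one genuine flaw is in your justification of injectivity in (b). You claim the identity $\langle \Delta_{\hat g\# g} f, f\rangle=\int |df|^2\,\mbox{vol}$ is ``legitimate because the weights place $f$ on the $L^2$ side of the borderline.'' That is false on part of the allowed weight range: on a CS end the hypothesis is only $\beta_i>2-m$, whereas $f=O(r^{\beta_i})$ gives $\int |df|^2\,\mbox{vol}\sim\int_0 r^{2\beta_i-2}r^{m-1}\,dr$, which converges only for $\beta_i>\tfrac{2-m}{2}$; equivalently, Lemma \ref{l:intbyparts} with $u=v=f$ requires $f\in W^{2}_{2-k,2-\boldsymbol{\beta}-m}$, i.e.\ $\beta_i\geq\tfrac{2-m}{2}$ on CS ends (and $\leq\tfrac{2-m}{2}$ on AC ends). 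So for weights with some $\beta_i\in(2-m,\tfrac{2-m}{2})$ the integration by parts you invoke does not apply, and the maximum principle alone does not exclude functions blowing up like $r^{\beta_i}$ at a conical singularity. The conclusion is still correct, but the missing step is the one the paper itself uses in Section \ref{s:accs_harmonic}: first obtain injectivity for weights with all CS components in $[\tfrac{2-m}{2},0)$ (where integration by parts is legitimate, and where the distinguished end $S'$ kills constants), then observe from Equation \ref{eq:exceptionalforlaplacian} that the interval $(2-m,0)$ contains no exceptional weights, so by the conifold analogue of Proposition \ref{prop:acylindep} the kernel is unchanged as the CS weights are lowered towards $2-m$ (alternatively, one can argue via surjectivity in the dual range and the change-of-index formula, as in the discussion of Figure \ref{fig:2}). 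With that step inserted, your proof is complete and coincides with the argument the paper intends.
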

For the constant $C$ in Lemma \ref{l:sum_injective} one can choose the norm of
the inverse map $(\Delta_{\hat{g}\#g})^{-1}$,
as in Equation \ref{eq:normofinverse}. The analogous result holds also for
parametric connect sums. We now want to show that, in this case, the
invertibility constant $C$ can be chosen to be $\boldsymbol{t}$-independent. In
other words,
there exists a $\boldsymbol{t}$-uniform upper bound on the norms of the inverse maps
$(\Delta_{g_{\bt}})^{-1}$.

\begin{theorem}\label{thm:sum_injective}
Let $(L,g,\rho,\boldsymbol{\beta}, S^*)$,
$(\hat{L},\hat{g},\hat{\rho},\hat{\boldsymbol{\beta}},\hat{S}^*)$ be marked
compatible conifolds satisfying all the conditions of Lemma
\ref{l:sum_injective}. Define $(L_{\boldsymbol{t}},g_{\boldsymbol{t}},\rho_{\boldsymbol{t}},\boldsymbol{\beta}_{\boldsymbol{t}})$ as in Definition
\ref{def:tconnectsum}. Then there exists $C>0$ such that, for all $f\in
W^p_{k,\boldsymbol{\beta}_t}(L_{\boldsymbol{t}})$, 
$$\|f\|_{W^p_{k,\boldsymbol{\beta}_{\boldsymbol{t}}}}\leq
C\|\Delta_{g_{\boldsymbol{t}}}
f\|_{W^p_{k-2,\boldsymbol{\beta}_{\boldsymbol{t}}-2}}.$$
\end{theorem}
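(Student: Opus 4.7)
The plan is to argue by contradiction using a blow-up scheme. Suppose no uniform $C$ works; then there exist $\boldsymbol{t}_n\to 0$ and $f_n\in W^p_{k,\boldsymbol{\beta}_{\boldsymbol{t}_n}}(L_{\boldsymbol{t}_n})$ with $\|f_n\|_{W^p_{k,\boldsymbol{\beta}_{\boldsymbol{t}_n}}}=1$ but $\|\Delta_{g_{\boldsymbol{t}_n}}f_n\|_{W^p_{k-2,\boldsymbol{\beta}_{\boldsymbol{t}_n}-2}}\to 0$. I will localize $f_n$ onto a \emph{large} piece modelled on $L$ and, for each marked end $i$, a \emph{small} piece modelled on the $i$-th component of $\hat{L}$ (at its natural scale after rescaling by $t_{n,i}^{-1}$), extract weak limits satisfying the Laplace equation on the fixed conifolds $L$ and $\hat{L}$, and invoke Lemma \ref{l:sum_injective} to force these limits to vanish. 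A Rellich compactness argument will then contradict $\|f_n\|=1$. Concretely, fix an intermediate scale $r_{n,i}:=t_{n,i}^{1/2}$ (so $t_{n,i}\hat R\ll r_{n,i}\ll \epsilon$) and build a partition of unity $1=\chi_n+\sum_i\hat\chi_{n,i}$ on $L_{\boldsymbol{t}_n}$ with $\chi_n=1$ where $\rho_{\boldsymbol{t}_n}\ge 2r_{n,i}$ and $\hat\chi_{n,i}=1$ where $\rho_{\boldsymbol{t}_n}\le r_{n,i}/2$ on the $i$-th neck, chosen so that the scaled derivatives $\rho_{\boldsymbol{t}_n}^{j}\nabla^{j}\chi_n$ are uniformly bounded; the commutators $[\Delta_{g_{\boldsymbol{t}_n}},\chi_n]$, $[\Delta_{g_{\boldsymbol{t}_n}},\hat\chi_{n,i}]$ are then first-order operators supported in the thin transition annuli $A_{n,i}:=\{r_{n,i}/2\le\rho_{\boldsymbol{t}_n}\le 2r_{n,i}\}$.

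The large piece $F_n:=\chi_n f_n$ can be regarded as a compactly supported section on $L$, since on $\mathrm{supp}(\chi_n)$ the metric, scale and weight of $L_{\boldsymbol{t}_n}$ coincide with $(g,\rho,\boldsymbol{\beta})$. By Theorem \ref{thm:normstequivalent}, $\|F_n\|_{W^p_{k,\boldsymbol{\beta}}(L)}\le C$, and
\[
\Delta_g F_n=\chi_n\,\Delta_{g_{\boldsymbol{t}_n}}f_n+[\Delta_{g_{\boldsymbol{t}_n}},\chi_n]f_n\longrightarrow 0 \ \text{in } W^p_{k-2,\boldsymbol{\beta}-2}(L),
\]
where the first term vanishes by hypothesis and the commutator piece is controlled by $\|f_n\|$ restricted to $A_{n,i}$, which tends to $0$ thanks to the strict weight inequalities $2-m<\beta_i<0$ of Lemma \ref{l:sum_injective} (these force the weighted mass in the thin annulus between two scales to be a vanishing fraction of the total as $\boldsymbol{t}_n\to 0$). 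A weakly convergent subsequence $F_n\rightharpoonup f_\infty$ in $W^p_{k,\boldsymbol{\beta}}(L)$ yields $\Delta_g f_\infty=0$, so $f_\infty=0$ by Lemma \ref{l:sum_injective}. For each neck, the analogous argument applies to the small piece $\hat F_n^i$ obtained by pulling $\hat\chi_{n,i}f_n$ back to $\hat L$ via $\hat\phi_{t_{n,i},i}$ and rescaling the ambient metric by $t_{n,i}^{-2}$: Lemma \ref{l:neckestimate} gives smooth convergence of the rescaled metric to $\hat g$, Corollary \ref{cor:rescaledconifold}(2) together with the correction factor $t_i^{\hat{\boldsymbol{\beta}}-\hat\beta_i}$ built into $w_{\boldsymbol{t}}$ makes $\|\hat F_n^i\|_{W^p_{k,\hat{\boldsymbol{\beta}}}(\hat L)}$ uniformly bounded, and the weak limit $\hat f_\infty^i$ satisfies $\Delta_{\hat g}\hat f_\infty^i=0$, hence vanishes by Lemma \ref{l:sum_injective}.

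The main obstacle is upgrading these qualitative vanishings to a quantitative contradiction with $\|f_n\|=1$. The tool is weighted Rellich compactness: the embedding $W^p_{k,\boldsymbol{\beta}}\hookrightarrow W^p_{k-1,\boldsymbol{\beta}'}$ is compact on each conifold when $\boldsymbol{\beta}'$ is slightly larger on AC ends and slightly smaller on CS ends, as follows from the uniform Sobolev embeddings of Corollary \ref{cor:embedding} combined with standard local Rellich and the decay of the tail on the ends. Coupled with interior elliptic regularity for $\Delta$, this upgrades weak to strong convergence in the original weighted norms, giving $\|F_n\|_{W^p_{k,\boldsymbol{\beta}}}\to 0$ and $\|\hat F_n^i\|_{W^p_{k,\hat{\boldsymbol{\beta}}}}\to 0$, so that
\[
\|f_n\|_{W^p_{k,\boldsymbol{\beta}_{\boldsymbol{t}_n}}}\le \|\chi_n f_n\|+\sum_i\|\hat\chi_{n,i}f_n\|\longrightarrow 0,
\]
contradicting $\|f_n\|=1$. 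The most delicate bookkeeping throughout is tracking the rescaling factor $t_i^{\hat{\boldsymbol{\beta}}-\hat\beta_i}$ in $w_{\boldsymbol{t}}$, which is precisely what makes the small-piece norms uniformly comparable to the fixed $\hat L$-norms; carrying this factor consistently through the three regions---large, small, and transition---is where the argument demands the most care.
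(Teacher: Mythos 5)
Your overall strategy (contradiction plus blow-up onto the two model conifolds) is a legitimate alternative to the paper's direct argument, but as written it has a genuine gap precisely at the neck, which is the heart of the theorem. The claim that the weighted mass of $f_n$ in the transition annulus $A_{n,i}=\{r_{n,i}/2\le\rho_{\boldsymbol{t}_n}\le 2r_{n,i}\}$ tends to zero ``thanks to the strict weight inequalities $2-m<\beta_i<0$'' is false: in the cylindrical picture the annulus is a segment of fixed length $\log 4$, and the weighted norm is scale-invariant on the cone, so a sequence such as $f_n=r^{\beta}\psi(r/r_{n,i})$ with $\psi$ a fixed bump supported in $[1/2,2]$ has \emph{all} of its (bounded, non-vanishing) weighted mass in $A_{n,i}$. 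Since your cutoffs have fixed-ratio supports, their scaled derivatives are only $O(1)$, so the commutators $[\Delta_{g_{\boldsymbol{t}_n}},\chi_n]f_n$ are comparable to $\|f_n\|$ on $A_{n,i}$ and do not tend to zero without further input. For the same reason, even if both weak limits on $L$ and $\hat L$ vanish, this does not contradict $\|f_n\|=1$: the mass can concentrate at an intermediate scale $s_n$ with $t_{n,i}\ll s_n\ll 1$, which is invisible to both of your rescalings. To close the argument along your lines you would need a third blow-up regime: rescale by $s_n^{-1}$ around the concentration scale, obtain a nonzero harmonic limit on the \emph{exact cone} $\Sigma_i\times(0,\infty)$ lying in $W^p_{k,\beta}$ with $\beta\in(2-m,0)$, and rule it out using the fact that the exceptional rates of the cone Laplacian (Equation \ref{eq:exceptionalforlaplacian}) avoid the interval $(2-m,0)$. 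Without this third model the proof does not go through.

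It is worth contrasting this with the paper's proof, which avoids the concentration issue entirely by a different choice of cutoff: $\eta_t(r)=\eta(\log r/\log t)$ interpolates between the scales $t^a$ and $t^b$ over a logarithmically long region, so its scaled derivatives are $O(1/|\log t|)$. The commutator terms are then \emph{small} multiples of $\|f\|_{W^p_{k,\boldsymbol{\beta}_{\boldsymbol{t}}}}$ and can simply be absorbed into the left-hand side, yielding an explicit constant of roughly $C_g+C_{\hat g}$ with no compactness argument at all. If you want to keep your blow-up scheme, either adopt the logarithmic cutoff (which makes the commutator estimate trivial) or add the cone regime; as it stands the step ``the commutator piece tends to $0$'' is unjustified and the final contradiction does not follow.
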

\begin{proof}
To simplify the notation let us assume that all $t_i$ coincide: we can then
work with a unique parameter $t$. The general case is analogous.

 Let $C_g$ denote an invertibility constant for $\Delta_g$ on $L$,
\textit{i.e.} for all $f\in W^p_{k,\boldsymbol{\beta}}(L)$,
$$\|f\|_{W^p_{k,\boldsymbol{\beta}}}\leq C_g\|\Delta_g
f\|_{W^p_{k-2,\boldsymbol{\beta}-2}}.$$

Let $C_{\hat{g}}$ denote an analogous constant for $\Delta_{\hat{g}}$ on
$\hat{L}$.

Choose constants $a$, $b$ satisfying $0<b<a<\tau$ and a smooth
decreasing function $\eta:\R\rightarrow [0,1]$ such that $\eta(s)=1$ for
$s\leq b$ and $\eta(s)=0$ for $s\geq a$. Then the function $\eta_t(r):=\eta
(\log r/\log t):(0,\infty)\rightarrow [0,1]$ has the following properties:
\begin{enumerate}
\item $\eta_t$ is smooth increasing, $\eta_t(r)=0$ for $r\leq t^a$,
$\eta_t(r)=1$ for $r\geq t^b$.
\item For all $k\geq 1$ there exists $C_k>0$ such that
$$\left|r^k\frac{\partial^k\eta_t}{(\partial r)^k}(r)\right|\leq
\frac{C_k}{|\log t|}\rightarrow 0\ \ \mbox{ as
$t\rightarrow 0$}.$$
We set $\eta_t'(r):=\frac{\partial\eta_t}{\partial r}(r)$, 
$\eta_t''(r):=\frac{\partial^2\eta_t}{(\partial r)^2}(r)$. 
\end{enumerate}
Using the diffeomorphisms $\hat{\phi}_{t,i}$ and $\phi_i$ we now extend
$\eta_t$ to a smooth function on $L_{\boldsymbol{t}}$ by setting $\eta_t\equiv
0$ on $(\hat{L}\setminus \hat{S}^*)\cup(\Sigma^*\times[t\hat{R},t^a])$ and
$\eta_t\equiv 1$ on $(L\setminus S^*)\cup(\Sigma^*\times[t^b,\epsilon])$.

For any $f\in W^p_{k,\boldsymbol{\beta}_t}$,
$$\|f\|_{W^p_{k,\boldsymbol{\beta}_t}}\leq
\|\eta_tf\|_{W^p_{k,\boldsymbol{\beta}_t}}
+\|(1-\eta_t)f\|_ { W^p_{k,\boldsymbol{\beta}_t}}.$$
Notice that $\eta_tf$ has support in $(\Sigma^*\times
[t^a,\epsilon])\cup(L\setminus S^*)$, where, up to
identifications via the diffeomorphisms $\phi_i$,
$(g_{\boldsymbol{t}},\rho_{\boldsymbol{t}})=(g,\rho)$,
$\boldsymbol{\beta}_{\boldsymbol{t}}=\boldsymbol{\beta}$. Thus 
\begin{align*}
 \|\eta_tf\|_{W^p_{k,\boldsymbol{\beta}_{\boldsymbol{t}}}(g_{\boldsymbol{t}})}
&=\|\eta_tf\|_{W^p_{k,\boldsymbol{\beta}}(g)}\\
&\leq C_g\|\Delta_g(\eta_tf)\|_{W^p_{k-2,\boldsymbol{\beta}-2}(g)}\\
&=C_g\|\Delta_{g_{\boldsymbol{t}}}(\eta_tf)\|_{W^p_{k-2,\boldsymbol{\beta}_{
\boldsymbol{t}}-2} (g_{\boldsymbol{t}})}\\
&\leq
C_g\left(\|\eta_t\Delta_{g_{\boldsymbol{t}}}f\|_{W^p_{k-2,\boldsymbol{\beta}_{
\boldsymbol{t}}-2 }}+
\|\eta_t'\nabla f\|_{W^p_{k-2,\boldsymbol{\beta}_{\boldsymbol{t}}-2}}+
\|\eta_t''f\|_{W^p_{k-2,\boldsymbol{\beta}_{\boldsymbol{t}}-2}}\right),
\end{align*}
where we drop unnecessary constants. 
Applying the Leibniz rule to expressions of the form
$\nabla^j(\eta_t\Delta_{g_{\boldsymbol{t}}}f)$ we find (again up to constants)
\begin{align*}
 \|\eta_t\Delta_{g_{\boldsymbol{t}}}f\|^p_{W^p_{k-2,\boldsymbol{\beta}_{
\boldsymbol{t}}-2 }}&\leq
\sum_{j=0}^{k-2}\sum_{l=0}^{j}\int|\rho^l\nabla^l\eta_t|^p_{g_{\boldsymbol{t}}}
|\rho^{2-\boldsymbol{ \beta_{
\boldsymbol{t}} } +j-l } \nabla^ { j
-l}\Delta_{g_{\boldsymbol{t}}}f|^p_{g_{\boldsymbol{t}}}\rho^{-m}\mbox{vol}_{g_{
\boldsymbol{t}} }\\
&\leq \left(1+\left(\frac{C}{|\log
t|}\right)^p\right)\|\Delta_{g_{\boldsymbol{t}}}f\|^p_{W^p_{k-2,\boldsymbol{
\beta } _ {
\boldsymbol{t}}-2}}.
\end{align*}
We conclude that 
\begin{equation*}
 \|\eta_t\Delta_{g_{\boldsymbol{t}}}f\|_{W^p_{k-2,\boldsymbol{\beta}_{
\boldsymbol{t}}-2 }}\leq \|\Delta_{g_{\boldsymbol{t}}}f\|_{W^p_{k-2,\boldsymbol{\beta}_{
\boldsymbol{t}}-2 }}+\frac{C}{|\log t|}\|f\|_{W^p_{k,\boldsymbol{\beta}_{
\boldsymbol{t}}}}.
\end{equation*}
Analogously,
\begin{align*}
\|\eta_t'\nabla f\|^p_{W^p_{k-2,\boldsymbol{\beta}_{\boldsymbol{t}}-2}}&\leq
\sum_{j=0}^{k-2}\sum_{l=0}^{j}\int|\rho^{1+l}\nabla^l\eta_t'|^p_{g_{\boldsymbol{t}}}
|\rho^{1-\boldsymbol{ \beta_{
\boldsymbol{t}} } +j-l } \nabla^ { j
-l}\nabla f|^p_{g_{\boldsymbol{t}}}\rho^{-m}\mbox{vol}_{g_{
\boldsymbol{t}} }\\
&\leq \left(\frac{C}{|\log
t|}\right)^p
\|f\|^p_{W^p_{k,\boldsymbol{\beta}_{\boldsymbol{t}}}}.
\end{align*}
Similar calculations apply to $\|\eta_t''f\|$, ultimately showing that
\begin{equation*}
\|\eta_t'\nabla f\|_{W^p_{k-2,\boldsymbol{\beta}_{\boldsymbol{t}}-2}}\leq
\frac{C}{|\log
t|}
\|f\|_{W^p_{k,\boldsymbol{\beta}_{\boldsymbol{t}}}},\
\ 
\|\eta_t''f\|_{W^p_{k-2,\boldsymbol{\beta}_{\boldsymbol{t}}-2}}\leq
\frac{C}{|\log
t|}
\|f\|_{W^p_{k,\boldsymbol{\beta}_{\boldsymbol{t}}}}.
\end{equation*}

The function $(1-\eta_t)f$ has support in
$(\hat{L}\setminus\hat{S}^*)\cup(\Sigma^*\times[t\hat{R},t^b])$. On this space
Definition \ref{def:tconnectsum} shows that
$\boldsymbol{\beta}_{\boldsymbol{t}}=\hat{\boldsymbol{\beta}}$. Furthermore,
on the $i$-th component $\Sigma_i\times [t\hat{R},t^b]$ and up to identifications via the diffeomorphisms $\hat{\phi}_{t,i}$, Lemma \ref{l:neckestimate} shows that $g_{\boldsymbol{t}}$ is scaled-equivalent to $t^2\hat{g}$ and $\rho_{\boldsymbol{t}}=t\hat{\rho}$.

Using Corollary \ref{cor:rescaledconifold} we thus find
\begin{align*}
 \|(1-\eta_t)f\|_{W^p_{k,\boldsymbol{\beta}_{\boldsymbol{t}}}(g_{\boldsymbol{t}},\rho_{\boldsymbol
{t}})}
&\simeq
\|(1-\eta_t)f\|_ { W^p_{k,\hat{\boldsymbol{\beta}}}(t^2\hat{g},t\hat{\rho})}\\
&=t^{-\beta_i}\|(1-\eta_t)f\|_ { W^p_{k,\hat{\boldsymbol{\beta}}}(\hat{g},\hat{\rho})}\\
&\leq t^{-\beta_i}C_{\hat{g}}
\|\Delta_{\hat{g}}((1-\eta_t)f)\|_ {
W^p_{k-2,\hat{\boldsymbol{\beta}}-2}(\hat{g},\hat{\rho})}\\
&=t^{2-\beta_i}C_{\hat{g}} \|\Delta_{t^2\hat{g}}((1-\eta_t)f)\|_ {
W^p_{k-2,\hat{\boldsymbol{\beta}}-2}(\hat{g},\hat{
\rho}) }\\
&=C_{\hat{g}}\|\Delta_{t^2\hat{g}}((1-\eta_t)f)\|_ {
W^p_{k-2,\hat{\boldsymbol{\beta}}-2}(t^2\hat{g},t\hat{\rho})}\\
&\simeq C_{\hat{g}}\|\Delta_{g_{\boldsymbol{t}}}((1-\eta_t)f)\|_ {
W^p_{k-2,\boldsymbol{\beta}_{\boldsymbol{t}}-2}(g_{\boldsymbol{t}},\rho_{\boldsymbol{t}})},
\end{align*}
where $\simeq$ replaces multiplicative constants. We now continue as above.
Combining the above results leads to an inequality of the form
$$\|f\|_{W^p_{k,\boldsymbol{\beta}_{\boldsymbol{t}}}}\leq (C_g+C_{\hat{g}})
\left(\|\Delta_{g_{\boldsymbol{t}}}f\|_{W^p_{k-2,\boldsymbol{\beta}_{\boldsymbol
{t}}-2}}+\frac{C}{|\log
t|}
\|f\|_{W^p_{k,\boldsymbol{\beta}_{\boldsymbol{t}}}}\right).$$
For $t$ sufficiently small we can absorb
the second term on the
right hand side into the left hand side, proving the claim.
\end{proof}
\subsubsection*{Smooth compact manifolds} Assume the set $\hat{S}^{**}\cup S^{**}$ is empty, so that $\hat{L}\#L$ is smooth and compact. In this case the Laplace operator, acting on functions, always has kernel: the space of constants $\R$. We can thus achieve injectivity only by restricting ourselves to a subspace transverse to constants. Furthermore, if we want the invertibility constant to be independent of $\boldsymbol{t}$ we must allow the subspace to depend on $\boldsymbol{t}$, as follows.
\begin{theorem}\label{thm:cpt_sum_injective}
Let $(L,g,\rho, S^*)$,
$(\hat{L},\hat{g},\hat{\rho},\hat{S}^*)$ be marked
compatible conifolds such that the parametric connect sums $(L_{\boldsymbol{t}},g_{\boldsymbol{t}},\rho_{\boldsymbol{t}})$ are smooth and compact. Choose constant weights $\boldsymbol{\beta}=\hat{\boldsymbol{\beta}}\in (2-m,0)$ and define $\boldsymbol{\beta}_{\bt}$ as usual.  
\begin{enumerate}
\item Assume $L$ has only one connected component.
Then there exists a constant $C>0$ and, for each $\bt$ sufficiently small, a subspace $E_{\bt}\subset W^p_{k,\bt}(L_{\bt})$ such that 
\begin{equation}\label{eq:decomp_Et}
 W^p_{k,\bt}(L_{\bt})=E_{\bt}\oplus\R
\end{equation}
and, for all $f\in E_{\bt}$,
$$\|f\|_{W^p_{k,\boldsymbol{\beta}_{\boldsymbol{t}}}}\leq
C\|\Delta_{g_{\boldsymbol{t}}}
f\|_{W^p_{k-2,\boldsymbol{\beta}_{\boldsymbol{t}}-2}}.$$
Furthermore, the image of the restricted operator $\Delta_{g_{\bt}|E_{\bt}}$ coincides with the image of the full operator $\Delta_{g_{\bt}}$.
\item Assume $L$ has $k>1$ connected components. Then there exists a constant $C>0$ and, for each $\bt$ sufficiently small, a codimension $k$ subspace $E_{\bt}\subset W^p_{k,\bt}(L_{\bt})$ transverse to constants such that, for all $f\in E_{\bt}$,
$$\|f\|_{W^p_{k,\boldsymbol{\beta}_{\boldsymbol{t}}}}\leq
C\|\Delta_{g_{\boldsymbol{t}}}
f\|_{W^p_{k-2,\boldsymbol{\beta}_{\boldsymbol{t}}-2}}.$$
\end{enumerate}
\end{theorem}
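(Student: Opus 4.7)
I would adapt the proof of Theorem \ref{thm:sum_injective}. The new difficulty is that $\Delta_{g_{\bt}}$ on the compact manifold $L_{\bt}$ always has the constants $\R$ in its kernel; moreover, on the CS summand $(L,g,\boldsymbol{\beta})$ with $\boldsymbol{\beta}\in(2-m,0)$, $\Delta_g$ also has kernel $\R$ and image $\{h:\int_L h\,\vol_g=0\}$ (region A of Figure \ref{fig:2}), whereas $\Delta_{\hat{g}}$ on $\hat{L}$ remains a topological isomorphism (region A of Figure \ref{fig:1}). Let $C_g$ denote the invertibility constant of $\Delta_g$ restricted to zero-mean functions on $L$, and $C_{\hat{g}}$ that of $\Delta_{\hat{g}}$.

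\textbf{Part 1.} I would take
\begin{equation*}
E_{\bt}:=\left\{f\in W^p_{k,\bt}(L_{\bt}):\int_{L_{\bt}}f\,\vol_{g_{\bt}}=0\right\},
\end{equation*}
a closed codimension-one subspace. Since $E_{\bt}$ is transverse to $\R=\mathrm{Ker}(\Delta_{g_{\bt}})$, the decomposition \eqref{eq:decomp_Et} and the coincidence of images are immediate. Given $f\in E_{\bt}$, split $f=\eta_t f+(1-\eta_t)f$ with the cutoff $\eta_t$ from Theorem \ref{thm:sum_injective}. The piece $(1-\eta_t)f$ is supported on the $\hat{L}$-side and is estimated exactly as in that proof, using $C_{\hat{g}}$, Corollary \ref{cor:rescaledconifold} and Lemma \ref{l:neckestimate}. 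For $\eta_t f$, set $c_t:=\vol_g(L)^{-1}\int_L \eta_t f\,\vol_g$; then $\eta_t f-c_t$ has zero mean on $L$, so
\begin{equation*}
\|\eta_t f-c_t\|_{W^p_{k,\boldsymbol{\beta}}(L)}\leq C_g\,\|\Delta_g(\eta_t f)\|_{W^p_{k-2,\boldsymbol{\beta}-2}(L)},
\end{equation*}
and hence $\|\eta_t f\|_{W^p_{k,\boldsymbol{\beta}}(L)}\leq C_g\|\Delta_g(\eta_t f)\|+|c_t|\cdot\|\mathbf{1}\|_{W^p_{k,\boldsymbol{\beta}}(L)}$. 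To control $|c_t|$, observe that $g_{\bt}$ agrees with $g$ on $L\setminus S^*$, equals $t^2\hat{g}$ (so $\vol_{g_{\bt}}=t^m\vol_{\hat{g}}$) on $\hat{L}\setminus\hat{S}^*$, and is a scaled-perturbation of the conical metric on the neck; using the transversality $\int_{L_{\bt}}f\,\vol_{g_{\bt}}=0$ one gets
\begin{equation*}
\vol_g(L)\,c_t = -\int_{L_{\bt}}(1-\eta_t)f\,\vol_{g_{\bt}}+(\text{neck correction}).
\end{equation*}
Both terms on the right are supported where $\vol_{g_{\bt}}$ carries a positive power of $t$ (namely $t^m$ on the $\hat{L}$-side, and $r^m$ with $r\leq t^b$ on the inner neck), so Hölder in weighted spaces (Lemma \ref{l:weightedhoelder}) gives $|c_t|\leq\epsilon(\bt)\|f\|_{W^p_{k,\bt}}$ with $\epsilon(\bt)\to 0$. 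Combining the estimates and applying the $|\log t|$-trick of Theorem \ref{thm:sum_injective} yields
\begin{equation*}
\|f\|_{W^p_{k,\bt}}\leq (C_g+C_{\hat{g}})\|\Delta_{g_{\bt}}f\|_{W^p_{k-2,\bt-2}}+\bigl(C|\log t|^{-1}+\epsilon(\bt)\bigr)\|f\|_{W^p_{k,\bt}},
\end{equation*}
and the last term is absorbed on the left for $\bt$ small.

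\textbf{Part 2.} When $L=L^{(1)}\sqcup\cdots\sqcup L^{(k)}$, the kernel of $\Delta_g$ is $\R^k$ (one constant per component), so the ``approximate kernel'' we must quotient by is $k$-dimensional even though $\mathrm{Ker}(\Delta_{g_{\bt}})$ is still one-dimensional. I would define
\begin{equation*}
E_{\bt}:=\left\{f\in W^p_{k,\bt}(L_{\bt}):\int_{L^{(i)}\setminus S^*}f\,\vol_g=0,\ \ i=1,\dots,k\right\},
\end{equation*}
a codimension-$k$ closed subspace; it is transverse to $\R$ because a nonzero constant $c$ gives $c\cdot\vol_g(L^{(i)}\setminus S^*)\neq 0$ for each $i$. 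The argument of Part 1 then applies componentwise: on each $L^{(i)}$ subtract a constant $c_t^{(i)}$ from the restriction of $\eta_t f$ to make it zero-mean, invert $\Delta_g$ modulo constants on each component, and bound $|c_t^{(i)}|$ using the $i$-th transversality condition together with the $t$-smallness of $\vol_{g_{\bt}}$ on the connecting $\hat{L}$-necks.

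\textbf{Main obstacle.} The most delicate step throughout is the bound on the constants $c_t$ (respectively $c_t^{(i)}$). It requires carefully comparing $\int_L\eta_t f\,\vol_g$ with $\int_{L_{\bt}}\eta_t f\,\vol_{g_{\bt}}$ across the neck, then using the transversality to convert this to an integral of $(1-\eta_t)f$ and exploiting the $\bt$-smallness of $\vol_{g_{\bt}}$ on the support of $1-\eta_t$ so as to produce a factor $\epsilon(\bt)\to 0$. In Part 2, one must additionally check that the $k$ transversality conditions remain uniformly linearly independent as $\bt\to 0$; this should again follow because any ``cross-talk'' between distinct components $L^{(i)}$ travels through $\hat{L}$-necks of $\vol_{g_{\bt}}$-measure $O(t^m)$, which vanishes with $\bt$.
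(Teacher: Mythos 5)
Part 1 of your proposal is correct, though it takes a slightly different route from the paper. The paper fixes once and for all a complement $E$ of $\R$ in $W^p_{k,\boldsymbol{\beta}}(L)$ and defines $E_{\bt}:=P_{\bt}^{-1}(E)$, where $P_{\bt}f:=\eta_{\bt}f$; then $\eta_{\bt}f\in E$ by construction, the invertibility constant $C_g$ of $\Delta_g|_E$ applies directly to $\eta_{\bt}f$, and no mean-correction is needed (the only extra work is showing $\eta_{\bt}\to 1$ in $W^p_{k,\boldsymbol{\beta}}(L)$, so that $E_{\bt}$ excludes constants and has codimension one). Your choice of the $\vol_{g_{\bt}}$-mean-zero subspace forces the correction $c_t$, but your bound on it does go through: on the support of $\eta_t$ one has $g_{\bt}=\phi_i^*g$ (since $t^a\geq 2t^\tau$ for small $t$), so in fact $\int_L\eta_tf\,\vol_g=-\int_{L_{\bt}}(1-\eta_t)f\,\vol_{g_{\bt}}$ with no neck correction at all, the right-hand side is supported where $\rho_{\bt}\leq Ct^b$, and weighted H\"older with $\boldsymbol{\beta}>-m$ gives $|c_t|=O(t^{b(m+\boldsymbol{\beta})})\,\|f\|$. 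So Part 1 works, at the cost of more computation than the paper's preimage trick.

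Part 2 has a genuine gap. Your transversality conditions $\int_{L^{(i)}\setminus S^*}f\,\vol_g=0$ are imposed on the compact core of $L^{(i)}$, whereas the quantity you must kill in order to invert $\Delta_g$ on $L^{(i)}$ modulo constants is $c_t^{(i)}\propto\int_{L^{(i)}}\eta_tf\,\vol_g$. The discrepancy between the two is $\int_{\Sigma_i^*\times(0,\epsilon]}\eta_tf\,\vol_g$, an integral over the \emph{outer} part of the neck, $r\in[t^a,\epsilon]$, whose $g$-volume is bounded below independently of $\bt$. Consequently $|c_t^{(i)}|$ is only $O(\|f\|_{W^p_{k,\boldsymbol{\beta}_{\bt}}})$ with a constant that does not tend to zero: the ``$\bt$-smallness of $\vol_{g_{\bt}}$ on the connecting $\hat{L}$-necks'' is irrelevant here, because the offending region is the large-$r$ side of the neck, not the collapsing $\hat{L}$-side. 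The term $|c_t^{(i)}|\cdot\|\mathbf{1}\|_{W^p_{k,\boldsymbol{\beta}}(L^{(i)})}$ then cannot be absorbed and the argument collapses. (Your Part 1 escapes this only because the global mean-zero condition on $L_{\bt}$ pushes the entire discrepancy onto the support of $1-\eta_t$.) The repair is to let the transversality conditions depend on $\bt$: either impose $\int_{L^{(i)}}\eta_{\bt}f\,\vol_g=0$ directly, or, as the paper does, fix complements $E_i$ of $\R$ in $W^p_{k,\boldsymbol{\beta}}(L^{(i)})$ and set $E_{\bt}:=\{f:(\eta_{\bt}f)|_{L^{(i)}}\in E_i \mbox{ for all }i\}$, so that $c_t^{(i)}=0$ and the componentwise estimate proceeds as in Part 1; one then checks codimension $k$ and transversality to constants via $\eta_{\bt}\to 1$. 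Your closing worry about uniform linear independence of the $k$ conditions is not the issue — your functionals have disjoint supports — the problem is the one above.
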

\begin{proof}
Assume $L$ has one connected component. Choose any closed subspace $E\subset W^p_{k,\boldsymbol{\beta}}(L)$ such that 
\begin{equation*}
 W^p_{k,\boldsymbol{\beta}}(L)=E\oplus\R.
\end{equation*}
Define $\eta_{\bt}$ as in the proof of Theorem \ref{thm:sum_injective}. Extending it to zero on the CS ends of $L$, we can think of it as an element of $W^p_{k,\boldsymbol{\beta}}(L)$. One can check that $\eta_{\bt}\rightarrow 1$ in the $W^p_{k,\boldsymbol{\beta}}$ norm as $\bt\rightarrow 0$ so, for small $\bt$, $\eta_{\bt}\notin E$.
The multiplication map
\begin{equation*}
 P_{\bt}:W^p_{k,\boldsymbol{\beta}_{\bt}}(L_{\bt})\rightarrow W^p_{k,\boldsymbol{\beta}}(L),\ \ f\mapsto \eta_{\bt}f,
\end{equation*}
is linear and uniformly continuous with respect to the parameter $\bt$, so $E_{\bt}:=P_{\bt}^{-1}(E)$ is linear and closed. Since $\eta_{\bt}$ does not belong to $E$, constants do not belong to $E_{\bt}$. To confirm that $E_{\bt}$ has codimension 1, choose any linear function $Q:W^p_{k,\boldsymbol{\beta}}(L)\rightarrow\R$ such that $E=\mbox{Ker}(Q)$. Then $E_{\bt}=\mbox{Ker}(Q\circ P_{\bt})$, so it is defined by one linear condition. This proves Decomposition \ref{eq:decomp_Et}.

Consider $\Delta_{g_{\bt}}$ restricted to $E_{\bt}$. It is clearly injective. One can check that it is uniformly injective exactly as in Theorem \ref{thm:sum_injective}.

Now assume $L$ has multiple components $L_1,\dots,L_k$. For each $L_i$, choose a closed subspace $E_i\subset W^p_{k,\boldsymbol{\beta}}(L_i)$ as above. The multiplication map
\begin{equation*}
W^p_{k,\boldsymbol{\beta}_{\bt}}(L_{\bt})\rightarrow \bigoplus W^p_{k,\boldsymbol{\beta}_{\bt}}(L_i),\ \ f\mapsto \eta_{\bt}f,
\end{equation*}
is again linear and uniformly continuous, so we can define $E_{\bt}$ as the inverse of $E_1\oplus\dots,\oplus E_k$. One can again check that it has codimension $k$ and that, restricted to this space, $\Delta_{g_{\bt}}$ is uniformly injective.
\end{proof}
\begin{remark}
 Notice that, even though $L_{\bt}$ is smooth and compact, the proof of Theorem \ref{thm:cpt_sum_injective} requires the use of radius functions and weights on the necks.
\end{remark}


\section{Further Sobolev-type inequalities on conifold connect
sums}\label{s:improved_sob}
Given a conifold $(L,g)$, we can also apply the theory of Section
\ref{s:accs_analysis} to the elliptic operator
\begin{equation}\label{eq:D_g}
D_g=d\oplus d^*_g:W^p_{k,\boldsymbol{\beta}}(\Lambda^{even})\rightarrow
W^p_{k-1,\boldsymbol{\beta}-1}(\Lambda^{odd}),
\end{equation}
defined from the bundle of all even-dimensional forms on $L$ to the bundle of
all odd-dimensional forms. As for the Laplacian, it is possible to define and
study the exceptional weights for this operator. For any non-exceptional
weight $\boldsymbol{\beta}$, the operator $D_g$ of Equation \ref{eq:D_g} is
Fredholm. This implies that 
\begin{equation*}
 D_g:W^p_{k,\boldsymbol{\beta}}(\Lambda^{even})/\mbox{Ker}(D_g)\rightarrow
W^p_{k-1,\boldsymbol{\beta}-1}(\Lambda^{odd})
\end{equation*}
is a topological isomorphism onto its image. Notice that $W^p_{k,\boldsymbol{\beta}}(L)/\mbox{Ker}(D_g)$ is closed in $W^p_{k,\boldsymbol{\beta}}(\Lambda^{even})/\mbox{Ker}(D_g)$. It follows that
$d(W^p_{k,\boldsymbol{\beta}}(L))=D_g(W^p_{k,\boldsymbol{\beta}}(L))=D_g(W^p_{k,\boldsymbol{\beta}}(L)/\mbox{Ker}(D_g))$
is closed in $\mbox{Im}(D_g)$, thus in $W^p_{k-1,\boldsymbol{\beta}-1}(\Lambda^{odd})$. We can conclude that the restricted
operator
\begin{equation}\label{eq:d}
 d:W^p_{k,\boldsymbol{\beta}}(L)\rightarrow
W^p_{k-1,\boldsymbol{\beta}-1}(\Lambda^1)
\end{equation}
has closed image. Notice that $\mbox{Ker}(d)$ can only contain constants. If
the choice of weights is such that constants do not belong to the space
$W^p_{k,\boldsymbol{\beta}}(L)$, the operator $d$ of Equation \ref{eq:d} is a
topological isomorphism onto its image and can be inverted. In particular
there exists $C>0$ such that, for any $f\in W^p_{k,\boldsymbol{\beta}}(L)$,
\begin{equation*}
 \|f\|_{W^p_{k,\boldsymbol{\beta}}}\leq
C\|df\|_{W^p_{k-1,\boldsymbol{\beta}-1}}.
\end{equation*}
We now want to show that, on conifolds obtained as parametric connect sums, 
such $C$ can chosen independently of $\boldsymbol{t}$. For brevity, we restrict our attention to the non-compact case.

\begin{theorem}\label{thm:d_invertible}
Let $(\hat{L},\hat{g},\hat{\rho},\hat{\boldsymbol{\beta}},\hat{S}^*)$ be a weighted AC-marked conifold. Assume that  $\hat{\boldsymbol{\beta}}$ is non-exceptional for the operator
\begin{equation*}
 D_{\hat{g}}:W^p_{k,\hat{\boldsymbol{\beta}}}(\Lambda^{even})\rightarrow
W^p_{k-1,\hat{\boldsymbol{\beta}}-1}(\Lambda^{odd})
\end{equation*}
defined on the manifold $\hat{L}$ and that $\hat{\beta}_i<0$ for all ends $\hat{S}_i\in \hat{S}^*$.

Let $(L,g,\rho,\boldsymbol{\beta}, S^*)$ be a weighted CS-marked conifold. Assume $\boldsymbol{\beta}$ is non-exceptional for the operator
\begin{equation*}
D_g:W^p_{k,\boldsymbol{\beta}}(\Lambda^{even})\rightarrow
W^p_{k-1,\boldsymbol{\beta}-1}(\Lambda^{odd})
\end{equation*}
defined on the manifold $L$ and that each connected component of $L$ has at least one end, \textit{e.g.}
$S'$, satisfying the condition
\begin{equation*}
\left\{
\begin{array}{l}
\beta'<0 \mbox{ if $S'$ is AC}\\
\beta'>0 \mbox{ if $S'$ is CS}.
\end{array}
\right.
\end{equation*}
Now assume that $L,\hat{L}$ are compatible. Then, for all ends $S_i\in S^*$, 
 $\beta_i=\hat{\beta_i}<0$. This implies that $S'\in S^{**}$ so each connect sum $L_{\bt}$ has at least one end.

There exists $C>0$ such that, for all $f\in
W^p_{k,\boldsymbol{\beta}_t}(L_{\boldsymbol{t}})$, 
\begin{equation}\label{eq:d_invertible}
\|f\|_{W^p_{k,\boldsymbol{\beta}_{\boldsymbol{t}}}}\leq
C\|df\|_{W^p_{k-1,\boldsymbol{\beta}_{\boldsymbol{t}}-1}}.
\end{equation}
\end{theorem}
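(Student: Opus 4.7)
The plan is to follow the argument of Theorem \ref{thm:sum_injective} essentially verbatim, replacing $\Delta_g$ by $d$ throughout and adjusting for the fact that $d$ is first-order rather than second-order. As a preliminary step, I would produce the invertibility constants on the two pieces. Under our hypotheses, the weights $\boldsymbol{\beta}$ and $\hat{\boldsymbol{\beta}}$ are non-exceptional for $D_g$ and $D_{\hat g}$, so these Fredholm operators have closed image; restricting to functions, $d$ has closed image on each of $L$ and $\hat L$. The weight conditions (negativity on each AC end of $\hat L$, the distinguished-end sign condition on each connected component of $L$) ensure that constants do not lie in $W^p_{k,\hat{\boldsymbol{\beta}}}(\hat L)$ nor in $W^p_{k,\boldsymbol{\beta}}(L)$, so $d$ is injective. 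By the open mapping theorem there exist $C_g,\,C_{\hat g}>0$ such that
\begin{equation*}
\|f\|_{W^p_{k,\boldsymbol{\beta}}(g)}\le C_g\,\|df\|_{W^p_{k-1,\boldsymbol{\beta}-1}(g)},\qquad \|f\|_{W^p_{k,\hat{\boldsymbol{\beta}}}(\hat g)}\le C_{\hat g}\,\|df\|_{W^p_{k-1,\hat{\boldsymbol{\beta}}-1}(\hat g)}.
\end{equation*}

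For the main estimate I would reduce to a single parameter $t$ and introduce the cutoff $\eta_t$ with the same derivative bounds $|r^k \partial_r^k \eta_t|\le C_k/|\log t|$ used in Theorem \ref{thm:sum_injective}. Splitting $f=\eta_t f+(1-\eta_t)f$, the first piece is supported where $(g_{\bt},\rho_{\bt},\boldsymbol{\beta}_{\bt})=(g,\rho,\boldsymbol{\beta})$, so by the $L$-invertibility and the Leibniz rule $d(\eta_t f)=\eta_t\,df+d\eta_t\otimes f$,
\begin{equation*}
\|\eta_t f\|_{W^p_{k,\boldsymbol{\beta}_{\bt}}}\le C_g\left(\|df\|_{W^p_{k-1,\boldsymbol{\beta}_{\bt}-1}}+\frac{C}{|\log t|}\|f\|_{W^p_{k,\boldsymbol{\beta}_{\bt}}}\right).
\end{equation*}
For the second piece, supported on $(\hat L\setminus\hat S^*)\cup(\Sigma^*\times[t\hat R,t^b])$, Lemma \ref{l:neckestimate} ensures that $g_{\bt}$ is scaled-equivalent to $t^2\hat g$ and $\rho_{\bt}=t\hat\rho$ on this region, so by Corollary \ref{cor:rescaledconifold} and the $\hat g$-invertibility I would obtain
\begin{equation*}
\|(1-\eta_t)f\|_{W^p_{k,\boldsymbol{\beta}_{\bt}}(g_{\bt})}\le C_{\hat g}\,\|d((1-\eta_t)f)\|_{W^p_{k-1,\boldsymbol{\beta}_{\bt}-1}(g_{\bt})},
\end{equation*}
followed by the same Leibniz expansion to pick up another $C/|\log t|$ error term. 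Summing the two bounds and absorbing the $|\log t|^{-1}$ term into the left-hand side for $t$ small then yields the desired estimate.

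The one point that requires a genuine check, rather than a direct translation from Theorem \ref{thm:sum_injective}, is the compatibility of the rescaling with the first-order operator $d$. In the Laplacian case the pointwise identity $\Delta_{t^2 g}=t^{-2}\Delta_g$ matched the weight drop of $2$ exactly; here one must verify that $|df|_{t^2 g}=t^{-1}|df|_g$ combines with the weight drop of $1$ to produce the same scaling factor $t^{-\hat\beta_i}$ on both sides of the $\hat g$-invertibility. A direct computation with the definition of $\|\cdot\|_{W^p_{k,\beta}}$ confirms this, so $C_{\hat g}$ passes to the rescaled metric intact and the absorption argument closes. This is the only place where the structure of the proof differs substantively from its model, and it is where I would spend the most care.
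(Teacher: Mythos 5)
Your proposal is correct and follows the paper's proof essentially verbatim: the same cutoff decomposition borrowed from Theorem \ref{thm:sum_injective}, and the same key observation that the weight drop of $1$ for the first-order operator $d$ acting from functions to $1$-forms combines with $|df|_{t^2\hat g}=t^{-1}|df|_{\hat g}$ so that both sides of the $\hat g$-invertibility rescale by the same factor $t^{-\hat\beta_i}$. The direct computation you defer to is precisely the one the paper carries out with $\alpha_t=d((1-\eta_t)f)$, so nothing is missing.
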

\begin{proof}
As seen above, the assumptions prove that the operator $d$ is a topological
isomorphism (onto its image) between Sobolev spaces on both manifolds $L$,
$\hat{L}$. This means that there exist constants $C_g$, $C_{\hat{g}}$ satisfying
the analogue of Equation \ref{eq:d_invertible} on both manifolds separately. We
can use $C_g$, $C_{\hat{g}}$ to build $C$ satisfying Equation
\ref{eq:d_invertible} on $L_{\boldsymbol{t}}$ using the same ideas introduced
in the proof of Theorem \ref{thm:sum_injective}. There is only one difference,
as follows. In the proof of Theorem \ref{thm:sum_injective} we use the equality
\begin{equation*}
 t^{-\beta_i}C_{\hat{g}}
\|\Delta_{\hat{g}}((1-\eta_t)f)\|_{W^p_{k-2,\hat{\boldsymbol{\beta}}-2}(\hat{g},\hat{\rho})}=t^{2-\beta_i}C_{\hat{g}} \|\Delta_{t^2\hat{g}}((1-\eta_t)f)\|_ {
W^p_{k-2,\hat{\boldsymbol{\beta}}-2}(\hat{g},\hat{\rho}) }.
\end{equation*}
The factor $t^{2-\beta_i}$ is then cancelled by rescaling. In
particular, the above equality uses the fact that the
Laplacian depends on the metric and rescales in a specific
way.

In the case at hand the operator $d$ does not depend on the metric. However,
notice that it takes functions into 1-forms: it is this property that allows us
to conclude. Specifically, setting $\alpha_t=d((1-\eta_t)f)$ and assuming $\hat{\boldsymbol{\beta}}$ is constant to simplify the
notation, we find:
\begin{align*}
\|\alpha_t\|^p_{W^p_{k-1,\hat{\boldsymbol{\beta}}-1}(\hat{g},
\hat{\rho})}
&=\sum_j\int_{\hat{L}}|\hat{\rho}^{1-\hat{\boldsymbol{\beta}}+j}
\nabla^j\alpha_t|^p_{\hat{g}\otimes\hat{g}}\hat{\rho}^{-m}\mbox{vol}_{\hat{g}}\\
&=t^{p\hat{\boldsymbol{\beta}}}\sum_j\int_{\hat{L}}|(t\hat{\rho})^{1-\hat{\boldsymbol{\beta}}+j}
\nabla^j\alpha_t|^p_{t^2\hat{g}\otimes t^2\hat{g}}(t\hat{\rho})^{-m}\mbox{vol}_{t^2\hat{g}}\\
&=t^{p\hat{\boldsymbol{\beta}}}\|\alpha_t\|^p_{W^p_{k-1,\hat{\boldsymbol{\beta}}-1}(t^2\hat{g} ,
t\hat{\rho})}.
\end{align*}
The proof can now continue as for Theorem \ref{thm:sum_injective}.
\end{proof}

Combining Theorems \ref{thm:normstequivalent} and \ref{thm:d_invertible} we
obtain the following improvement of the weighted Sobolev Embedding Theorems,
Part 1, for parametric connect sums. 
\begin{corollary}\label{cor:improved_sob}
Let $(L,g,\rho,\boldsymbol{\beta}, S^*)$,
$(\hat{L},\hat{g},\hat{\rho},\hat{\boldsymbol{\beta}},\hat{S}^*)$ be marked
compatible conifolds as in Theorem \ref{thm:d_invertible}. Define
$L_{\boldsymbol{t}}$ as in Definition
\ref{def:tconnectsum}. Then there exists
$C>0$ such that, for all $1\leq p<m$, $\boldsymbol{t}$ and $f\in
W^p_{1,\boldsymbol{\beta}_{\boldsymbol{t}}}(L_{\boldsymbol{t
}} )$ ,
\begin{equation*}
\|f\|_{L^{p^*}_{\boldsymbol{\beta}_{\boldsymbol{t}}}}\leq C
\|df\|_{L^p_{\boldsymbol{\beta}_{\boldsymbol{t}}-1}}.
\end{equation*}
\end{corollary}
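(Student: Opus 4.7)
The plan is to derive Corollary \ref{cor:improved_sob} simply by composing two uniform estimates that are already in hand. First I would invoke Theorem \ref{thm:normstequivalent} in its simplest form, namely the weighted Sobolev embedding $W^p_{1,\boldsymbol{\beta}_{\boldsymbol{t}}}(L_{\boldsymbol{t}}) \hookrightarrow L^{p^*}_{\boldsymbol{\beta}_{\boldsymbol{t}}}(L_{\boldsymbol{t}})$ valid for $1 \le p < m$, to obtain a constant $C_1 > 0$ independent of $\boldsymbol{t}$ such that
\begin{equation*}
\|f\|_{L^{p^*}_{\boldsymbol{\beta}_{\boldsymbol{t}}}} \le C_1 \|f\|_{W^p_{1,\boldsymbol{\beta}_{\boldsymbol{t}}}} = C_1 \left( \|f\|_{L^p_{\boldsymbol{\beta}_{\boldsymbol{t}}}} + \|df\|_{L^p_{\boldsymbol{\beta}_{\boldsymbol{t}}-1}} \right),
\end{equation*}
using the fact that $\nabla f = df$ for functions.

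Next I would apply Theorem \ref{thm:d_invertible} with $k=1$, whose hypotheses are exactly the ones we are assuming through the phrase ``as in Theorem \ref{thm:d_invertible}''. This provides a second $\boldsymbol{t}$-uniform constant $C_2 > 0$ such that for every $f \in W^p_{1,\boldsymbol{\beta}_{\boldsymbol{t}}}(L_{\boldsymbol{t}})$,
\begin{equation*}
\|f\|_{W^p_{1,\boldsymbol{\beta}_{\boldsymbol{t}}}} \le C_2 \|df\|_{L^p_{\boldsymbol{\beta}_{\boldsymbol{t}}-1}}.
\end{equation*}
This is precisely the role played by the weight restrictions: they ensure that constants do not lie in $W^p_{1,\boldsymbol{\beta}_{\boldsymbol{t}}}$, so $d$ is injective, and the compatibility of weights across the connect sum together with the non-exceptionality assumptions yield the uniform invertibility. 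Chaining the two displayed inequalities gives the claim with $C := C_1 C_2$.

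There is essentially no technical obstacle here: the only thing to verify is that the two cited theorems genuinely produce constants independent of $\boldsymbol{t}$ under the joint hypotheses of the corollary, which is immediate since the hypotheses of Theorem \ref{thm:d_invertible} imply those of Theorem \ref{thm:normstequivalent} (compatible weighted marked conifolds), and Theorem \ref{thm:normstequivalent} holds without any further Fredholm or weight restrictions. Thus the ``improvement'' over the bare weighted Sobolev embedding is entirely supplied by Theorem \ref{thm:d_invertible}, which replaces control of $f$ in $L^p_{\boldsymbol{\beta}_{\boldsymbol{t}}}$ by control of $df$ in $L^p_{\boldsymbol{\beta}_{\boldsymbol{t}}-1}$.
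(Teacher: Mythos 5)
Your proposal is correct and is exactly the argument the paper intends: the corollary is introduced with the phrase ``Combining Theorems \ref{thm:normstequivalent} and \ref{thm:d_invertible}'', and the proof is precisely the composition of the $\boldsymbol{t}$-uniform embedding $W^p_{1,\boldsymbol{\beta}_{\boldsymbol{t}}}\hookrightarrow L^{p^*}_{\boldsymbol{\beta}_{\boldsymbol{t}}}$ with the $\boldsymbol{t}$-uniform Poincar\'e-type estimate of Theorem \ref{thm:d_invertible} at $k=1$. No further comment is needed.
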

\begin{remark}
 Following standard terminology in the
literature we can refer to Equation \ref{eq:d_invertible} as a ``uniform weighted Poincar\'{e} inequality'' and to Corollary \ref{cor:improved_sob} as a ``uniform
weighted
Gagliardo-Nirenberg-Sobolev inequality''. Alternatively, following \cite{hebey}
Chapter 8, the latter is a ``uniform weighted Euclidean-type Sobolev
inequality''.
\end{remark}


\bibliographystyle{amsplain}
\bibliography{accssldefs}

\providecommand{\bysame}{\leavevmode\hbox to3em{\hrulefill}\thinspace}
\providecommand{\MR}{\relax\ifhmode\unskip\space\fi MR }
\providecommand{\MRhref}[2]{%
  \href{http://www.ams.org/mathscinet-getitem?mr=#1}{#2}
}
\providecommand{\href}[2]{#2}
\begin{thebibliography}{10}

\bibitem{adams}
Robert~A. Adams, \emph{Sobolev spaces}, Academic Press, New York-London, 1975,
  Pure and Applied Mathematics, Vol. 65.

\bibitem{arezzopacard}
Claudio Arezzo and Frank Pacard, \emph{Complete, embedded, minimal
  {$n$}-dimensional submanifolds in {$\Bbb C\sp n$}}, Comm. Pure Appl. Math.
  \textbf{56} (2003), no.~3, 283--327.

\bibitem{butscher}
Adrian Butscher, \emph{Regularizing a singular special {L}agrangian variety},
  Comm. Anal. Geom. \textbf{12} (2004), no.~4, 733--791.

\bibitem{hebey}
Emmanuel Hebey, \emph{Nonlinear analysis on manifolds: {S}obolev spaces and
  inequalities}, Courant Lecture Notes in Mathematics, vol.~5, New York
  University Courant Institute of Mathematical Sciences, New York, 1999.

\bibitem{joyce:I}
Dominic Joyce, \emph{Special {L}agrangian submanifolds with isolated conical
  singularities. {I}. {R}egularity}, Ann. Global Anal. Geom. \textbf{25}
  (2004), no.~3, 201--251.

\bibitem{joyce:III}
\bysame, \emph{Special {L}agrangian submanifolds with isolated conical
  singularities. {III}. {D}esingularization, the unobstructed case}, Ann.
  Global Anal. Geom. \textbf{26} (2004), no.~1, 1--58.

\bibitem{joyce:IV}
\bysame, \emph{Special {L}agrangian submanifolds with isolated conical
  singularities. {IV}. {D}esingularization, obstructions and families}, Ann.
  Global Anal. Geom. \textbf{26} (2004), no.~2, 117--174.

\bibitem{joycesalur}
Dominic Joyce and Sema Salur, \emph{Deformations of asymptotically cylindrical
  coassociative submanifolds with fixed boundary}, Geom. Topol. \textbf{9}
  (2005), 1115--1146 (electronic).

\bibitem{lee}
Yng-Ing Lee, \emph{Embedded special {L}agrangian submanifolds in {C}alabi-{Y}au
  manifolds}, Comm. Anal. Geom. \textbf{11} (2003), no.~3, 391--423.

\bibitem{lockhart}
Robert Lockhart, \emph{Fredholm, {H}odge and {L}iouville theorems on noncompact
  manifolds}, Trans. Amer. Math. Soc. \textbf{301} (1987), no.~1, 1--35.

\bibitem{lockhartmcowen}
Robert Lockhart and Robert~C. McOwen, \emph{Elliptic differential operators on
  noncompact manifolds}, Ann. Scuola Norm. Sup. Pisa Cl. Sci. (4) \textbf{12}
  (1985), no.~3, 409--447.

\bibitem{lotay}
Jason~D. Lotay, \emph{Desingularization of coassociative 4-folds with conical
  singularities}, Geom. Funct. Anal. \textbf{18} (2009), no.~6, 2055--2100.

\bibitem{melrose}
Richard~B. Melrose, \emph{The {A}tiyah-{P}atodi-{S}inger index theorem},
  Research Notes in Mathematics, vol.~4, A K Peters Ltd., Wellesley, MA, 1993.

\bibitem{pacini:sldefs}
Tommaso Pacini, \emph{Special {L}agrangian conifolds, {I}: {M}oduli spaces},
  Proc. LMS (to appear).

\bibitem{pacini:slgluing}
\bysame, \emph{Special {L}agrangian conifolds, {II}: {G}luing constructions in
  {$\C^m$}}, Proc. LMS (to appear).

\end{thebibliography}
\end{document}